\newtheorem{theorem}{Theorem}[section]
\newtheorem{corollary}[theorem]{Corollary}
\newtheorem{lemma}[theorem]{Lemma}
\newtheorem{proposition}[theorem]{Proposition}
\theoremstyle{definition}
\newtheorem{definition}[theorem]{Definition}
\newtheorem{example}[theorem]{Example}
\newtheorem{remark}[theorem]{Remark}
\numberwithin{equation}{section}
\title[Hyperbolic Monge-Ampère Equation on a Cylinder]{The Hyperbolic Monge-Amp\`ere Equation on a Cylinder: Well-Posedness and Stability} \thanks{This paper is dedicated to Professor Peter Constantin on the occasion of his 75th birthday with gratitude for his generosity, mentorship, guidance, and support.}
\author[M. Deliyianni]{Maria Deliyianni}
\address[M. Deliyianni]{Department of Mathematics, University of Arizona, 617 N. Santa Rita Ave., Tucson, AZ 85721}
\email{{\tt mdeliy1@arizona.edu}}
\author[S. C. Venkataramani]{Shankar C. Venkataramani} 
\address[S. C. Venkataramani]{Department of Mathematics, University of Arizona, 617 N. Santa Rita Ave., Tucson, AZ 85721}
\email{\tt shankar@arizona.edu}
\keywords{Hyperbolic Monge--Amp\`ere equation, Cauchy-Goursat problem, Volterra equations, prescribed Gauss curvature}
\subjclass[2020]{35L70, 53A05, 35L20, 45D05, 35Q74}
\date{\today}
\begin{document}

\begin{abstract}
This paper develops a rigorous analytic framework for the hyperbolic Monge-Ampère equation on strip-like domains, which model wrinkled patterns in thin elastic sheets. Our work addresses the rigid side of the classical rigidity-flexibility dichotomy by defining this regime not by high smoothness, but by the more fundamental property of partial convexity. The hodograph transformation is the natural tool for this setting, as its validity is predicated on partial convexity. It converts the nonlinear Monge-Ampère equation into a linear damped wave equation, allowing us to formulate a well-posed Cauchy-Goursat problem. A key challenge is the corner singularity that arises where characteristic and non-characteristic boundary data meet. To resolve this, we develop a parametrix-corrector decomposition that captures the solution's inherent singular behavior. This method recasts the problem as a singular Volterra integral equation, for which we prove the existence and uniqueness of a new class of hodograph weak solutions. Finally, we derive energy estimates to establish the quantitative stability of these rigid solutions under perturbations of the underlying curvature function.
\end{abstract}

\maketitle

\tableofcontents

\section{Introduction}\label{sec:intro}

The intricate, frilly patterns of leaves, fungi, and marine invertebrates are striking examples of hyperbolic geometry in nature \cite{Sharon2004Leaves,sharon2007geometrically, Plant2}. 
These complex, multi-scale wrinkled shapes arise in thin elastic sheets when non-uniform growth or stress induces an intrinsic metric with negative Gaussian curvature \cite{sharon2007geometrically}. The mathematical description of these surfaces often leads to the Hyperbolic Monge--Ampère equation, the prescribed Gauss curvature equation for isometric immersions of surfaces with negative curvature \cite{poznyak1973isometric,stoker}. While central to differential geometry and nonlinear analysis, in our context this equation presents a fundamental puzzle--the {\em rigidity-flexibility dichotomy} \cite{Gemmer2016Isometric}.

On one hand, classical results in differential geometry establish that smooth ($C^2$) isometric immersions of hyperbolic surfaces are rigid \cite{efimov1964generation}; isometric immersions of large domains necessarily develop large curvatures and incur a prohibitive energetic cost. This mathematical rigidity seems to contradict the existence of the floppy, intricately wrinkled structures seen in nature \cite{Sharon2004Leaves,Yamamoto2021Naturesforms}. Our recent work has shed some light on this issue by considering a ``flexible'' regime of less-smooth surfaces \cite{Gemmer2016Isometric,gemmer2013transitions, Shearman2021Distributed, Yamamoto2021Naturesforms}. By introducing topological defects known as {\em branch points}, surfaces of class $C^{1,1}$ can form complex, lower-energy wrinkling patterns, providing an avenue for flexibility that nature appears to exploit \cite{Yamamoto2021Naturesforms}.

Our work is just one aspect of the ``flexible" side of the dichotomy. Flexibility itself has been revealed to be a spectrum of behaviors that depends on the precise regularity of the surface. At one extreme lies the foundational Nash-Kuiper theorem on $C^1$ isometric embeddings~\cite{ Kuiper1955C1,Nash1954C1}, which established a paradigm of maximal flexibility by constructing an abundance of highly wrinkled solutions to the isometric immersion problem. The modern exploration of this spectrum, particularly on the intermediate $C^{1,\alpha}$ H\"older scale, has been driven by the theory of {\em convex integration} \cite{Eliashberg2002book,Gromov1986book,Spring2010book}. This framework has been successfully applied to the isometric immersion problem and the associated Monge–Ampère system by a community of researchers including Borisov~\cite{Borisov2004irregular}, Conti, DeLellis and Sz\'ekelyhidi \cite{Conti2012h-principle}, and, Lewicka and Pakzad~\cite{ LewickaMonge2,LewickaMonge3, LewickaMonge1}, leading to the construction of families of ``wild" $C^{1,\alpha}$ isometries, that are candidates for weak solutions of the associated Monge--Amp\'ere system \cite{Cao1,Cao2}.

Within this spectrum of flexibility, our prior work has identified the $C^{1,1}$ class as particularly relevant to physical systems. Unlike the less regular $C^{1,\alpha}$ solutions ($\alpha<1$), $C^{1,1}$ surfaces possess a well-defined bending energy \cite{lewicka2011scaling}, providing a potential energetic selection principle for the specific patterns observed in nature. Just as $C^{1,1}$ surfaces are, in a sense, ``borderline" flexible, our work in this paper looks at the complementary problem of characterizing hyperbolic surfaces that are borderline rigid. We focus on the rigid regime by analyzing the structure and stability of {\em partially convex} solutions, where flexibility is suppressed. The manifestation of rigidity in highly smooth ($C^3$) solutions is a transport law that dictates that the wrinkling wavelength cannot change in the transverse direction. This law rules out the possibility of multi-generational sub-wrinkling and implies that a curvature component must maintain a definite sign between lines of inflection. We interpret this transport law not as an impediment, but as the very definition of the rigid solutions we seek to construct. The central goal of this paper is to characterize this regime by replacing the overly restrictive $C^3$ smoothness condition with the more fundamental property that defines rigidity in this setting: {\em partial convexity}. Classical $C^3$ solutions exhibit a rigid structure composed of regions of partial convexity (where $w_{xx}$ has a definite sign), separated by non-bifurcating lines of inflection. Our approach is designed to formalize and extend this picture, capturing a broader class of solutions that retain this fundamental geometric character: a mosaic of partially convex patches glued together along well-behaved interfaces.

To analyze this structure, we study periodic solutions on a cylindrical domain $S^1 \times I$, where $I \subseteq \mathbb{R}$ is an interval (possibly unbounded), $x$ is a coordinate along the compact periodic direction (the fibers) and the transverse coordinate, $y$, acts as a time-like variable. The problem can thus be viewed as a ``1+1" dimensional hyperbolic system describing the evolution of periodic profiles in ``time.'' In this context, the {\em hodograph transformation}, which exchanges the spatial variable $x \in S^1$ with its corresponding slope $p = w_x$, is the natural analytical tool, as its validity is predicated on the very condition of partial convexity ($w_{xx} \neq 0$). This transformation converts the nonlinear Monge--Ampère equation,
\begin{equation}\label{eq:hma}
    w_{xx}w_{yy} - w_{xy}^2 = -\lambda(y)^2,
\end{equation}
into a linear damped wave equation \cite{CourantHilbert1962}. This structure allows us to define our central object of study: {\em hodograph weak solutions}. This notion, which is based on an integral identity in the transformed $(p,y)$  hodograph plane, is distinct from the classical Alexandrov theory for elliptic problems \cite{Alexandrov}. It is specifically adapted to the symmetries of our problem and is designed to capture precisely the class of rigid solutions built from partially convex patches.

This framework leads naturally to the formulation of a {\em Cauchy--Goursat problem} \cite{CourantHilbert1962}. However, a key difficulty arises from the geometry of this problem,  the {\em corner singularity} where non-characteristic Cauchy data meets characteristic Goursat data. As we argue below, standard notions of $C^k$ compatibility for the boundary data are insufficient. Indeed, even a solution that is perfectly smooth in the original $(x, y)$ variables will naturally generate inverse square-root singularities in its derivatives on the boundary of the hodograph domain, corresponding to cusps. We therefore introduce a more suitable notion of {\em weak compatibility}, which correctly captures the minimal regularity required for a well-posed Cauchy-Goursat problem.

Our primary contribution is the resolution of this singular problem, which we analyze in two distinct settings. For a finite cylinder, the problem is regularized and shown to be equivalent to a standard Volterra integral equation with a bounded kernel, for which well-posedness follows from classical theory \cite{TricomibookIntegralEquations}. The more challenging case is the semi-infinite cylinder, which corresponds to a pure Goursat problem with discontinuous boundary data at the origin. Herein lies the main technical novelty of our work: the development of a custom parametrix-corrector decomposition. The parametrix plays a dual role. First, it is constructed to account for the discontinuity in the Goursat data, reducing the problem to a singular Volterra equation for a more regular corrector term. The kernel of this equation exhibits a homogeneity of $-1$; consequently, the associated integral operator is not immediately seen as compact \cite{Kress2014} on standard function spaces like H\"older or Sobolev spaces (the Hilbert transform being a classic example), meaning well-posedness requires a bespoke argument exploiting the specific structure of the kernel. Second, the parametrix provides the correct analytical structure for a higher-regularity theory. By explicitly subtracting the inherent singular behavior, it allows one to formulate meaningful compatibility conditions on the \textit{corrector} rather than on the full solution. Finally, we construct an energy framework to derive quantitative stability estimates, demonstrating the continuous dependence of solutions on the underlying curvature function $\lambda(y)$. 

The paper is organized as follows. In Section \ref{sec:problem_and_rigidity}, we define the geometric setting of the problem and derive the transport law that demonstrates the inherent rigidity of classical solutions. Section \ref{sec:hodograph} introduces our primary analytical tool, the hodograph transformation, which linearizes the Monge-Ampère equation. In Section \ref{sec:symmetry}, we leverage solution symmetries to reduce the problem to a fundamental domain and formally set up the Cauchy-Goursat problem. The well-posedness of this problem is the focus of Section \ref{sec:cauchy-goursat}, where we establish existence for the finite cylinder before developing a parametrix-corrector decomposition to resolve the boundary singularity that arises in the more challenging semi-infinite cylinder case in Section~\ref{sec:singularities}. In Section \ref{sec:estimates}, we construct an energy framework to prove quantitative stability estimates. We conclude in Section \ref{sec:future} with a summary and directions for future work.

\section{Problem Formulation and Rigidity}
\label{sec:problem_and_rigidity}

\subsection{Geometric Setting and Problem Formulation}
\label{subsec:geometric_setting}
To illustrate the geometry we will analyze, we first present a simple, explicit example of a surface with negative curvature that flattens at infinity. Consider the immersion given by a graph $(x,y) \mapsto (x,y,w(x,y))$ where
\begin{equation*}
    w(x,y) = e^{-y}\cos(x),
\end{equation*}
and whose Gaussian curvature is
\begin{equation*}
    K(y) = \frac{- e^{-2y}}{(1 + e^{-2y})^2}.
\end{equation*}
The curvature $K(y)$ depends only on the transverse variable $y$, highlighting a translation invariance in the $x$-direction. Furthermore, $K(y)$ is strictly negative and decays to zero as $y \to \infty$, so the surface is negatively curved but becomes asymptotically flat. This example isolates the two key features that guide our analysis: localized negative curvature and a strip geometry where curvature depends only on the transverse variable and thus has a translation invariance ``along the strip".

The above discussion motivates the general class of strip-like geometries we study. Metrics of the form
\[
    ds^2 = (1+f(y))^2\,dx^2 + dy^2
\]
capture these same features. The Gaussian curvature for this metric is $K(y) = -f''(y)/(1+f(y))$. In the small deformation limit where $|f| \ll 1$, this simplifies to $K \approx -f''(y)$. We thus define the curvature via
\[
    K(y) = -\lambda(y)^2 \quad \text{with } \quad \lambda(y)^2 := f''(y),
\]
referring to $\lambda$ as the \emph{curvature function}. With this notation, the immersion function $w(x,y)$ satisfies the hyperbolic Monge--Ampère equation \eqref{eq:hma}. We focus on solutions that are periodic in the $x$-direction, which naturally leads to considering domains that are topologically cylinders, such as $S^1 \times [l,L]$ or $S^1 \times [0,\infty)$. For the stability estimates developed later in the paper, we will assume a controlled decay of the curvature, for example, by requiring that the logarithmic derivative $\lambda_y/\lambda \leq -m_0 < 0$, remains negative and bounded away from zero for large $y$.

\subsection{Classical Solutions and Rigidity}
\label{subsec:classical_rigidity}
Before introducing the hodograph framework, it is important to understand the rigidity imposed by the Monge--Ampère equation on classical solutions. A natural starting point is to seek $C^3$ (or smoother) solutions $w(x,y)$. However, the structure of the equation imposes a strong constraint that prevents refinement of oscillatory behavior, ruling out physically observed phenomena \cite{Sharon2004Leaves,sharon2002buckling, sharon2007geometrically}. This rigidity can be seen by differentiating the Monge--Ampère equation \eqref{eq:hma} with respect to $x$, which yields the conservation law:
\begin{equation}
    w_{xxx}w_{yy} + w_{xx}w_{xyy} -2w_{xxy}w_{xy}  = 0.
    \label{eq:xvar}
\end{equation}
This equation implies a remarkable property: the level sets of the curvature component $w_{xx}$ are transported along a vector field that is transverse to the fibers $y = \text{constant}$. To see this, define a vector field $\mathbf{v}$ by
\[
\mathbf{v} = -\frac{w_{xy} w_{yy}}{2 \lambda(y)^2} \frac{\partial}{\partial x} + \frac{\partial}{\partial y}.
\]
A direct computation of $\mathbf{v}\cdot \nabla w_{xx}$, in conjunction with \eqref{eq:xvar}, yields
\begin{align}
\mathbf{v} \cdot \nabla(w_{xx}) & = \left( \frac{w_{xy}w_{xyy} - 2w_{yy}w_{xxy}}{2\lambda(y)^2} \right) w_{xx}.
\label{eq:curvature_transport}
\end{align}
This demonstrates that $\mathbf{v}\cdot \nabla(w_{xx}) = \alpha(x,y) w_{xx}$ for some function $\alpha(x,y)$. In particular, the sign of $w_{xx}$ does not change along the orbits of $\mathbf{v}$. This transport law reveals an inherent rigidity, which precludes multi-generational wrinkling and ensures that partial convexity is preserved as the solution evolves in the time-like variable $y$. This motivates the development of an analytical framework specifically designed to construct and characterize solutions within the rigid regime.

\section{The Hodograph Formulation}
\label{sec:hodograph}

The rigidity of classical solutions, as demonstrated by the transport law \eqref{eq:curvature_transport} in the physical $(x, y)$ coordinates, motivates a change of framework. To analyze the structure of partially convex solutions, we employ the {\em hodograph transformation}. This is a classic technique for linearizing certain nonlinear hyperbolic systems by interchanging dependent and independent variables. Examples include its application to {\em gas dynamics} and to the {\em Born-Infeld equation} from nonlinear electrodynamics, both of which are treated in detail by Whitham~\cite{whitham1974linear} (Chapters 6 and 17, respectively). In our setting, this transformation similarly exchanges $x$ (corresponding to the compact `spatial' direction)  with slope variables $p = w_x$, revealing the underlying linear nature of the Monge–Ampère equation.

The analysis is simplified by reparametrizing the transverse coordinate interpreted as time. We introduce the new variable $u$ defined in terms of the curvature function $\lambda(y)$:
\begin{equation}\label{eq:defofu}
    u(y) = \int_y^\infty \lambda(\eta)\, d\eta.
\end{equation}
In the case of an infinite cylinder, the far-field limit $y \to \infty$ corresponds to $u \to 0$. For convenience, we will often treat $\lambda$ as a function of $u$, denoting the composition $\lambda(y(u))$ simply as $\lambda(u)$. To avoid ambiguity in what follows, we adopt the convention that the prime notation refers exclusively to differentiation with respect to $u$, so that 
$$
\lambda' \equiv \frac{d\lambda}{du} = -\frac{1}{\lambda} \frac{d \lambda}{dy}  = - \frac{d}{dy} (\log \lambda).
$$ 

\subsection{Exterior Differential System and Characteristics}
\label{subsec:eds}

The language of Exterior Differential Systems (EDS) provides a powerful geometric approach to analyzing PDEs, where the equation is recast as a system of differential forms on a jet bundle~\cite{EDS, Ivey2003book}. Fundamental results in this area, such as the Cartan--Kähler theorem, constitute a ``formal theory" that can guarantee the existence of local, analytic solutions under certain involutivity conditions \cite{Baker2002jetbundles}. However, this formal theory does not by itself address the global existence and uniqueness for a specific boundary value problem like the one we study. Our aim here is therefore not to apply the full Cartan--Kähler machinery, but to use the geometric insight from the EDS framework to derive an equivalent linear PDE. This linear equation is the key to the rigorous, global analysis that follows.

To formulate the hyperbolic Monge--Amp\'ere equation as an EDS, we consider the first jet bundle, $J^1(\Omega)$, over the domain $\Omega \subset \mathbb{R}^2$. This space formalizes the geometry by including not only the base coordinates but also the function's value and its first derivatives. A point in $J^1(\Omega)$ is given by the coordinates $(x, y, w, p, q)$, which correspond to the location $(x, y) \in \Omega$, the value of the immersion $w: \Omega \to \mathbb{R}$, and the partial derivatives $p = w_x$ and $q = w_y$.
The EDS for the Monge--Ampère equation is generated by the contact 1-form $\theta$ and the 2-form $\omega$ \cite{Ivey2003book}:
\begin{align}
    \theta &= dw - p\,dx - q\,dy \nonumber\\
    \omega &= dp \wedge dq + \lambda^2 dx \wedge dy.
    \label{eq:MA-EDS}
\end{align}
To adapt the system to characteristic variables, we recall from \eqref{eq:defofu} that the differential relationship is $du = -\lambda\,dy$. With this change of variables, the forms can be combined into the following identity, which reveals the characteristic structure of the system:
\[
\omega \pm \lambda(u) d\theta = (dp \mp du) \wedge (dq \pm \lambda(u)\, dx).
\]

This factorization shows that the original system yields two characteristic subsystems \cite{Ivey2003book}. To make this structure explicit, we introduce the characteristic coordinates $s = \tfrac{1}{2}(u+p)$ and $t = \tfrac{1}{2}(u-p)$. In these variables, the subsystems are defined by the Pfaffian equations with $\lambda(u) = \lambda(s+t)$:
\begin{align}
 dt &= 0, \quad dq - \lambda(u)\,dx = 0, \label{csystem1}\\[0.5em]
 ds &= 0, \quad dq + \lambda(u)\,dx = 0.\label{csystem2}
\end{align}

From these relations, we can extract a more compact formulation that will be central in what follows. Specifically, along any path we have 
\[
dq = \lambda(u) (\mu ds - \nu dt), \quad dx = \mu ds + \nu dt,
\]
where we define $\mu = \tfrac{\partial x}{\partial s} = x_s$ and $\nu = \tfrac{\partial x}{\partial t} = x_t.$ Since $q$ is a function of $s$ and $t$, its exterior derivative must vanish, namely $d(dq)=0$. This implies
\[
d\!\left( \lambda(u) \left( \frac{\partial x}{\partial s}\, ds - \frac{\partial x}{\partial t}\, dt \right) \right) = 0.
\]

Thus, solving the Monge--Amp\`ere equation becomes equivalent to finding a function $x(s,t)$ such that the 1-form 
\begin{equation}\label{eq:one_form_alpha}
\alpha = \lambda(s+t) (x_s\, ds - x_t\, dt)
\end{equation}
is exact ($d\alpha=0$). 
 By Stokes' theorem, this is equivalent to requiring that $\oint_C \alpha = 0$ for any closed contour $C$.

\subsection{The Linear PDE from the Hodograph Transformation}
\label{subsec:linear_pde}
For the 1-form $\alpha = P\,ds + Q\,dt$ to be exact, its components must satisfy the compatibility condition
\[
\frac{\partial Q}{\partial s} = \frac{\partial P}{\partial t}.
\]
From \eqref{eq:one_form_alpha} we have $P = \lambda x_s$ and $Q = -\lambda x_t$. Denoting $\lambda=\lambda(s+t)$ and $\lambda' = \tfrac{d\lambda}{d(s+t)}$, we compute:
\begin{align*}
    \frac{\partial P}{\partial t} &= \frac{\partial}{\partial t}\big(\lambda x_s\big) 
    = \lambda' x_s + \lambda x_{st}, \\[0.5em]
    \frac{\partial Q}{\partial s} &= \frac{\partial}{\partial s}\big(-\lambda x_t\big) 
    = -\lambda' x_t - \lambda x_{st}.
\end{align*}

Equating the two derivatives gives
\begin{equation}\label{eq:LinearMA}
-\lambda' x_t - \lambda x_{st} = \lambda' x_s + \lambda x_{st}.
\end{equation}
Let us introduce the linear operator 
\begin{equation}\label{eq:LinearOperator}
L[x] := 2\lambda\,x_{st} + \lambda'(x_s + x_t).
\end{equation}
Rearranging terms in \eqref{eq:LinearMA} leads to the linear PDE for $x(s,t)$:
\begin{equation}
L[x] = 0.
\label{eq:linear_pde_st}
\end{equation}

If $\lambda \neq 0$, we can also write this as:
\begin{equation}
\label{wave-equation}
x_{st} + \frac{\lambda'}{2\lambda} (x_s + x_t) = 0
\end{equation}
This PDE describes
functions $x = x(s,t)$ satisfying the requirement that the one form  $\alpha$ is exact.

\subsection{Non-integrability and the Goursat Problem}
\label{subsec:non_integrable}
At first glance, one might expect the situation to parallel the wave equation: once the characteristic structure is revealed, one could integrate along characteristics to obtain explicit formulas for the solution, as in d'Alembert’s formula. This would correspond to the system being {\em Darboux integrable} \cite{Ivey2003book}, meaning that the characteristic equations admit Riemann invariants constant along characteristics. However, for the Monge--Amp\`ere equation~\eqref{eq:hma}, this is generally not the case, as the following lemma shows.  

\begin{lemma}
The characteristic subsystems \eqref{csystem1} and \eqref{csystem2} are Darboux integrable if and only if the function $\lambda(u)$ is constant.
\end{lemma}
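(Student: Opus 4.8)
The plan is to identify ``Darboux integrable'' with complete (Frobenius) integrability of the characteristic Pfaffian systems and to show that this holds precisely when $\lambda'\equiv 0$. Concretely, I would work on the four-dimensional manifold $N$ with coordinates $(s,t,x,q)$ (equivalently $(u,p,x,q)$; the value $w$ is then recovered by a quadrature from $dw=p\,dx+q\,dy$ and may be suppressed), on which \eqref{csystem1} and \eqref{csystem2} generate the rank-two Pfaffian systems $\mathcal{I}_1=\{dt,\ dq-\lambda\,dx\}$ and $\mathcal{I}_2=\{ds,\ dq+\lambda\,dx\}$, with $\lambda=\lambda(s+t)$. A \emph{Riemann invariant} for $\mathcal{I}_1$ is a first integral $r$ with $dr\in\mathcal{I}_1$, and the requirement that the characteristic subsystem ``admit Riemann invariants constant along characteristics'' is that $\mathcal{I}_1$ admit a functionally independent pair of such $r$; by the Frobenius theorem this is equivalent to $\mathcal{I}_1$ being closed under $d$ modulo itself, and likewise for $\mathcal{I}_2$.

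For the direction $\lambda$ constant $\Rightarrow$ Darboux integrable: when $\lambda'\equiv 0$ the generators are exact, $dq\mp\lambda\,dx=d(q\mp\lambda x)$, so $\mathcal{I}_1=\mathrm{span}\{dt,\ d(q-\lambda x)\}$ and $\mathcal{I}_2=\mathrm{span}\{ds,\ d(q+\lambda x)\}$ are differentially closed and hence completely integrable, with Riemann invariants $\{t,\ q-\lambda x\}$ and $\{s,\ q+\lambda x\}$; these reproduce the expected d'Alembert-type solution $x=\tfrac{1}{2\lambda}\big(G(s)-F(t)\big)$, $q=\tfrac12\big(G(s)+F(t)\big)$ along characteristics.

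For the converse I would compute the integrability obstruction directly. Since $\lambda=\lambda(s+t)$ we have $d\lambda=\lambda'\,(ds+dt)$, hence $d(dq-\lambda\,dx)=-d\lambda\wedge dx=-\lambda'\,ds\wedge dx-\lambda'\,dt\wedge dx$. If $r$ were a Riemann invariant of $\mathcal{I}_1$, write $dr=a\,dt+b\,(dq-\lambda\,dx)$; applying $d$ and wedging the resulting identity with $dt\wedge(dq-\lambda\,dx)$ annihilates the $da\wedge dt$ and $db\wedge(dq-\lambda\,dx)$ contributions and leaves $-\,b\,\lambda'\,(ds+dt)\wedge dx\wedge dt\wedge(dq-\lambda\,dx)=-\,b\,\lambda'\,ds\wedge dx\wedge dt\wedge dq=0$. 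Since $ds\wedge dx\wedge dt\wedge dq$ is, up to sign, the volume form on $N$, this forces $b\,\lambda'\equiv 0$; so wherever $\lambda'\neq 0$ one gets $b=0$ and $dr=a\,dt$, i.e.\ $r$ is a function of $t$ alone and no independent invariant exists. The symmetric computation rules out $\mathcal{I}_2$. Finally, $u$ is strictly monotone in $y$ by \eqref{eq:defofu}, so $\lambda'\equiv 0$ is equivalent to $\lambda$ being constant, which closes the equivalence.

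The step I expect to be the main obstacle is conceptual rather than computational: one must settle on the correct, unprolonged notion of Darboux integrability appropriate here, and then certify that the residual two-form $\lambda'\,ds\wedge dx$ genuinely lies outside the algebraic ideal generated by $\mathcal{I}_1$ — which is exactly what wedging with a complementary two-form accomplishes, by producing a nonzero multiple of the volume form. An equivalent shortcut, worth recording, is to read the obstruction off the linearized equation \eqref{wave-equation}: its characteristic coupling coefficient $\tfrac{\lambda'}{2\lambda}$ is the sole impediment to reducing it to the d'Alembert form $x_{st}=0$ (whose Riemann invariants are simply $x_s$ and $x_t$), and this coefficient vanishes identically if and only if $\lambda$ is constant.
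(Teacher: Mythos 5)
Your proposal is correct and follows essentially the same route as the paper: both reduce Darboux integrability to the Frobenius condition on each characteristic Pfaffian system and detect the obstruction by computing $d(dq\pm\lambda\,dx)=\pm\lambda'(ds+dt)\wedge dx$ and wedging against the generators to produce $\lambda'$ times the volume form $ds\wedge dt\wedge dx\wedge dq$. Your version merely spells out the two directions a bit more explicitly (exact generators and explicit Riemann invariants when $\lambda'\equiv 0$; the ansatz $dr=a\,dt+b\,(dq-\lambda\,dx)$ forcing $b\lambda'\equiv 0$ in the converse), which is a welcome but not essentially different elaboration of the paper's argument.
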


\begin{proof}
For the system to be Darboux integrable, each characteristic subsystem should be Frobenius, i.e., for $i=1,2$, we should not get any additional conditions, beyond $\beta_i=0, \zeta_i = 0$ by prolongation (differentiation) \cite{EDS}. In other words $d \beta_i$ and $d \zeta_i$ should be in the ideal generated by $\beta_i$ and $\zeta_i$. Let us check this for subsystem 2. We have
\begin{align*}
d\beta_2 &= d(ds) = 0, \\[0.5mm]
d\zeta_2 &= d(dq + \lambda(u)\,dx) = d\lambda(u) \wedge dx = \lambda'(u)\,du \wedge dx.
\end{align*}
It is clear that $d\beta_2$ lies in the ideal. For $d\zeta_2$, Frobenius integrability requires
$d\zeta_2 \wedge \beta_2 \wedge \zeta_2 = 0.$ We compute:
\begin{align*}
d\zeta_2 \wedge \beta_2 \wedge \zeta_2 
&= \big(\lambda'(u)\,du \wedge dx\big) \wedge ds \wedge \big(dq + \lambda(u)\,dx\big) \\[0.5em]
&= \lambda'(u)\,du \wedge dx \wedge ds \wedge dq.
\end{align*}
Since $u = s+t$, we have $du = ds+dt$, and hence the above becomes
\[
\lambda'(s+t)\,(ds+dt) \wedge dx \wedge ds \wedge dq 
= \lambda'(s+t)\,dt \wedge dx \wedge ds \wedge dq.
\]
This $4$-form vanishes if and only if $\lambda'(u)\equiv 0$, i.e.\ $\lambda(u)$ is constant. Otherwise, the subsystem \eqref{csystem2} is not involutive and thus not Darboux integrable. A similar calculation applies to~\eqref{csystem1}. Therefore the Monge--Amp\`ere system \eqref{eq:MA-EDS} is neither Darboux integrable nor even semi-integrable.
\end{proof}

The failure of Darboux integrability confirms that no simple d’Alembert-type formula is available for the general case. We therefore turn from seeking explicit formulas to analyzing a well-posed boundary value problem. For a hyperbolic PDE expressed in characteristic coordinates, such as Equation~\eqref{wave-equation}, the {\em Goursat problem}, where data is prescribed on two intersecting characteristic curves, provides a fundamental and well-posed framework. This formulation is particularly well-suited for our analysis because, as we will show, it can be converted directly into a Volterra integral equation. This integral representation will become the central tool for constructing and analyzing solutions in the sections that follow.

\section{Symmetric Solutions and Problem Formulation}
\label{sec:symmetry}
While the hodograph formulation linearizes the Monge–Ampère equation, it applies only on domains where the solution is partially convex ($w_{xx} \ne 0$). 
For periodic solutions, it is of course impossible to have $w_{xx} \neq 0$ everywhere on a compact fiber $y =$ constant. Consequently, the domains of partial convexity appear as patches bounded by lines of inflection $w_{xx} =0$. 
To construct a global solution, one must analyze the solution on a fundamental patch and understand how to glue it to its neighbors. 
The properties of this gluing are determined by the symmetries of the solution.

In this section, we show how symmetry provides the geometric framework for formulating the problem. Certain exact solutions exhibit strong reflectional and translational symmetries. These symmetries fix the location of inflection lines and determine how adjacent solution patches can be joined. By analyzing this class of symmetric solutions, we obtain the natural boundary configuration that motivates the Cauchy--Goursat formulation studied in the next section.

\subsection{Symmetries and Inflection Lines} 
We are motivated by the structure of the exact product solution $w(x,y) = e^{-y}\cos(x)$ of \eqref{eq:hma}. 
This example suggests looking at a class of solutions $w(x,y)$ that share its key symmetries:
\begin{enumerate}
    \item \textbf{Even under reflection about $x=0$:} The solution is even in $x$,
    \[
    w(-x,y) = w(x,y).
    \]
    \item \textbf{Odd under translation by a half-period:} The solution is anti-symmetric under a shift by half its period, which we take to be $\pi$,
    \[
    w(x+\pi,y) = -w(x,y).
    \]
\end{enumerate}
The second condition implies periodicity with period $2\pi$. 
Taken together, these symmetries fix the location of the inflection lines.

\begin{lemma}
If $w(x,y)$ is $C^2$ and satisfies the two symmetry conditions above, then it has lines of inflection at $x = \pm \tfrac{\pi}{2}$.
\end{lemma}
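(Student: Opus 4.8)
The plan is to show that $w_{xx}$ vanishes identically on the lines $x = \pm\tfrac{\pi}{2}$, which is precisely what it means for these to be lines of inflection. The idea is to combine the two symmetry conditions to produce a reflection symmetry whose fixed-point set is exactly $\{x = \pm\tfrac{\pi}{2}\}$, and then observe that $w_{xx}$, as a second $x$-derivative, must be even under that reflection while $w$ itself is odd.

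First I would compose the two given symmetries. The half-period anti-symmetry $w(x+\pi,y) = -w(x,y)$ together with evenness $w(-x,y) = w(x,y)$ gives, after the substitution $x \mapsto \tfrac{\pi}{2} + \xi$,
\begin{align*}
w\!\left(\tfrac{\pi}{2} + \xi, y\right) &= -\,w\!\left(\xi - \tfrac{\pi}{2}, y\right) \\
&= -\,w\!\left(\tfrac{\pi}{2} - \xi, y\right),
\end{align*}
where the first equality uses the half-period shift (writing $\tfrac{\pi}{2}+\xi = (\xi-\tfrac{\pi}{2})+\pi$) and the second uses evenness in the form $w(\xi - \tfrac{\pi}{2}, y) = w(\tfrac{\pi}{2} - \xi, y)$. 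Hence $w$ is \emph{odd} under reflection about the line $x = \tfrac{\pi}{2}$. Next I would differentiate this identity twice in $\xi$: writing $g(\xi) := w(\tfrac{\pi}{2}+\xi, y)$, we have $g(\xi) = -g(-\xi)$, so $g$ is an odd function of $\xi$ and therefore $g''(\xi) = -g''(-\xi)$ is also odd (the second derivative of an odd $C^2$ function is odd). Since $g''(\xi) = w_{xx}(\tfrac{\pi}{2}+\xi, y)$, evaluating at $\xi = 0$ forces $w_{xx}(\tfrac{\pi}{2}, y) = -w_{xx}(\tfrac{\pi}{2}, y)$, hence $w_{xx}(\tfrac{\pi}{2}, y) = 0$ for all $y$. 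The same argument applied about $x = -\tfrac{\pi}{2}$ (or by using evenness $w_{xx}(-x,y) = w_{xx}(x,y)$ directly) gives $w_{xx}(-\tfrac{\pi}{2}, y) = 0$. This establishes that $x = \pm\tfrac{\pi}{2}$ are lines of inflection.

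The only real subtlety — and the step I would be most careful about — is the bookkeeping of which symmetry is used where: one must verify that the composition genuinely yields a reflection about $x = \tfrac{\pi}{2}$ rather than a pure translation, and that the $C^2$ hypothesis is invoked so that $w_{xx}$ exists and the differentiation of the functional identity is legitimate. There is no hard analysis here; it is purely a matter of tracking the affine transformations of the $x$-variable and noting that the odd reflection symmetry pins the second derivative to zero on the fixed line. I would present it compactly, emphasizing the clean conclusion that the symmetries alone — with no appeal to the Monge--Ampère equation — force the inflection lines to sit at $x = \pm\tfrac{\pi}{2}$, which is exactly what makes these the natural interfaces between partially convex patches in the constructions that follow.
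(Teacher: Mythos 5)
Your proof is correct and is essentially the paper's argument: both combine the half-period anti-symmetry with evenness to force $w_{xx}(\pm\tfrac{\pi}{2},y) = -w_{xx}(\pm\tfrac{\pi}{2},y)$, hence zero. The only cosmetic difference is that you compose the symmetries at the level of $w$ first (obtaining oddness under reflection about $x=\tfrac{\pi}{2}$) and then differentiate, whereas the paper differentiates the two symmetry identities first and then evaluates at $x=-\tfrac{\pi}{2}$.
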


\begin{proof}
The translational anti-symmetry implies a corresponding anti-symmetry for the second derivative, $w_{xx}(x+\pi,y) = -w_{xx}(x,y).$ The reflectional symmetry implies that $w_{xx}$ is even,
$w_{xx}(-x,y) = w_{xx}(x,y).$

Evaluating the first identity at $x=-\tfrac{\pi}{2}$ gives
\[
w_{xx}(\tfrac{\pi}{2},y) = -w_{xx}(-\tfrac{\pi}{2},y).
\]
From the second identity we know $w_{xx}(-\tfrac{\pi}{2},y) = w_{xx}(\tfrac{\pi}{2},y)$. Substituting this relation yields
\[
w_{xx}(\tfrac{\pi}{2},y) = -w_{xx}(\tfrac{\pi}{2},y).
\]
Hence $2w_{xx}(\tfrac{\pi}{2},y) = 0$, so $w_{xx}(\tfrac{\pi}{2},y) = 0$. By the even symmetry of $w_{xx}$, it must also vanish at $x=-\tfrac{\pi}{2}$.
\end{proof}

\subsection{Smooth Gluing of Symmetric Solutions}
\begin{theorem}[$C^\infty$ Gluing]
\label{thm:c_inf_gluing}
Assume $\lambda(y)$ is $C^\infty$ and let $w(x,y)$ be a $C^\infty$ solution on a fundamental patch $x \in [-x_+, x_+]$ that is even in $x$. Then the anti-symmetric extension defined by $w(x+2x_+, y) = -w(x,y)$ yields a globally $C^\infty$ periodic solution.
\end{theorem}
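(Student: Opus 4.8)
\emph{Plan.} Away from the gluing lines $x\in\{(2k+1)x_+:k\in\mathbb Z\}$ the extension $W$ is, up to sign, a translate of $w$ and is therefore $C^\infty$; near the lines $x=2kx_+$ the rule $W(x+2x_+,y)=-W(x,y)$ identifies $W$ with $-w$ near $x=0$, which is $C^\infty$. So it suffices to prove $C^\infty$ regularity at the single seam $x=x_+$. Using $w(-x,y)=w(x,y)$, the extension rule gives $W(x,y)=w(x,y)$ for $x\le x_+$ and $W(x,y)=-w(2x_+-x,y)$ for $x\ge x_+$; thus $W$ is odd about $x=x_+$, and $W\in C^\infty$ near $x=x_+$ if and only if the one–sided Taylor coefficients $\phi_k(y):=\partial_x^k w(x_+,y)$ vanish identically for every even $k$. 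Here $\phi_0\equiv 0$ is forced by consistency of the extension with the evenness, and $\phi_2\equiv 0$ because $x=x_+$ is a line of inflection bounding the fundamental patch; the substance of the theorem is that $\phi_{2j}\equiv 0$ for all $j\ge 2$ as well.

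Next I would establish a transport recursion for the $\phi_{2j}$. Since $\phi_0\equiv 0$, all pure $y$-derivatives of $w$ vanish on $x=x_+$; in particular $w_{yy}|_{x=x_+}=0$, and from \eqref{eq:hma} with $w_{xx}|_{x=x_+}=0$ one gets $w_{xy}(x_+,y)^2=\lambda(y)^2$. Differentiating \eqref{eq:hma} $2j-1$ times in $x$ and restricting to $x=x_+$, every term carrying the new top derivative $\partial_x^{2j+1}w$ is multiplied by $w_{yy}|_{x=x_+}=0$, and, under the inductive hypothesis $\phi_2\equiv\cdots\equiv\phi_{2j-2}\equiv 0$, every remaining term other than those containing $\phi_{2j}$ carries one of $\phi_0,\phi_2,\dots,\phi_{2j-2}$ or a $y$-derivative of one of them (the odd coefficients $\phi_{2k+1}$, which are not a priori zero, only ever appear multiplied by such a factor). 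What is left is a homogeneous first-order linear ODE in $y$ for $\phi_{2j}$ whose nontrivial solutions are the constant multiples of $\lambda(y)^{(2j-1)/2}$, so $\phi_{2j}(y)=C_{2j}\,\lambda(y)^{(2j-1)/2}$ for a constant $C_{2j}$.

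It remains to show $C_{2j}=0$, and this is the crux. The inflection line $x=x_+$ is a characteristic line for \eqref{eq:hma} linearized at $w$ (both $w_{xx}$ and $w_{yy}$ vanish there), so the $C_{2j}$ are genuine Goursat freedoms of the jet along a characteristic, invisible to the local computation above, and must be pinned down by the requirement that $w$ embed into a single-valued solution on the whole cylinder. I would do this through the hodograph formulation of Section~\ref{sec:hodograph}: in characteristic coordinates \eqref{eq:hma} becomes the linear equation \eqref{eq:linear_pde_st}, the inflection line $x=x_+$ becomes a characteristic edge of the hodograph domain, and $\phi_0\equiv\phi_2\equiv 0$ makes the datum there the constant $x\equiv x_+$. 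The anti-symmetric extension corresponds to the second branch $2x_+-x$ of the two-sheeted hodograph map, which again solves \eqref{eq:linear_pde_st} since the operator \eqref{eq:LinearOperator} is unchanged under $x\mapsto 2x_+-x$; and $C^\infty$-gluing across the inflection line in the physical variables is equivalent to $x-x_+$ being an \emph{odd} function of the transverse square-root variable near the caustic, i.e.\ to the vanishing of the coefficients of its even powers of that variable, which are exactly the $C_{2j}$. Matching this half-integer transverse expansion to the single-valued smooth solution that \eqref{eq:linear_pde_st} furnishes on the closed-up hodograph domain then forces $C_{2j}=0$ and finishes the proof.

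The hard part is precisely this last matching. The characteristic nature of the interface rules out any appeal to non-characteristic Cauchy uniqueness, so one cannot simply ``continue'' the solution across the inflection line; and the corner of the hodograph domain where the two caustic edges meet is exactly the inverse–square–root/corner singularity treated systematically in Section~\ref{sec:singularities}, so controlling the transverse expansion uniformly up to that corner — which is what actually kills the free constants — is where the real work lies.
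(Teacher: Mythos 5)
Your opening reduction is exactly the paper's: the anti-symmetry of the extension combined with the evenness of $w$ forces $\partial_x^n w(x_+,y)=(-1)^{n+1}\partial_x^n w(x_+,y)$, so the glued function is $C^\infty$ precisely when every even-order $x$-derivative of $w$ vanishes on the seam (odd orders are automatic). Where you diverge is in what happens next. The paper stops essentially there: it verifies the vanishing condition only for solutions of separable product form $g(y)\cos(kx)$, for which all even $x$-derivatives are proportional to $\cos(kx)$ and hence vanish at $x_+=\pi/(2k)$. You instead try to prove the vanishing for an arbitrary even $C^\infty$ solution, via a jet recursion along the seam followed by a hodograph matching argument.

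The problem is that your proof is not complete, and you say so yourself. Your transport recursion (which looks structurally right: differentiating \eqref{eq:hma} $2j-1$ times in $x$, using $w_{yy}=w_{xx}=0$ and $w_{xy}^2=\lambda^2$ on the seam together with the inductive hypothesis, leaves a homogeneous first-order ODE for $\phi_{2j}$ — though I get the exponent $(2j-1)/4$ rather than $(2j-1)/2$, which does not affect the structure) shows precisely that the local analysis does \emph{not} force $\phi_{2j}=0$: each $\phi_{2j}$ is a free constant multiple of a fixed power of $\lambda$. The entire burden of the theorem therefore falls on showing $C_{2j}=0$, and for that step you offer only a description of a strategy (odd/even parity of $x-x_+$ in a square-root transverse variable near the caustic, matched against the smooth hodograph solution) while explicitly conceding that the uniform control up to the corner singularity "is where the real work lies." A proof that names its crux and then does not supply it has a genuine gap. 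To be fair, your analysis exposes something the paper glosses over: the theorem as stated for a general even solution is not established by the paper's argument either, since the paper only checks the separable case; but identifying that the missing constants are Goursat freedoms is a diagnosis, not a proof that they vanish.
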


\begin{proof}
For the extended function to be $C^\infty$, all of its derivatives must be continuous at the gluing boundaries, e.g., at $x=x_+$. The anti-symmetric extension rule implies that, for the $n$-th derivative to be continuous, it must satisfy the gluing condition:
\[
\lim_{x\to x_+^+} w^{(n)}(x,y) = -w^{(n)}(x_+-2x_+, y) = -w^{(n)}(-x_+, y).
\]
The symmetry of the solution on the patch, $w(x,y)=w(-x,y)$, implies that its $n$-th derivative has a definite symmetry: $w^{(n)}(-x_+,y) = (-1)^n w^{(n)}(x_+,y)$. Substituting this into the gluing condition gives the requirement for continuity of the $n$-th derivative:
\[
w^{(n)}(x_+,y) = -[(-1)^n w^{(n)}(x_+,y)] = (-1)^{n+1} w^{(n)}(x_+,y).
\]
We examine this condition separately for even derivatives ($n$ even) and odd derivatives ($n$ odd):
\begin{itemize}
    \item \textbf{Even derivatives:} The condition becomes $w^{(n)}(x_+,y) = -w^{(n)}(x_+,y)$, which implies $w^{(n)}(x_+,y) = 0$. Thus, for smooth gluing to be possible, all even-order $x$-derivatives of the solution must vanish at the inflection points.
    \item \textbf{Odd derivatives:} The condition becomes $w^{(n)}(x_+,y) = w^{(n)}(x_+,y)$. This is always true and imposes no further constraints.
\end{itemize}
A solution that is even and arises from a separable product form, such as $w(x,y)=g(y)\cos(kx)$, automatically satisfies the condition that all its even $x$-derivatives (which are proportional to $\cos(kx)$) vanish at the inflection points $x_+ = \pi/(2k)$. The same reasoning applies to all mixed derivatives $\partial_y^m \partial_x^n w$ due to the smoothness of $\lambda(y)$. Therefore, the gluing is not just $C^2$ or $C^3$, but smooth to all orders, yielding a global $C^\infty$ solution.
\end{proof}

\subsection{Implications for the Goursat and Cauchy Problems}
The geometric properties of this symmetric class of solutions directly motivate the boundary value problems analyzed in the remainder of this paper. 
In particular, the location of inflection lines and the role of symmetry provide the natural boundary framework for both Goursat and Cauchy formulations.

\medskip
\noindent {\bf The Goursat Problem.} The fixed inflection lines at $x=\pm\tfrac{\pi}{2}$ naturally define a fundamental solution patch, like the concave region where $w_{xx}<0$ on the physical domain $x \in [-\tfrac{\pi}{2}, \tfrac{\pi}{2}]$. In the hodograph plane, these inflection lines correspond to the boundaries of the domain for the function $x(s,t)$. This motivates the choice of Goursat boundary conditions used in the parametrix construction and in analyzing the singular problem at the origin:
\[
x_{\pm} = \pm \tfrac{\pi}{2}.
\]

\medskip
\noindent {\bf The Cauchy Problem.} The Cauchy data for the physical problem is typically imposed at a ``remote'' boundary, such as a line of constant $y=L$. Recalling the potential coordinate $u$ from \eqref{eq:defofu}, we note that the far field in physical space corresponds to the near field in hodograph space: as $y \to \infty$, one has $u \to 0^+$. A large but finite $y=L$ corresponds to a small positive value of the time-like variable, $c=u(L)$. Therefore, the physical Cauchy problem for $w$ at large $y$ becomes the initial value problem for $x$ at small $u$. For consistency with the global structure, we assume that any prescribed Cauchy data for $w$ and $w_y$ at $y=L$ also respect the symmetry conditions.

\section{The Well-Posed Cauchy--Goursat Problem}
\label{sec:cauchy-goursat}

Before analyzing the full singular problem at the origin, we first establish well-posedness on a regularized domain where the corner is truncated. This leads to the \emph{Cauchy--Goursat problem}, a mixed initial-boundary value problem for hyperbolic PDEs \cite{CourantHilbert1962} in which part of the boundary is non-characteristic (Cauchy data) and part is characteristic (Goursat data). Such structures naturally arise in physical models: the Cauchy problem prescribes initial data on a spacelike curve, while the Goursat problem prescribes boundary data along (possibly intersecting) characteristic curves, such as the edges of a light cone.

In this section, we formulate the problem and clarify the compatibility requirements on the boundary data. We then prove uniqueness of classical solutions via contour integration, analyze the geometric cases, and unify them into a single Volterra-type integral equation. Finally, we extend this framework to weak solutions, establishing existence, uniqueness, and precise attainment of the prescribed boundary data \cite{CourantHilbert1962,TricomibookIntegralEquations}.

\subsection{Problem Formulation}
We define the domain of interest as follows:
\begin{equation}\label{DomainOmega}
\text{For a constant } c>0, \text{ let } \Omega = \{ (s,t) \in \mathbb{R}^2 \mid s>0, t>0, s+t>c \},
\end{equation}
which we refer to as the \emph{finite-cylinder domain}. The boundary $\partial \Omega$ of this domain's closure is composed of three segments:
\begin{itemize}
    \item A {\em non-characteristic} (Cauchy) segment: $\Gamma_C = \{ (s,t) \mid s+t=c,\\ s \ge 0,~ t \ge 0 \}$;
    \item Two {\em characteristic} (Goursat) segments: $\Gamma_1 = \{ (s,0) \mid s \ge c \}$ and $\Gamma_2 = \{ (0,t) \mid t \ge c \}$.
\end{itemize}

This setup showcases the interaction between the Cauchy line, where data is specified in a non-characteristic fashion, and the Goursat segments, which are characteristic boundaries. Together they form a truncated cone that avoids the singular corner, as shown in Figure \ref{fig:Setup}.

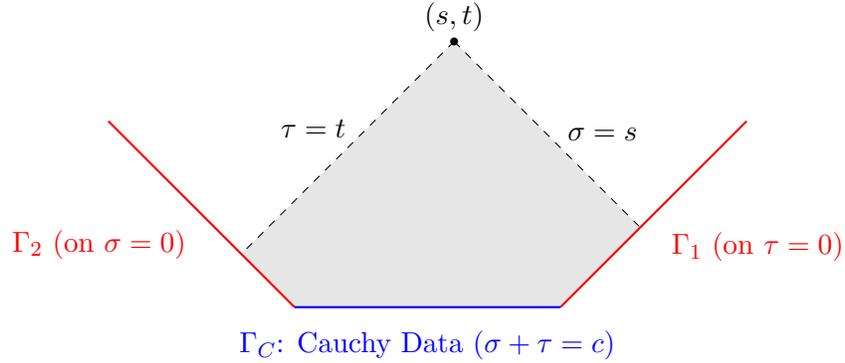
\begin{figure}[htbp]
\centering
\begin{tikzpicture}[scale=1]
    
    \def\cval{2.5}
    \def\sval{4}
    \def\tval{3.5}

    \begin{scope}[rotate=45]
        
        \fill[gray!20] (\sval,\tval) -- (0,\tval) -- (0,\cval) -- (\cval,0) -- (\sval,0) -- cycle;
        
        \draw[thick, blue] (0,\cval) -- (\cval,0); 
        \draw[thick, red] (\cval,0) -- (6,0);      
        \draw[thick, red] (0,\cval) -- (0,6);      

        \draw[dashed] (\sval,\tval) -- (\sval,0); 
        \draw[dashed] (\sval,\tval) -- (0,\tval); 

        \coordinate (testpoint) at (\sval,\tval);
        \coordinate (mid_cauchy) at (\cval/2, \cval/2);
        \coordinate (corner_s) at (\cval, 0);
        \coordinate (mid_sigma) at (\cval/2+3,0);
        \coordinate (corner_t) at (0, \cval);
        \coordinate (mid_tau) at (0,\cval/2+3);
        \coordinate (mid_s) at (\sval,\tval/2);
        \coordinate (mid_t) at (\sval/2,\tval);
        
    \node[above, left, xshift=38pt] at (mid_s) {$\sigma = s$};
    \node[above, xshift=-13pt] at (mid_t) {$\tau = t$};

    \end{scope}

    
    \fill (testpoint) circle (1.5pt) node[above] {$(s,t)$};
    
    \node[blue, below=5pt] at (mid_cauchy) {$\Gamma_C$: Cauchy Data ($\sigma+\tau=c$)};
    
    \node[red, below right=3pt, align=center] at (mid_sigma) {$\Gamma_1$ (on $\tau=0$)};
    \node[red, below left=2pt, align=center] at (mid_tau) {$\Gamma_2$ (on $\sigma=0$)};
    
\end{tikzpicture}
\caption{Geometry of the Cauchy--Goursat problem in rotated characteristic 
coordinates $(\sigma,\tau)$. The shaded region shows the domain of dependence 
for $(s,t)$.}\label{fig:Setup}
\end{figure}
On these boundary segments, we prescribe the following data:
\begin{align}
    x(s,0) &= g(s)  \quad\text{for } s \ge c ~ (\text{on } \Gamma_1) \nonumber \\
    x(0,t) &= h(t)  \quad\text{for } t \ge c ~ (\text{on } \Gamma_2) \nonumber \\
    x(s, c-s) &= f(s)  \quad\text{for } 0 \le s \le c ~ (\text{on } \Gamma_C) \nonumber \\
    (\partial_s + \partial_t) x(s, c-s) &= n(s)  \quad\text{for } 0 \le s \le c ~ (\text{on } \Gamma_C). \label{Cauchy-Goursat-BCs}
\end{align}
Here, $\partial_s + \partial_t$ is the derivative normal to the Cauchy segment $\Gamma_C$.

It is important to emphasize that not every choice of boundary data $\{g,h,f,n\}$ yields a meaningful solution. In particular, at the corners $(c,0)$ and $(0,c)$ the boundary conditions must align in a consistent way. To make this precise, we introduce compatibility notions, beginning with weak compatibility of the data.

\begin{definition}[Weak compatibility for the Cauchy--Goursat data]
\label{defn:weak-compatible}
 The Cauchy--Goursat boundary data $\{g,h,f,n\}$ are {\em weakly compatible} if
 \begin{enumerate}
    \item $\{g,h,f\}$ are $C^0$-compatible, meaning that there exists a $C^0$ function in a neighborhood of $\partial\Omega$ that agrees with the prescribed Dirichlet data on $\Gamma_1 \cup\Gamma_C \cup\Gamma_2$. This requires, at a minimum, that $g(c)=f(c)$ and $h(c)=f(0)$.
    \item $f$ is absolutely continuous, so that the tangential derivative is defined a.e.\ on $\Gamma_C$ and is integrable, i.e., $f' \in L^1(\Gamma_C)$.
    \item The prescribed normal derivative on $\Gamma_C$ satisfies $n \in L^1(\Gamma_C)$.
\end{enumerate}
\end{definition}

These assumptions are deliberately weak: they guarantee continuity and integrability without enforcing strict derivative consistency. This flexibility allows us to capture weak solutions, without requiring agreement of derivatives such as $g'(c)$, $h'(c)$, $f'(0)$, $f'(c)$, the normal data $n(0), n(c)$, or even their existence.

Having clarified the mildness of these requirements, we next specify what it means for a function to actually \emph{attain} such weakly compatible data on the boundary.

\begin{definition}[Attainment of boundary conditions]
\label{defn:attainment}
 A continuous function $x \in C^0(\bar{\Omega})$, where $\bar{\Omega}=\Omega \cup \partial\Omega$, attains the weakly compatible Cauchy--Goursat boundary conditions $\{g,h,f,n\}$ if $g,h,$ and $f$ are equal to $x$ restricted to the corresponding segments $\Gamma_1,\Gamma_2$ and $\Gamma_C$ in $\partial \Omega$ and, for almost all $s \in [0,c]$, the limit
 \[
 \lim_{z \to 0^+} \frac{x(s+z,c-s+z) -x(s,c-s)}{z}
 \]
 exists and is equal to $n(s)$.
\end{definition}

The natural function space for solutions to the governing PDE is dictated by its structure. For what follows, we use the following definition for a \emph{classical solution} of the Cauchy--Goursat problem: a function $x \in C^2(\Omega)$ that attains the boundary data \eqref{Cauchy-Goursat-BCs} pointwise. Clearly, if $x(s,t)$ is a classical solution, then the boundary data $\{g,h,f,n\}$ are necessarily weakly compatible.

We are now ready to state the main theorem concerning the well-posedness of the Cauchy--Goursat problem for the specified hyperbolic equation.

\begin{theorem} \label{uniqueness}
Let $\lambda \in C^1$ with $\lambda \neq 0$, and consider the hyperbolic equation
\[
2\lambda(\sigma+\tau) x_{\sigma\tau} + \lambda'(\sigma+\tau)(x_\sigma + x_\tau) = 0
\]
on the domain $\Omega$ with boundary $\partial\Omega = \Gamma_1 \cup \Gamma_2 \cup \Gamma_C$, and suppose that the Cauchy--Goursat boundary data $\{g,h,f,n\}$ are defined as above.

If $x(s,t)$ is a classical solution and $(s,t) \in \bar{\Omega}$, the value $x(s,t)$ is uniquely determined by an explicit integral relation depending only on the solution $x(\sigma,\tau)$ along the characteristics $\sigma = s$ and $\tau = t$ leading to $(s,t)$, together with the prescribed boundary data on the portion of $\partial\Omega$ lying in the causal past of $(s,t)$, namely on $\partial\Omega \cap [0,s] \times [0,t]$.
\end{theorem}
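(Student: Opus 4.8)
The plan is to derive an explicit integral representation for a classical solution $x(s,t)$ by exploiting the divergence structure that was built into the PDE, namely that $L[x]=0$ is precisely the statement $d\alpha = 0$ for the $1$-form $\alpha = \lambda(s+t)(x_s\,ds - x_t\,dt)$ from \eqref{eq:one_form_alpha}. First I would rewrite the equation in a manifestly conservative form: since $2\lambda x_{st} + \lambda'(x_s + x_t) = \partial_s(\lambda x_t) + \partial_t(\lambda x_s)$ (using $\lambda' = d\lambda/d(s+t)$ so that $\partial_s \lambda = \partial_t \lambda = \lambda'$), the equation says $\partial_s(\lambda x_t) = -\partial_t(\lambda x_s)$, i.e. $d\bigl[\lambda(x_s\,ds - x_t\,dt)\bigr] = 0$. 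Actually, to get a representation formula rather than just a conservation law, I would instead look for a Riemann-type function: find a multiplier $R(\sigma,\tau;s,t)$ such that $R\cdot L[x] - x\cdot L^*[R]$ is an exact divergence $\partial_\sigma(\cdots) + \partial_\tau(\cdots)$, where $L^*$ is the formal adjoint. Because the principal part is simply $\partial_{\sigma\tau}$, the adjoint is again a damped wave operator, and the Riemann function is characterized by a characteristic (Goursat) problem of its own along $\sigma = s$ and $\tau = t$. For the specific operator here, one can be more hands-on: the substitution $x = \phi/\sqrt{\lambda}$ (or an integrating-factor argument) turns \eqref{wave-equation} into $\phi_{st} + c(s+t)\phi = 0$ for an explicit potential $c$, a Klein–Gordon-type equation whose Riemann function is classical.

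The key steps, in order, are: (1) put $L[x]=0$ in divergence form and identify the formal adjoint $L^*$; (2) construct the Riemann function $R(\sigma,\tau;s,t)$ as the solution of the adjoint equation $L^*[R]=0$ with the characteristic normalization $R = 1$ on $\{\sigma = s\}\cup\{\tau=t\}$ near $(s,t)$ — existence here is itself a Goursat problem for a regular equation, solvable by Picard iteration / a convergent series since $\lambda \in C^1$, $\lambda \neq 0$; (3) integrate the exact form $R\,L[x] - x\,L^*[R] = \partial_\sigma(\cdots)+\partial_\tau(\cdots)$ over the region $D_{s,t} = \bar\Omega \cap [0,s]\times[0,t]$ — the triangular/pentagonal "causal past" of $(s,t)$ — and apply Green's theorem; (4) evaluate the resulting boundary integral, noting that on the two characteristic sides of $D_{s,t}$ interior to $\Omega$ the contributions reduce to point values plus single integrals of $x$ (because $R$ is normalized there and the form degenerates along characteristics), while on the portion of $\partial D_{s,t}$ lying in $\partial\Omega$ the integrand is expressed entirely through the prescribed data $\{g,h,f,n\}$ (Dirichlet data supplies $x$ and its tangential derivative, and on $\Gamma_C$ the normal datum $n$ supplies the missing transversal derivative; on the characteristic pieces $\Gamma_1,\Gamma_2$ only Dirichlet data is needed, consistent with a Goursat boundary). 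Collecting terms yields the asserted formula: $x(s,t)$ equals an explicit combination of $x$ along the characteristics $\sigma=s$, $\tau=t$ through $(s,t)$ and an integral over $\partial\Omega \cap [0,s]\times[0,t]$ of the data. Uniqueness is then immediate: if two classical solutions share the data, their difference has vanishing boundary contribution, so the formula forces $x\equiv 0$ — more carefully, one runs the same identity on every subrectangle to get a homogeneous Volterra relation and concludes by the standard Gronwall/iteration argument that the difference vanishes throughout $\bar\Omega$.

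The main obstacle I anticipate is handling the geometry of the domain of dependence $D_{s,t}$ as the point $(s,t)$ moves, since $\Omega$ is a truncated cone and $D_{s,t}$ changes combinatorial type: for $(s,t)$ with $s+t$ only slightly larger than $c$ the causal past is a small triangle meeting only $\Gamma_C$, whereas for $(s,t)$ deeper in $\Omega$ it is a pentagon meeting $\Gamma_C$, $\Gamma_1$, and $\Gamma_2$, and the two corners $(c,0)$, $(0,c)$ require care because that is exactly where weak (rather than $C^k$) compatibility is all that is assumed. I would organize step (4) as a case analysis on which boundary segments $[0,s]\times[0,t]$ intersects, checking in each case that the boundary integral is well-defined under the hypotheses of Definition~\ref{defn:weak-compatible} — this is where absolute continuity of $f$ and $n\in L^1(\Gamma_C)$ get used, and where one must verify that the corner terms from adjacent segments telescope using the $C^0$-compatibility conditions $g(c)=f(c)$, $h(c)=f(0)$ so that no spurious jump contributions survive. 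A secondary technical point is verifying that the Riemann function $R$ and its first derivatives are continuous up to the characteristic sides (so the point-value terms in step (4) are legitimate), which follows from the regularity of the Goursat data $R\equiv 1$ together with $\lambda\in C^1$; I would state this as a lemma and prove it by the same iteration used to construct $R$.
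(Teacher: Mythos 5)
Your proposal is correct in outline but takes a genuinely heavier route than the paper. The paper stops at your very first observation: since $L[x]=0$ is literally $d\alpha=0$ for $\alpha=\lambda(\sigma+\tau)(x_\sigma\,d\sigma-x_\tau\,d\tau)$, Stokes' theorem gives $\oint_C\alpha=0$, and the entire proof consists of choosing, for each of four positions of $(s,t)$ relative to the truncated corner, a closed contour whose two ``unknown'' sides run along the characteristics $\sigma=s$ and $\tau=t$ and whose remaining sides lie on $\partial\Omega$ where the data $\{g,h,f,n\}$ determine the integrand. No Riemann function is constructed; the resulting relation is allowed to involve $x$ itself along the two characteristics through $(s,t)$ (which is exactly what the theorem asserts), and the reduction to a Volterra equation by integration by parts is deferred to the next proposition. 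Your Riemann-function version would, if executed with the \emph{correct} characteristic normalization, prove something stronger --- a closed-form representation of $x(s,t)$ purely in terms of the data, since $\Gamma_1$ and $\Gamma_2$ are themselves characteristics --- at the cost of first solving an auxiliary Goursat problem for $R$, which requires essentially the same Volterra machinery the paper builds anyway. Two caveats on your write-up. First, the normalization $R\equiv 1$ on $\{\sigma=s\}\cup\{\tau=t\}$ is not the Riemann normalization for this operator: because the first-order coefficients $\lambda'$ do not vanish, $R$ must instead satisfy the characteristic ODEs $2\lambda R_\tau=\lambda' R$ on $\sigma=s$ and $2\lambda R_\sigma=\lambda' R$ on $\tau=t$ (giving $R=\sqrt{\lambda(\sigma+t)/\lambda(s+t)}$, etc.); with $R\equiv1$ the characteristic boundary integrals of $x$ do not collapse to endpoint values, and your method degenerates to exactly the paper's multiplier-free identity --- which still suffices for the stated theorem, so this is a slip in description rather than a fatal gap. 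Second, note that the divergence form $L=\partial_\sigma(\lambda\,\partial_\tau\cdot)+\partial_\tau(\lambda\,\partial_\sigma\cdot)$ is formally self-adjoint, so $L^*=L$ and only $\lambda\in C^1$ is needed, whereas your alternative reduction $x=\phi/\sqrt{\lambda}$ to a Klein--Gordon form would require $\lambda\in C^2$, which the theorem does not assume. Your case analysis on the combinatorial type of the causal past and your use of weak compatibility on $\Gamma_C$ match the paper's treatment.
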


\begin{proof}
The key observation is that the PDE is equivalent to the exactness of the 1-form
$\alpha = \lambda(\sigma+\tau)(x_\sigma \, d\sigma - x_\tau \, d\tau).$ Hence, the integral of $\alpha$ over any closed contour $C$ must vanish, i.e., $\oint_C \alpha = 0$.
The main step is to construct $C$ carefully and, depending on the position of the point $P(s,t)$, different contour shapes arise. These four cases are summarized in Figure \ref{fig:CaseContours}, which will serve as the guiding diagrams throughout the proof.

\begin{figure}[h!]
\centering
\begin{tabular}{cc}
    \begin{tikzpicture}[scale=0.9]
        \def\cval{2}
        \def\sval{4}
        \def\tval{3.5}

        \fill[blue!15] (\cval,-0.15) rectangle (\sval+0.5,0.15);
        \fill[blue!15] (-0.15,\cval) rectangle (0.15,\tval+0.5);
        \fill[blue!15] (\cval,-0.15) -- (0,\cval-0.15) -- (0,\cval+0.15) -- (\cval,0.15) -- cycle;

        \draw[blue, line width=2.5pt] (\cval,0) -- (\sval+0.5,0);
        \draw[blue, line width=2.5pt] (0,\cval) -- (0,\tval+0.5);
        \draw[blue, line width=2.5pt] (\cval,0) -- (0,\cval) node[midway, above, sloped] {$\sigma+\tau=c$};

        \draw[->] (-0.2,0) -- (5.5,0) node[right] {$\sigma$};
        \draw[->] (0,-0.2) -- (0,5) node[above] {$\tau$};

        \draw[red, line width=1.5pt, ->] (\sval,\tval) node[above right] {$P(s,t)$}
            -- (\sval,0) node[below] {$A$};
        \draw[red, line width=1.5pt, ->] (\sval,0) -- (\cval,0) node[below] {$B$};
        \draw[red, line width=1.5pt, ->] (\cval,0) -- (0,\cval) node[left] {$D$};
        \draw[red, line width=1.5pt, ->] (0,\cval) -- (0,\tval) node[left] {$E$};
        \draw[red, line width=1.5pt, ->] (0,\tval) -- (\sval,\tval);

        \fill (\sval,\tval) circle (2pt);
    \end{tikzpicture}
    & 
    \begin{tikzpicture}[scale=0.9]
        \def\cval{3}
        \def\sval{1.5}
        \def\tval{4.5}

        \fill[blue!15] (\cval,-0.15) rectangle (5.2,0.15);
        \fill[blue!15] (-0.15,\cval) rectangle (0.15,5);
        \fill[blue!15] (\cval,-0.15) -- (0,\cval-0.15) -- (0,\cval+0.15) -- (\cval,0.15) -- cycle;

        \draw[blue, line width=2.5pt] (\cval,0) -- (5.2,0);
        \draw[blue, line width=2.5pt] (0,\cval) -- (0,5);
        \draw[blue, line width=2.5pt] (\cval,0) -- (0,\cval) node[midway, above, sloped] {$\sigma+\tau=c$};

        \draw[->] (-0.2,0) -- (5.5,0) node[right] {$\sigma$};
        \draw[->] (0,-0.2) -- (0,5) node[above] {$\tau$};

        \draw[red, line width=1.5pt, ->] (\sval,\tval) node[above right] {$P(s,t)$}
            -- (\sval,\cval-\sval) node[below] {$A$};
        \draw[red, line width=1.5pt, ->] (\sval,\cval-\sval) -- (0,\cval) node[left] {$B$};
        \draw[red, line width=1.5pt, ->] (0,\cval) -- (0,\tval) node[left] {$D$};
        \draw[red, line width=1.5pt, ->] (0,\tval) -- (\sval,\tval);

        \fill (\sval,\tval) circle (2pt);
    \end{tikzpicture} \\

    (a) Case I & (b) Case II \\[1em]

    \begin{tikzpicture}[scale=0.9]
        \def\cval{3}
        \def\sval{4.5}
        \def\tval{1.5}

        \fill[blue!15] (\cval,-0.15) rectangle (5.2,0.15);
        \fill[blue!15] (-0.15,\cval) rectangle (0.15,5);
        \fill[blue!15] (\cval,-0.15) -- (0,\cval-0.15) -- (0,\cval+0.15) -- (\cval,0.15) -- cycle;

        \draw[blue, line width=2.5pt] (\cval,0) -- (5.2,0);
        \draw[blue, line width=2.5pt] (0,\cval) -- (0,5);
        \draw[blue, line width=2.5pt] (\cval,0) -- (0,\cval) node[midway, above, sloped] {$\sigma+\tau=c$};

        \draw[->] (-0.2,0) -- (5.5,0) node[right] {$\sigma$};
        \draw[->] (0,-0.2) -- (0,5) node[above] {$\tau$};

        \draw[red, line width=1.5pt, ->] (\sval,\tval) node[above right] {$P(s,t)$}
            -- (\cval-\tval,\tval) node[left] {$A$};
        \draw[red, line width=1.5pt, ->] (\cval-\tval,\tval) -- (\cval,0) node[below] {$B$};
        \draw[red, line width=1.5pt, ->] (\cval,0) -- (\sval,0) node[below] {$D$};
        \draw[red, line width=1.5pt, ->] (\sval,0) -- (\sval,\tval);

        \fill (\sval,\tval) circle (2pt);
    \end{tikzpicture}
    &
    \begin{tikzpicture}[scale=0.9]
        \def\cval{4}
        \def\sval{1.5}
        \def\tval{3.5}

        \fill[blue!15] (\cval,-0.15) rectangle (5.2,0.15);
        \fill[blue!15] (-0.15,\cval) rectangle (0.15,5);
        \fill[blue!15] (\cval,-0.15) -- (0,\cval-0.15) -- (0,\cval+0.15) -- (\cval,0.15) -- cycle;

        \draw[blue, line width=2.5pt] (\cval,0) -- (5.2,0);
        \draw[blue, line width=2.5pt] (0,\cval) -- (0,5);
        \draw[blue, line width=2.5pt] (\cval,0) -- (0,\cval) node[midway, above, sloped] {$\sigma+\tau=c$};

        \draw[->] (-0.2,0) -- (5.5,0) node[right] {$\sigma$};
        \draw[->] (0,-0.2) -- (0,5) node[above] {$\tau$};

        \draw[red, line width=1.5pt, ->] (\sval,\tval) node[above right] {$P(s,t)$}
            -- (\sval,\cval-\sval) node[left] {$A$};
        \draw[red, line width=1.5pt, ->] (\sval,\cval-\sval) -- (\cval-\tval,\tval) node[below left] {$B$};
        \draw[red, line width=1.5pt, ->] (\cval-\tval,\tval) -- (\sval,\tval);

        \fill (\sval,\tval) circle (2pt);
    \end{tikzpicture} \\

    (c) Case III & (d) Case IV
\end{tabular}
\caption{Integration contours for Cases I–IV. The known boundary is in blue.} \label{fig:CaseContours}
\end{figure}
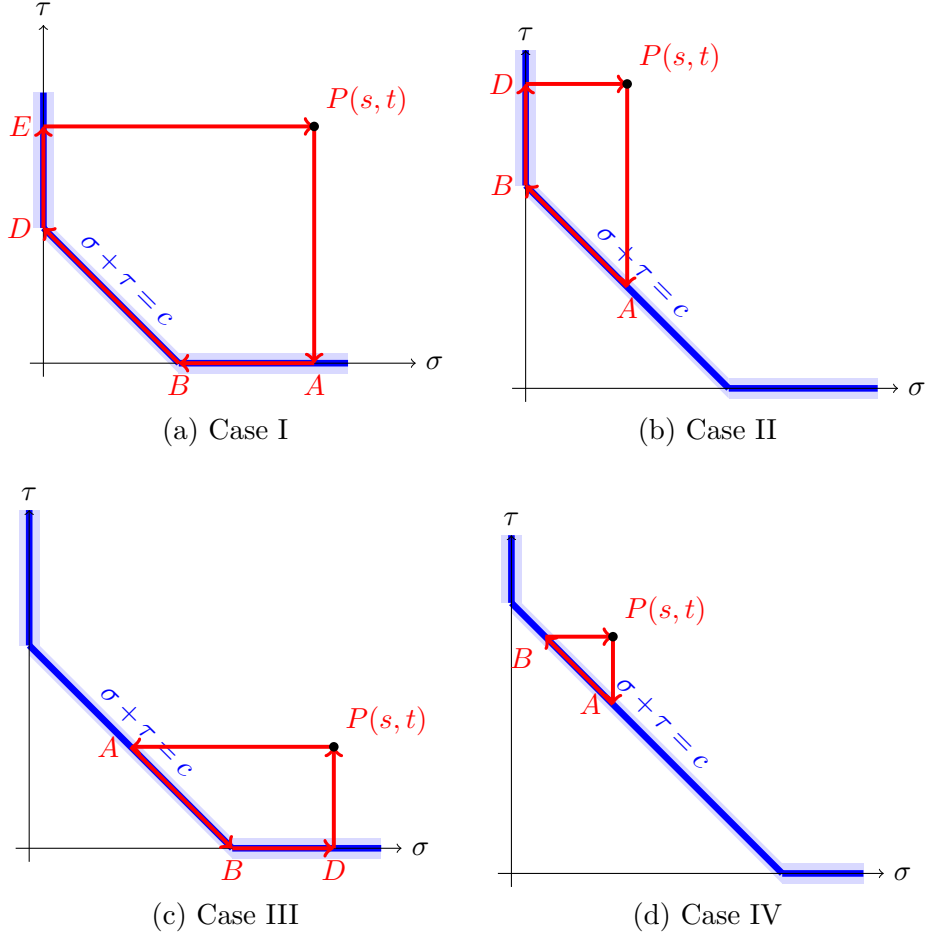
In each case, the contour integrals naturally split into two parts: those involving the unknown function $x$ along the characteristic segments leading to $(s,t)$, and those lying entirely on the prescribed boundary (the blue curves in Figure~\ref{fig:CaseContours}), where the Cauchy–Goursat data determine the values. We will refer to these as the “unknown” and “known” contributions, respectively. For our analysis, we proceed by examining the four separately.

\medskip
\noindent \textbf{Case I: $s > c$ and $t > c$.}
We choose a pentagonal contour $C$ with vertices
\[
P(s,t) \to A(s,0) \to B(c,0) \to D(0,c) \to E(0,t) \to P(s,t),
\]
as illustrated in Figure \ref{fig:CaseContours}(a).

The resulting integral equation for $x$ is
\[
\int_0^s \lambda(\sigma+t)x_\sigma(\sigma,t)\, d\sigma
+ \int_0^t \lambda(s+\tau)x_\tau(s,\tau)\, d\tau
= -\mathcal{J}_1,
\]
where
\[
\mathcal{J}_1 = \int_s^c \lambda(\sigma)x_\sigma(\sigma,0)\, d\sigma
+ \int_c^0 \lambda(c)n(\sigma)\, d\sigma
- \int_c^t \lambda(\tau)x_\tau(0,\tau)\, d\tau.
\]

\medskip
\noindent \textbf{Case II: $0 \le s \le c$ and $t > c$.}
We choose a quadrilateral contour $C$ with vertices
\[
P(s,t) \to A(s,c-s) \to B(0,c) \to D(0,t) \to P(s,t),
\]
as illustrated in Figure \ref{fig:CaseContours}(b).

The resulting integral equation for $x$ is
\[
\int_0^{s} \lambda(\sigma+t)x_\sigma(\sigma,t)\, d\sigma
+ \int_{c-s}^{t} \lambda(s+\tau)x_\tau(s,\tau)\, d\tau
= -\mathcal{J}_2,
\]
where
\[
\mathcal{J}_2 = \int_{s}^0 \lambda(c)n(\sigma)\, d\sigma
- \int_c^{t} \lambda(\tau)x_\tau(0,\tau)\, d\tau.
\]

\medskip
\noindent \textbf{Case III: $s > c$ and $0 \le t \le c$.}
By symmetry with Case II, we choose a quadrilateral contour $C$ with vertices
\[
P(s,t) \to A(c-t,t) \to B(c,0) \to D(s,0) \to P(s,t),
\]
as illustrated in Figure \ref{fig:CaseContours}(c).

The resulting integral equation for $x$ is
\[
\int_{c-t}^{s} \lambda(\sigma+t)x_\sigma(\sigma,t)\, d\sigma
+ \int_0^{t} \lambda(s+\tau)x_\tau(s,\tau)\, d\tau
= -\mathcal{J}_3,
\]
where
\[
\mathcal{J}_3 = \int_{t}^0 \lambda(c)n(c-\tau)\, d\tau
+ \int_c^{s} \lambda(\sigma)x_\sigma(\sigma,0)\, d\sigma.
\]

\medskip
\noindent \textbf{Case IV: $0 < s < c$, $0 < t < c$, and $s + t > c$.}
We construct a triangular contour $C$ with vertices
\[
P(s,t) \to A(s,c-s) \to B(c-t,t) \to P(s,t),
\]
as illustrated in Figure \ref{fig:CaseContours}(d).

Rearranging terms from the vanishing integral condition gives
\[
\int_{c-t}^{s} \lambda(\sigma+t)x_\sigma(\sigma,t)\, d\sigma
+ \int_{c-s}^{t} \lambda(s+\tau)x_\tau(s,\tau)\, d\tau
= -\mathcal{J}_4,
\]
where
\[
\mathcal{J}_4 = \int_{s}^{c-t} \lambda(c)n(\sigma)\, d\sigma. \qedhere
\]
\end{proof}

\begin{remark}
In the proof above, the quantities $\mathcal{J}_i$ ($i=1,2,3,4$) arise as appropriate integrals
along portions of the boundary $\partial \Omega$. These integrals involve the given Cauchy--Goursat
boundary data $\{g,h,f,n\}$, defined in \eqref{Cauchy-Goursat-BCs}.
\end{remark}

The relations obtained in {\bf I–IV} share the same underlying structure. Our next step is to introduce a generalized contour that accommodates all four cases and unifies them into a single integral equation.

\begin{proposition}
The integral relations derived previously can be recast, via integration by parts, into a unified Volterra-type integral equation for the function $x(s,t)$.
\end{proposition}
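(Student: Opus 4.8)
The plan is to integrate by parts in each of the four relations \textbf{I}--\textbf{IV} obtained in the proof of Theorem~\ref{uniqueness}. In every case the left-hand side has the shape $\int_{a}^{s}\lambda(\sigma+t)\,x_\sigma(\sigma,t)\,d\sigma + \int_{b}^{t}\lambda(s+\tau)\,x_\tau(s,\tau)\,d\tau$ for suitable lower limits $a=a(s,t)$, $b=b(s,t)$. I would apply
\[
\int_{a}^{s}\lambda(\sigma+t)\,x_\sigma(\sigma,t)\,d\sigma
= \lambda(s+t)\,x(s,t) - \lambda(a+t)\,x(a,t) - \int_{a}^{s}\lambda'(\sigma+t)\,x(\sigma,t)\,d\sigma
\]
and the analogous identity for the $\tau$-integral. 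Adding the two, the endpoint term $\lambda(s+t)x(s,t)$ appears \emph{twice}, so each relation becomes $2\lambda(s+t)\,x(s,t)$ minus the two entry-point values $\lambda(a+t)x(a,t)$, $\lambda(s+b)x(s,b)$, minus two integrals now involving $x$ itself (not its derivatives), and all equal to $-\mathcal{J}_i$. Since $\lambda\neq 0$, dividing by $2\lambda(s+t)$ isolates $x(s,t)$ in a second-kind integral relation.

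Next I would show that the four resulting formulas are in fact one. The key observation is that the lower limits are, in every case, $a=\sigma_*(t):=\max\{0,c-t\}$ and $b=\tau_*(s):=\max\{0,c-s\}$, i.e.\ the parameters at which the horizontal characteristic $\tau=t$ and the vertical characteristic $\sigma=s$ through $(s,t)$ first meet $\partial\Omega$. The entry points $(\sigma_*(t),t)$ and $(s,\tau_*(s))$ lie on $\Gamma_2$ or $\Gamma_C$, resp.\ $\Gamma_1$ or $\Gamma_C$, according to whether $t\gtrless c$ and $s\gtrless c$, so the values $x(\sigma_*(t),t)$ and $x(s,\tau_*(s))$ are prescribed by $h$ or $f$, resp.\ $g$ or $f$. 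Collecting these boundary values together with $-\mathcal{J}_i$ into a single known function $F(s,t)$ — and, if one wishes, integrating by parts once more in the terms $\int\lambda\,x_\sigma(\sigma,0)\,d\sigma$ and $\int\lambda\,x_\tau(0,\tau)\,d\tau$ inside $\mathcal{J}_1,\mathcal{J}_3$ so that $F$ depends on $g,h$ rather than $g',h'$ — each of \textbf{I}--\textbf{IV} collapses to
\[
x(s,t) = F(s,t) + \int_{\sigma_*(t)}^{s}\frac{\lambda'(\sigma+t)}{2\lambda(s+t)}\,x(\sigma,t)\,d\sigma + \int_{\tau_*(s)}^{t}\frac{\lambda'(s+\tau)}{2\lambda(s+t)}\,x(s,\tau)\,d\tau,
\]
which is the desired unified Volterra-type equation: the case distinction survives only through the piecewise-affine limits $\sigma_*,\tau_*$ and the definition of $F$. (The generalized contour alluded to before the Proposition is then simply the pentagon/quadrilateral/triangle with the entry points taken to be $(\sigma_*(t),t)$ and $(s,\tau_*(s))$.)

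I would close by recording why this deserves the name \emph{Volterra}: the right-hand side involves $x$ only along the two characteristic segments joining $(s,t)$ to $\partial\Omega$, i.e.\ only on the causal past $\partial\Omega\cap[0,s]\times[0,t]$ together with the interior of those characteristics, so iterating the operator yields iterated integrals over the bounded triangle of dependence with the usual factorial gain, and the kernels $\lambda'(\cdot)/2\lambda(s+t)$ are bounded on every compact subset of $\bar\Omega$ since $\lambda\in C^1$ with $\lambda\neq 0$; this is the form for which classical existence/uniqueness theory will apply in the next theorem. I expect the main obstacle to be purely organizational: verifying case by case that the lower limits, the identification of each entry-point value with the correct piece of Dirichlet data, and the reassembly of the $\mathcal{J}_i$ genuinely all reduce to the single pair $(\sigma_*,\tau_*)$ and the single function $F$, and — the one genuinely analytic point — checking that $F\in C^0(\bar\Omega)$, in particular that it extends continuously across the corner points $(c,0)$ and $(0,c)$, which is exactly where the weak-compatibility conditions $g(c)=f(c)$ and $h(c)=f(0)$ are invoked. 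The same manipulations survive verbatim in the weak formulation, since absolute continuity of $f$ and $n\in L^1(\Gamma_C)$ legitimize the integrations by parts and keep $F$ continuous.
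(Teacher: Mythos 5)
Your proposal is correct and follows essentially the same route as the paper: integration by parts shifts the derivative from $x$ onto $\lambda$, the doubled endpoint term produces $2\lambda(s+t)\,x(s,t)$, the lower limits unify to $(c-t)^+$ and $(c-s)^+$, and the boundary contributions (after a further integration by parts in the $\mathcal{J}_1,\mathcal{J}_3$ terms) collapse into a single forcing function, exactly as in the paper's $\mathcal{G}$. The only difference is organizational — the paper first defines a single generalized pentagonal contour with possibly degenerate edges and integrates by parts once, whereas you integrate by parts case by case and then observe the four formulas coincide — and your closing remarks on continuity of the forcing term across the corners anticipate what the paper defers to a subsequent remark.
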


\begin{proof} 
The partition of our domain $\Omega$ into four regions leads to integration contours with different numbers of vertices, depending on the region. In this case, a uniform treatment is achieved by defining a single, generalized pentagonal contour $C$ whose vertices adapt to each case, where we allow for zero-length edges. Thus, using the notation $\xi^+ = \max(\xi,0)$, the vertices of this contour are:
\[
\begin{array}{rl@{\hspace{0.5cm}}rl}
    V_1 &= ((c-t)^+, \min(c,t)), & V_2 &= (\min(c,s), (c-s)^+), \\[0.3em]
    V_3 &= (s, (c-s)^+),         & V_4 &= (s,t), \\[0.3em]
    V_5 &= ((c-t)^+, t).
\end{array}
\]

Invoking once again the exactness of the 1-form $\alpha$, we deduce that integral of this form over this closed contour must vanish. Separating the ``unknown" from the ``known" boundary paths gives:

\begin{equation}\label{integralequationK5}
\int_{(c-t)^+}^s \lambda(\sigma+t) x_\sigma(\sigma,t) \, d\sigma + \int_{(c-s)^+}^t \lambda(s+\tau) x_\tau(s,\tau) \, d\tau = \mathcal{J}_{5},
\end{equation}

where $\mathcal{J}_{5}$ represents the sum of the integrals along the  segments $V_1V_2$, $V_2V_3$, and $V_5V_1$. 

We now show that $\mathcal{J}_{5}$ is determined entirely by the prescribed initial and boundary data. 
\begin{itemize}
\item {\bf The integral over $V_1V_2$:} lies on the line $\sigma+\tau=c$ and is determined by the Cauchy data. 
    \item \textbf{The integral along $V_2V_3$:} This path lies on the line $\tau=(c-s)^+$. If $s \le c$, the path has zero length and the integral is trivially zero. If $s > c$, the path is the segment from $(c,0)$ to $(s,0)$, and the integral becomes $\int_c^s \lambda(\sigma)x_\sigma(\sigma,0)d\sigma$. This integrand here is known from the Goursat data prescribed on $\tau=0$.
    \item \textbf{The integral along $V_5V_1$:} This path is on the line $\sigma=(c-t)^+$. If $t \le c$, the path has zero length and the integral is zero. If $t > c$, the path is from $(0,t)$ to $(0,c)$, and the integral becomes $-\int_c^t \lambda(\tau)x_\tau(0,\tau)d\tau$. This integrand is known from the Goursat data on $\sigma=0$.
\end{itemize}

Equation \eqref{integralequationK5}, however, still contains derivatives of the unknown function $x$. To eliminate these and shift the derivatives from the unknown $x$ onto the known coefficient $\lambda$, we apply integration by parts along the edges $V_3V_4$ and $V_4V_5$. For clarity, consider a generic contour segment with endpoints $(\sigma_0,\tau)$ and $(\sigma_1,\tau)$ (or analogously $(\sigma,\tau_0)$ and $(\sigma,\tau_1)$ when integrating in $\tau$). Integration by parts then yields the template identities:
\begin{align*}
\int_{\sigma_0}^{\sigma_1} \lambda(\sigma+t) x_\sigma(\sigma,t) \, d\sigma = &  \lambda(\sigma_1+t)x(\sigma_1,t) -  \lambda(\sigma_0+t)x(\sigma_0,t) \\[0.5em]
& - \int_{\sigma_0}^{\sigma_1} \lambda'(\sigma+t)x(\sigma,t) \, d\sigma, \\[0.5em]
\int_{\tau_0}^{\tau_1} \lambda(s+\tau) x_\tau(s,\tau) \, d\tau = & \lambda(s+\tau_1)x(s,\tau_1) -  \lambda(s+\tau_0)x(s,\tau_0) \\[0.5em]
& - \int_{\tau_0}^{\tau_1}\lambda'(s+\tau)x(s,\tau)\, d\tau.
\end{align*}

Thus, applying this procedure to the segments $V_3V_4$ and $V_4V_5$ and then collecting terms, yields the final and unified Volterra-type integral equation for $x(s,t)$:
\begin{align}
2\lambda(s+t)x(s,t) &=  \int_{(c-t)^+}^s \lambda'(\sigma+t)x(\sigma,t)\, d\sigma 
+ \int_{(c-s)^+}^t \lambda'(s+\tau)x(s,\tau)\, d\tau \nonumber \\[0.5em]
&~\quad + \lambda(\max(c,t))x((c-t)^+,t) 
+ \lambda(\max(s,c))x(s,(c-s)^+) \nonumber \\[0.5em]
&~~\quad + \int_{(c-t)^+}^{\min(c,s)} \lambda(c)(x_\sigma+x_\tau)|_{\sigma+\tau=c}\, d\sigma \nonumber \\[0.5em]
&~~~\quad + \int_{\min(c,s)}^s \lambda(\sigma+(c-s)^+) x_\sigma(\sigma, (c-s)^+)\, d\sigma \nonumber \\[0.5em]
&~~~~\quad + \int^t_{\min(c,t)} \lambda((c-t)^+ + \tau)x_\tau((c-t)^+,\tau)\, d\tau.
\label{eq:integral_solution}
\end{align}

Lastly, we note that if $x: \Omega \to \mathbb{R}$ is a classical solution, this equation holds for any point $(s,t)$ in the domain.
\end{proof}

Among the resulting terms, those that depend only on the boundary data can be collected into a single quantity, denoted by $\mathcal{G}(s,t)$. In practice, to define this, we separate out these contributions, insert the boundary conditions from \eqref{Cauchy-Goursat-BCs}, and perform additional integrations by parts. Importantly, $\mathcal{G}$ is determined entirely by the prescribed boundary data, independent of the interior values of $x(s,t)$.
\begin{align}
\mathcal{G}(s,t) &=  \begin{cases} 
\lambda(c) f(s) & s \leq c, \\[0.5em]  
2 \lambda(s) g(s) - \lambda(c) g(c) - \int_c^s g(\sigma) \lambda'(\sigma)\, d\sigma & s > c  
\end{cases} \nonumber \\[0.5em]
&~ \quad + \begin{cases} 
\lambda(c) f(c-t) & t \leq c, \\[0.5em]  
2 \lambda(t) h(t) - \lambda(c) h(c) - \int_c^t h(\tau) \lambda'(\tau)\, d\tau & t > c  
\end{cases} \nonumber \\[0.5em]
&~~\quad + \int_{(c-t)^+}^{\min(c,s)} \lambda(c) n(\sigma)\, d\sigma.  
\label{eq:boundary_forcing}
\end{align}

\begin{remark}
\label{rem:Ck-consistency}
    If the data $\{g,h,f,n\}$ is weakly compatible (see Definition~\ref{defn:weak-compatible}), then the map
    $$
    (s,t) \mapsto \int_{(c-t)^+}^{\min(c,s)} \lambda(c) n(\sigma)\, d\sigma
    $$ 
    is continuous on $\Omega$, since $\lambda(c)$ is a constant and $n \in L^1(\Gamma_c)$. 
    It is also straightforward to verify that $\mathcal{G}$ is continuous, particularly across the lines $s=c$ and $t=c$. 
    However, $\mathcal{G}$ is in general not differentiable, even if the boundary data $\{g,h,f,n\}$ is $C^k$ compatible for every $k$, 
    i.e., obtained by restricting a $C^k$ function defined on a neighborhood of $\partial \Omega$. 

    To see why, fix $t>c$ and compute
    \[
    \mathcal{G}_s(c^-,t) = \lambda(c)\big(f'(c)+n(c)\big) \text{  and  ~}  
    \mathcal{G}_s(c^+,t) = 2\lambda(c) g'(c) - g(c)\lambda'(c).
    \]
    While $\{g,h,f,n\}$ satisfy compatibility relations, the coefficient $\lambda$ is independent. Thus $\lambda(c)$ and $\lambda'(c)$ are arbitrary, and differentiability across $s=c$ would force the additional conditions $f'(c)+n(c)-2g'(c)=0$ and $g(c)=0$. Since one may choose a $C^k$ function $x$ near $\partial \Omega$ with $g(c)=x(c,0)\neq 0$, no $k$ can guarantee that $\mathcal{G}$ is $C^1$. 
\end{remark}

\subsection{Existence and Uniqueness of Hodograph Weak Solutions}
We extend the analysis of the Cauchy--Goursat problem from classical to weak solutions on finite cylinders. The key observation is that the integral representation~\eqref{eq:integral_solution} remains valid when $x$ is only continuous on $\bar{\Omega}$ (see \eqref{DomainOmega}) This allows us to formulate a well-posed problem in a broader setting. We begin by defining what we mean by a \emph{hodograph weak solution}, and then establish existence and uniqueness through the framework of two-dimensional Volterra integral equations \cite{CourantHilbert1962,TricomibookIntegralEquations}.

\begin{definition}[Hodograph weak solution]
A continuous function $x:\Omega \to \mathbb{R}$ is called a \emph{hodograph weak solution}
to the Cauchy--Goursat problem given by~\eqref{wave-equation} and \eqref{Cauchy-Goursat-BCs} if it satisfies the integral relation~\eqref{eq:integral_solution} at every point of $\bar{\Omega}$.
\end{definition}

Having recast our PDE as the integral equation \eqref{eq:integral_solution}, we now turn to the question of existence and uniqueness of weak solutions. For this purpose, we fix a bounded rectangle
$R = [0,S] \times [0,T] \subset \mathbb{R}^2,$
and continuous functions
\(a:[0,T]\to[0,S]\) and \(b:[0,S]\to[0,T]\). The integral relation~\eqref{eq:integral_solution} can be recast as a
two-dimensional Volterra integral equation of the second kind \cite{TricomibookIntegralEquations}, for which the following theorem applies.

\begin{theorem}
\label{thm:existence-uniqueness}
Let $x$ satisfy the two-dimensional Volterra integral equation of the second kind
\[
x(s,t) = \mathcal{F}(s,t)
+ \int_{a(t)}^{s} K_1(s,t,\sigma)\,x(\sigma,t)\,d\sigma
+ \int_{b(s)}^{t} K_2(s,t,\tau)\,x(s,\tau)\,d\tau,
\]
for $(s,t)\in \Omega$.
If the forcing term $\mathcal{F}$ is continuous on $ \Omega$,
and the kernels $K_1,K_2$ are continuous on the compact set
\[
\{(s,t,\sigma,\tau)\mid (s,t)\in  \Omega, \; a(t)\leq\sigma\leq s,\; b(s)\leq\tau\leq t\},
\]
then there exists a unique continuous solution $x$ on $ \Omega$.
\end{theorem}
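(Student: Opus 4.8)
The plan is to prove Theorem~\ref{thm:existence-uniqueness} by the classical Picard iteration / successive approximation method, adapted to the two-dimensional Volterra setting. First I would set up the iteration: define $x_0(s,t) = \mathcal{F}(s,t)$ and, recursively,
\[
x_{n+1}(s,t) = \mathcal{F}(s,t) + \int_{a(t)}^{s} K_1(s,t,\sigma)\,x_n(\sigma,t)\,d\sigma + \int_{b(s)}^{t} K_2(s,t,\tau)\,x_n(s,\tau)\,d\tau.
\]
Since $\Omega$ is contained in the bounded rectangle $R = [0,S]\times[0,T]$ and the kernels $K_1,K_2$ are continuous on the indicated compact set, there is a uniform bound $M = \max(\|K_1\|_\infty, \|K_2\|_\infty)$, and each $x_n$ is continuous on $\bar\Omega$ by induction (continuity of parameter-dependent integrals with continuous integrands and endpoints). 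The goal is to show $(x_n)$ converges uniformly; its limit is then the desired continuous solution.

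The heart of the argument is the estimate on the differences $d_n := x_{n+1} - x_n$. Subtracting consecutive iterates gives
\[
d_n(s,t) = \int_{a(t)}^{s} K_1(s,t,\sigma)\,d_{n-1}(\sigma,t)\,d\sigma + \int_{b(s)}^{t} K_2(s,t,\tau)\,d_{n-1}(s,\tau)\,d\tau.
\]
The standard trick for two-dimensional Volterra equations is to bound $|d_n|$ by a quantity involving the ``taxicab'' variable $\xi = s + t$ (or the area $st$). Setting $D_0 = \|d_0\|_\infty = \|x_1 - x_0\|_\infty < \infty$, an induction shows $|d_n(s,t)| \le D_0\, \frac{(2M(s+t))^n}{n!}$ (or a similar bound with $M^n (s+t)^n / n!$ up to constants), because each application of the integral operator picks up a factor of $M$ and integrates a monomial in $\sigma + t$ or $s + \tau$ of degree $n-1$, raising the degree by one and dividing by the new degree. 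Summing this geometric-factorial series over $n$ shows $\sum_n \|d_n\|_\infty \le D_0\, e^{2MS'}$ where $S' = S + T$, so $(x_n)$ is uniformly Cauchy and converges uniformly on $\bar\Omega$ to a continuous limit $x$. Passing to the limit under the integral sign (justified by uniform convergence and boundedness of the kernels) shows $x$ solves the integral equation.

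For uniqueness, suppose $x$ and $\tilde x$ are two continuous solutions and set $e = x - \tilde x$; then $e$ satisfies the same homogeneous-forcing fixed-point relation, and the identical induction gives $|e(s,t)| \le \|e\|_\infty \frac{(2M(s+t))^n}{n!}$ for every $n$, which forces $e \equiv 0$ by letting $n \to \infty$. Finally, I would remark that the integral relation~\eqref{eq:integral_solution} is precisely of this form: dividing by $2\lambda(s+t)$ (legitimate since $\lambda \ne 0$), the kernels $K_1(s,t,\sigma) = \lambda'(\sigma+t)/(2\lambda(s+t))$ and $K_2(s,t,\tau) = \lambda'(s+\tau)/(2\lambda(s+t))$ are continuous because $\lambda \in C^1$ and bounded away from zero on the compact parameter set, the lower limits $a(t) = (c-t)^+$ and $b(s) = (c-s)^+$ are continuous, and the boundary forcing $\mathcal{G}(s,t)/(2\lambda(s+t))$ together with the Goursat integral terms assembles into a continuous $\mathcal{F}$ whenever the data is weakly compatible (invoking Remark~\ref{rem:Ck-consistency}).

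The main obstacle I anticipate is not any single hard step but the bookkeeping in the two-dimensional induction: one must choose the right majorizing function of $(s,t)$ so that \emph{both} integral terms are simultaneously controlled and the recursion closes cleanly. Using $s+t$ as the driving variable (rather than $s$ or $t$ separately) is the key choice that makes the factorial denominators appear and the series converge; getting the constant $2M$ (versus $M$) right in the exponent, and verifying the base case carefully, is where the care is needed. Everything else — continuity of the iterates, the limit exchange, uniqueness — is routine once this estimate is in hand.
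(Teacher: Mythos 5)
Your proposal is correct and follows essentially the same route as the paper: successive approximations with the inductive bound $|x_{n+1}-x_n| \le L\,(2M(s+t))^n/n!$ driven by the variable $s+t$, uniform convergence via the exponential series, and uniqueness by iterating the homogeneous relation until the factorial kills the bound. The only cosmetic difference is your choice of initial iterate $x_0=\mathcal{F}$ versus the paper's arbitrary $x_0\in C(\Omega)$, which does not affect the argument.
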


\begin{remark}
For our setup, all four cases (see Figure~\ref{fig:CaseContours}) are covered by taking 
\(a(t) = (c-t)^+\), \(b(s) = (c-s)^+\), 
and making the identifications
\begin{equation}\label{DefFK1K2}
\mathcal{F}(s,t) = \frac{\mathcal{G}(s,t)}{2\lambda(s+t)}, 
~~K_1(s,t,\sigma) = \frac{\lambda'(\sigma+t)}{2\lambda(s+t)}, 
~~ K_2(s,t,\tau) = \frac{\lambda'(s+\tau)}{2\lambda(s+t)},
\end{equation}
where $\mathcal{G}$ is defined in \eqref{eq:boundary_forcing}.
Note that $K_1$ and $K_2$ are not bounded on the full domain 
\(0\leq\sigma\leq S,\,0\leq\tau\leq t\leq T\), since for instance, they diverge along $\sigma=s$ as $s+t\to 0$. 
To avoid this singular behavior at the origin, we (initially) restrict 
attention to compact subdomains of $R\setminus\{\mathbf{0}\}$, such as the truncated-cylinder domain $\Omega$ (see \eqref{DomainOmega}). Here and henceforth, we will denote the origin $(0,0)$ by $\mathbf{0}$.
\end{remark}

\begin{proof}
The proof proceeds by the method of successive approximations \cite{Kress2014}.

\medskip
\noindent \textbf{Setup: } We work in the Banach space $C(\Omega)$ of continuous functions on the compact domain $\Omega$, equipped with the supremum norm $\| \cdot \|_\infty$. Define the operator $\mathcal{A}: C(\Omega) \to C(\Omega)$ by
\begin{equation}
\label{eq:T_op}
 (\mathcal{A}x)(s,t) = \mathcal{F}(s,t) + \int_{a(t)}^{s} K_1(s,t,\sigma)\,x(\sigma,t)\,d\sigma
 + \int_{b(s)}^{t} K_2(s,t,\tau)\,x(s,\tau)\,d\tau.
\end{equation}
Since $f,~K_1,~K_2,~a,$ and $b$ are continuous and $\Omega$ is compact, their domains of definition are compact as well. In particular, $K_1$ and $K_2$ are bounded, so $\mathcal{A}$ maps continuous functions to continuous functions. We also define the Volterra operator
\begin{equation}
\label{eq:K_op}
 (\mathcal{A}x)(s,t) = \int_{a(t)}^{s} K_1(s,t,\sigma)\,x(\sigma,t)\,d\sigma
 + \int_{b(s)}^{t} K_2(s,t,\tau)\,x(s,\tau)\,d\tau.
\end{equation}
so that $\mathcal{A} x  = \mathcal{F} + \mathcal{K} x$

To apply fixed-point theory, we must show that $\mathcal{A}$ is continuous. Let $x,y \in C(\Omega)$ and set $M = \max\{\sup|K_1|, \sup|K_2|\}$. Then
\begin{align*}
|(\mathcal{A}x -\mathcal{A}y)(s,t)| & = |(\mathcal{K}x -\mathcal{K}y)(s,t)| \\[0.5em]
&\leq \int_{a(t)}^s |K_1(s,t,\sigma)|\,|x(\sigma,t) - y(\sigma,t)|\,d\sigma \\[0.5em]
&~~~~+ \int_{b(s)}^t |K_2(s,t,\tau)|\,|x(s,\tau) - y(s,\tau)|\,d\tau \\[0.5em]
&\leq M \|x-y\|_\infty \left((s - a(t)) + (t - b(s))\right) \\[0.5em]
&\leq M(S+T)\,\|x-y\|_\infty.
\end{align*}

Taking the supremum over $(s,t) \in \Omega$ gives
\[
\|\mathcal{A}x - \mathcal{A}y\|_\infty \leq M(S+T)\,\|x-y\|_\infty,
\]
so $\mathcal{A}$ is a continuous operator on $C(\Omega)$. Thus, a solution of the integral equation \eqref{eq:T_op} corresponds to a fixed point of $\mathcal{A}$, i.e., $x = \mathcal{A}x$.

\medskip
\noindent \textbf{The Iterative Sequence:}
Let $x_0(s,t) \in C(\Omega)$ be an arbitrary continuous function. We generate a sequence $(x_n)_{n=0}^\infty$ by the recurrence
\[
x_{n+1}(s,t) = (\mathcal{A}x_n)(s,t).
\]
To analyze convergence, we write $x_n$ in the series representation form
\[
x_n = x_0 + \sum_{k=0}^{n-1} (x_{k+1} - x_k).
\]
Defining $u_k = x_{k+1} - x_k$, the convergence of $(x_n)$ is equivalent to the convergence of the series $\sum_{k=0}^\infty u_k$.
In particular, $u_k$ satisfies the recurrence
\begin{align*}
u_k(s,t) = &(\mathcal{K}x_k)(s,t) - (\mathcal{K}x_{k-1})(s,t)  \\[0.5em]
= &\int_{a(t)}^s K_1(s,t,\sigma)\, u_{k-1}(\sigma,t)\, d\sigma
+ \int_{b(s)}^t K_2(s,t,\tau)\, u_{k-1}(s,\tau)\, d\tau.
\end{align*}

\medskip
\noindent  \textbf{Deriving the Bounding Estimates:} We show, by induction, that the following bound holds:
\begin{equation}\label{indbound}
|u_k(s,t)| \leq    L \frac{(2M(s+t))^k}{k!}, \qquad k \geq 0,
\end{equation}
where $L = ||u_0||_\infty = \sup_{(s,t) \in \Omega} |x_1(s,t) - x_0(s,t)|$. 

For the base case $k=0$, the bound $|u_0(s,t)| \le L$ holds by definition of $L$. We now prove the inductive step for $k \ge 1$. Assuming the bound holds for $k-1$, we have:
\begin{align*}
    |u_k(s,t)| &\leq \int_{a(t)}^s M |u_{k-1}(\sigma,t)| \, d\sigma + \int_{b(s)}^t M |u_{k-1}(s,\tau)|\, d\tau \\[0.5em]
    &\leq \int_0^s ML \frac{(2M(\sigma+t))^{k-1}}{{(k-1)}!} \, d\sigma + \int_0^t M L \frac{(2M(s+\tau))^{k-1}}{{(k-1)}!} \, d\tau \\[0.5em]
    &= \frac{2^{k-1}M^k L}{k!} \left( (s+t)^k - t^k + (s+t)^k - s^k \right) \\[0.5em]
    &\leq L \frac{(2M(s+t))^k}{k!}.
    \end{align*}

Thus, for all $k \geq 0$, we have the uniform bound
$$ ||u_k||_\infty \leq L \frac{(2M(S+T))^k}{k!}.$$

\medskip
\noindent  \textbf{Convergence and Uniqueness:} The series $\sum_{k=0}^\infty ||u_k||_\infty$ is dominated by the convergent series:
$$ \sum_{k=0}^\infty L \frac{(2M(S+T))^k}{k!} = L \exp(2M(S+T)).$$
By the Weierstrass M-test, $\sum u_k$ converges absolutely and uniformly on $\Omega$. Since $C(\Omega)$ is a complete space, the sequence of partial sums $(x_n)$ converges uniformly to a limit $x \in C(\Omega)$.

To show that $x$ is indeed a solution, we consider the sequence $x_{n+1} = \mathcal{A}x_n$. Since the integral operator $\mathcal{A}$ is continuous with respect to the supremum norm, we have:
$$ x = \lim_{n\to\infty} x_{n+1} = \lim_{n\to\infty}\mathcal{A}x_n = \mathcal{A}\left(\, \lim_{n\to\infty} x_n\right) = \mathcal{A}x.$$
Thus, $x$ is a fixed point for $\mathcal{A}$ and a solution to the integral equation.

Finally, to prove uniqueness, let $x$ and $y$ be two continuous solutions on $\Omega$, and let $v = x-y$. Then $v$ must satisfy the homogeneous equation:
$$ v(s,t) = \int_{a(t)}^{s} K_1(s,t,\sigma)v(\sigma,t)d\sigma + \int_{b(s)}^{t} K_2(s,t,\tau)v(s,\tau)d\tau,$$
where $K_1$ and $K_2$ as in \eqref{DefFK1K2}.

Since $v =\mathcal{K}v$, it implies that $v = \mathcal{K}^k v$ for any integer $k \geq 1$. We then apply the same bounding argument as in \eqref{indbound} to the iterates of $\mathcal{K}$ on $v$ to obtain:
$$ |v(s,t)| = |(\mathcal{K}^k v)(s,t)| \leq V \frac{(2M(s+t))^k}{k!}.$$
Denoting $V = \|v\|_\infty$ and taking the supremum over $(s,t) \in \Omega$ on both sides gives:
$$ \|v\|_\infty \leq\|v\|_\infty \frac{(2M(S+T))^k}{k!}.$$

Let $\Lambda_k=\frac{(2M(S+T))^k}{k!}$. As $k \to \infty$, $\Lambda_k \to 0$. In particular, there is a finite $k$, sufficiently large, such that $\Lambda_k < \frac{1}{2}$. Therefore, $\|v\|_\infty = 0$ and $x=y$, which proves that the solution is unique.
\end{proof}

Recall the definition of the operator $\mathcal{K}$ given by \eqref{eq:T_op}. 
Substituting the explicit expressions of $\mathcal{F},~K_1,~K_2$ from \eqref{DefFK1K2} gives the following representation for a continuous function $u \in C(\bar{\Omega})$:
\begin{align}
\label{eq:T_simplified}
(\mathcal{A}u)(s,t) 
:= \frac{1}{2\lambda(s+t)} \Big( \mathcal{G}(s,t) 
&+ \int_{(c-t)^+}^s \lambda'(\sigma+t)u(\sigma,t)\, d\sigma \\[0.5em]
&+ \int_{(c-s)^+}^t \lambda'(s+\tau)u(s,\tau)\, d\tau \Big).\nonumber
\end{align}

where $\mathcal{G}(s,t)$ is the forcing term determined by the weakly compatible boundary data $\{g,h,f,n\}$ (see Definition~\ref{defn:weak-compatible}).

We also define the normal derivative operator along the Cauchy segment $\Gamma_C$ by
\begin{equation}
\label{eq:Dn_def}
D_n x(\xi) := \lim_{z \to 0^+} \frac{x(s_z, t_z) - x(\xi, c-\xi)}{z}, 
~~(s_z, t_z) = (\xi+z, c-\xi+z),
\end{equation}
for a.e.\ $\xi \in (0,c)$.

Our goal now is to highlight a key property of the operator $\mathcal{A}$. 
The next lemma shows that $\mathcal{A}$ acts as an ``improvement'' operator for boundary conditions: 
starting from an input $x_0$ that satisfies only the Dirichlet data on $\partial \Omega$, 
the output $x_1 = \mathcal{A}x_0$ automatically also satisfies the normal derivative condition 
along the Cauchy segment $\Gamma_C$. This shows that enforcement of the derivative 
condition is built into the structure of $\mathcal{A}$, rather than requiring successive iterations.

\begin{lemma}
\label{lem:improvement}
Let $x_0 \in C(\overline{\Omega})$ be a continuous function that satisfies the Dirichlet boundary conditions $\{g,h,f\}$ on $\partial \Omega$. Then the function $x_1 = \mathcal{A}x_0$, defined by \eqref{eq:T_simplified}, not only agrees with $\{g,h,f\}$ on $\partial \Omega$ but also satisfies the normal derivative condition
\[
D_n x_1(\xi) = n(\xi) \quad \text{for almost  everywhere\ } \xi \in \Gamma_C.
\]
\end{lemma}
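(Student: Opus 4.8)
\emph{Overall strategy and the Dirichlet part.} I would verify the two assertions — attainment of the Dirichlet data $\{g,h,f\}$ on $\partial\Omega$ and attainment of the normal-derivative datum $n$ on $\Gamma_C$ — separately, in both cases by direct substitution into the explicit formula \eqref{eq:T_simplified} for $\mathcal{A}$ together with the explicit form \eqref{eq:boundary_forcing} of $\mathcal{G}$. For the Dirichlet part, restrict $x_1=\mathcal{A}x_0$ to each boundary segment. On $\Gamma_C$, where $s+t=c$, the key point is that both integral terms in \eqref{eq:T_simplified} have vanishing length since $(c-t)^+=s$ and $(c-s)^+=t$; what survives is $\mathcal{G}(s,c-s)/(2\lambda(c))$, and the two $f$-branches of \eqref{eq:boundary_forcing} combine to $2\lambda(c)f(s)$ while the $n$-integral runs over an interval of zero length, so $x_1|_{\Gamma_C}=f$ (this holds for \emph{any} input $x_0$). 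On $\Gamma_1$ (the case $t=0$, $s\ge c$; $\Gamma_2$ is symmetric) one integral is again zero-length, while the other equals $\int_c^s\lambda'(\sigma)x_0(\sigma,0)\,d\sigma=\int_c^s\lambda'(\sigma)g(\sigma)\,d\sigma$ and cancels the $\int_c^s g\lambda'$ term in the $s>c$ branch of $\mathcal{G}$; the remaining boundary terms at $\sigma=c$ cancel precisely because $C^0$-compatibility (Definition~\ref{defn:weak-compatible}) gives $g(c)=f(c)$ (resp. $h(c)=f(0)$ on $\Gamma_2$), leaving $x_1|_{\Gamma_1}=g$. Here the hypothesis that $x_0$ already carries the Dirichlet data is used.

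\emph{Set-up for the normal derivative.} Fix $\xi\in(0,c)$ and approach $\Gamma_C$ along the path $(s_z,t_z)=(\xi+z,\,c-\xi+z)$, $z>0$, as in the definition \eqref{eq:Dn_def}. For $z$ small enough one has $0<s_z<c$ and $0<t_z<c$ (the ``Case~IV'' geometry of Figure~\ref{fig:CaseContours}), so that $(c-t_z)^+=\xi-z$ and $(c-s_z)^+=c-\xi-z$. Substituting into \eqref{eq:T_simplified}--\eqref{eq:boundary_forcing} gives $x_1(s_z,t_z)=A(z)/B(z)$ with $B(z)=2\lambda(c+2z)$ and
\[
A(z)=\lambda(c)\big(f(\xi+z)+f(\xi-z)\big)+\lambda(c)\int_{\xi-z}^{\xi+z}n(\sigma)\,d\sigma+I_1(z)+I_2(z),
\]
where $I_1(z)=\int_{\xi-z}^{\xi+z}\lambda'(\sigma+t_z)\,x_0(\sigma,t_z)\,d\sigma$ and $I_2(z)=\int_{c-\xi-z}^{c-\xi+z}\lambda'(s_z+\tau)\,x_0(s_z,\tau)\,d\tau$ are ``thin'' integrals over intervals of length $2z$ centred at $\xi$ and $c-\xi$. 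At $z=0$ this recovers $x_1(\xi,c-\xi)=f(\xi)=A(0)/B(0)$ from the first part, so the difference quotient can be written as $\tfrac1z\big(A(z)/B(z)-A(0)/B(0)\big)=\big(B(0)\tfrac{A(z)-A(0)}{z}-A(0)\tfrac{B(z)-B(0)}{z}\big)\big/\big(B(z)B(0)\big)$.

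\emph{The limit.} It remains to pass to $z\to0^+$ in each piece, invoking only the regularity in Definition~\ref{defn:weak-compatible}: (i) $\tfrac{f(\xi+z)+f(\xi-z)-2f(\xi)}{z}\to0$ at every point of differentiability of the absolutely continuous $f$, hence for a.e.\ $\xi$; (ii) $\tfrac1z\int_{\xi-z}^{\xi+z}n\to2n(\xi)$ at every Lebesgue point of $n\in L^1(\Gamma_C)$, hence for a.e.\ $\xi$; (iii) $\tfrac{I_j(z)}{z}\to2\lambda'(c)f(\xi)$ for $j=1,2$; and (iv) $\tfrac{B(z)-B(0)}{z}\to4\lambda'(c)$ since $\lambda\in C^1$. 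Assembling these with $B(0)=2\lambda(c)$, the two contributions $2\lambda'(c)f(\xi)$ from (iii) combine with the $A(0)\tfrac{B(z)-B(0)}{z}$ term to cancel exactly, and one is left with $4\lambda(c)^2 n(\xi)\big/(2\lambda(c))^2=n(\xi)$; that is, $D_n x_1(\xi)=n(\xi)$ for a.e.\ $\xi\in\Gamma_C$, which together with the Dirichlet part proves the lemma.

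\emph{The main obstacle.} The one genuinely non-routine step is (iii): the integrands of $I_1,I_2$ are not fixed functions but depend on the vanishing parameter $z$ (through $t_z$, resp. $s_z$), so one cannot simply quote a mean-value or Lebesgue-differentiation statement. I would handle this by writing the integrand as its $z=0$ value plus a remainder, bounding $\tfrac1z\int_{\xi-z}^{\xi+z}$ of the remainder by twice its supremum over a fixed compact neighbourhood (which tends to $0$ by uniform continuity of $\lambda'$ and of $x_0$ there), and evaluating $\tfrac1z\int_{\xi-z}^{\xi+z}$ of the continuous $z=0$ integrand by continuity; at $\sigma=\xi$ (resp. $\tau=c-\xi$) that value is $\lambda'(c)x_0(\xi,c-\xi)=\lambda'(c)f(\xi)$, using once more that $x_0$ carries the Cauchy Dirichlet datum. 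The low regularity of the data ($f$ only absolutely continuous, $n$ only $L^1$) is precisely why steps (i)--(ii) — and hence the conclusion — are necessarily almost-everywhere statements, consistent with the formulation of the lemma.
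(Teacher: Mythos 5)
Your proposal is correct and follows essentially the same route as the paper: the same path $(s_z,t_z)=(\xi+z,c-\xi+z)$, the same split of $x_1(s_z,t_z)$ into the $\mathcal{G}$ contribution plus the two thin integrals $I_1,I_2$, the Lebesgue differentiation theorem for $n$, the vanishing symmetric difference quotient of $f$ at points of differentiability, and the exact cancellation of the $\lambda'(c)f(\xi)$ terms. Your treatment is, if anything, slightly more careful than the paper's on the Dirichlet verification and on the $z$-dependence of the integrands in step (iii), but these are refinements of the same argument rather than a different approach.
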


\begin{proof}
Let $x_0 \in C(\bar{\Omega})$ satisfying the following conditions for $~0 \le s \le c$:
$$x_0(s,0) = g(s), ~s \ge c,~~~x_0(0,t) = h(t),~ t \ge c,~~~ x_0(s, c-s) = f(s).$$ 

We show that $x_1 = \mathcal{A}x_0$ preserves these conditions and, in addition, satisfies the normal derivative condition on $\Gamma_C$. It follows directly from the definition of $\mathcal{A}$ that $x_1$ satisfies the Dirichlet boundary conditions on $\partial \Omega$.

To verify the normal derivative condition on $\Gamma_C$, fix $\xi \in (0,c)$. 
Recalling the definition of the normal derivative operator \eqref{eq:Dn_def}, we consider
\[
D_n x_1(\xi) = \lim_{z \to 0^+} \frac{x_1(s_z,t_z) - x_1(\xi, c-\xi)}{z},
\]
and aim to show that this limit exists and equals $n(\xi)$ for a.e. $\xi \in (0,c)$.

From the definition of the operator $\mathcal{A}$, and noting that $0 < s_z, t_z < c$, we obtain
\[
x_1(s_z, t_z) = \frac{1}{2\lambda(c+2z)} \Big( \mathcal{G}(s_z, t_z) + I_1(z) + I_2(z) \Big),
\]
where we have set
\[
I_1(z) = \int_{\xi-z}^{\xi+z} \lambda'(\sigma+t_z)\, x_0(\sigma,t_z)\, d\sigma,
\quad 
I_2(z) = \int_{c-\xi-z}^{c-\xi+z} \lambda'(s_z+\tau)\, x_0(s_z,\tau)\, d\tau.
\]

Since the integrands of $I_1$ and $I_2$ are continuous in $z$, it follows that $I_1(0) = I_2(0) = 0$.  
Consequently, substituting into the definition of the normal derivative operator gives
\[
D_n x_1(\xi) 
= \frac{\lambda(c) D_n[\mathcal{G}(s_z,t_z)] - 2 \lambda'(c)\mathcal{G}(\xi,c-\xi)}{2\lambda(c)^2}
+ \frac{1}{2\lambda(c)} \left. \frac{\partial}{\partial z}\big( I_1(z) + I_2(z) \big)\right|_{z=0}.
\]

Also, by continuity of the integrands, the last term simplifies to
\[
\left. \frac{\partial}{\partial z}\big( I_1(z) + I_2(z) \big) \right|_{z=0} 
= 4 \lambda'(c) x_0(\xi, c-\xi) 
= 4 \lambda'(c) f(\xi).
\]

For the term involving $\mathcal{G}$, note from \eqref{eq:boundary_forcing} and the fact that 
$s_z, t_z < c$ that
\begin{align*}
\mathcal{G}(s_z,t_z) 
= & \lambda(c)\Big( f(s_z) + f(c-t_z) + \int_{c-t_z}^{s_z} n(\sigma)\, d\sigma \Big)\\[0.5em]
= & \lambda(c)\Big( f(\xi+z) + f(\xi-z) + \int_{\xi-z}^{\xi+z} n(\sigma)\, d\sigma \Big).
\end{align*}

By the Lebesgue differentiation theorem, and since $n \in L^1(\Gamma_C)$, the following holds for almost every \ $\xi \in (0,c)$.
\[
D_n[\mathcal{G}(\xi)] 
= \lim_{z \to 0^+} \frac{\lambda(c)\big(f(\xi+z)+f(\xi-z)-2f(\xi)\big)}{z} 
+ 2\lambda(c) n(\xi).
\]

Combining this with the previous computation, we obtain
\begin{align*}
D_n x_1(\xi) 
= \frac{1}{2\lambda(c)} \Bigg(
    &\lim_{z \to 0^+} \frac{\lambda(c)\big(f(\xi+z)+f(\xi-z)-2f(\xi)\big)}{z}\\[0.5em]
    &~~+ 2\lambda(c)n(\xi) - 4\lambda'(c)f(\xi) + 4\lambda'(c)f(\xi)
\Bigg).
\end{align*}

The last two terms cancel, leaving
\[
D_n x_1(\xi) 
= \lim_{z \to 0^+} \frac{f(\xi+z)+f(\xi-z)-2f(\xi)}{2z} + n(\xi),
\]
which holds for almost every $\xi \in (0,c)$.
 
At any point $\xi$ where $f'(\xi)$ exists, the limit term can be rewritten as
\begin{align*}
\lim_{z \to 0^+} \frac{f(\xi+z)+f(\xi-z)-2f(\xi)}{2z} 
&= \frac{1}{2}\lim_{z \to 0^+} \frac{f(\xi+z)-f(\xi)}{z} \\[0.5em]
&~\quad + \frac{1}{2}\lim_{z \to 0^+} \frac{f(\xi-z)-f(\xi)}{z}.
\end{align*}
Both limits exist and equal $f'(\xi)$, so the expression:
\[
\lim_{z \to 0^+} \frac{f(\xi+z)+f(\xi-z)-2f(\xi)}{2z} = 0,  \]
leaving behind $D_n x_1(\xi) = n(\xi)$.
\end{proof}

The improvement property of $\mathcal{A}$ developed in Lemma~\ref{lem:improvement} plays a central role in the analysis. It ensures that the boundary conditions are precisely encoded in the operator framework. We now turn to the main proposition of this section, which establishes both the existence of a unique solution and its attainment of the full Cauchy--Goursat boundary conditions.

\begin{proposition}
\label{prop:solution_attains_bcs}
Let the Cauchy--Goursat data $\{g,h,f,n\}$ be weakly compatible. 
Then the associated Volterra integral equation
\[
x(s,t) = \mathcal{F}(s,t) 
+ \int_{(c-t)^+}^s K_1(s,t,\sigma)\,x(\sigma,t)\,d\sigma 
+ \int_{(c-s)^+}^t K_2(s,t,\tau)\,x(s,\tau)\,d\tau .
\]
possesses a unique continuous solution $x \in C(\overline{\Omega})$. 
Moreover, the solution attains the prescribed Cauchy--Goursat boundary conditions $\{g,h,f,n\}$.
\end{proposition}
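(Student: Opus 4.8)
The proof assembles three results already established in this section: the abstract Volterra existence--uniqueness Theorem~\ref{thm:existence-uniqueness}, the continuity of the forcing term $\mathcal{G}$ recorded in Remark~\ref{rem:Ck-consistency}, and the improvement Lemma~\ref{lem:improvement}. \textbf{Step 1 (existence and uniqueness).} With the identifications \eqref{DefFK1K2}, I would first verify the hypotheses of Theorem~\ref{thm:existence-uniqueness}: since $\lambda \in C^1$ and $\lambda \neq 0$, the functions $(s,t)\mapsto \lambda(s+t)$ and $(s,t,\sigma)\mapsto \lambda'(\sigma+t)$ are continuous and $\lambda$ is bounded away from zero on any compact subdomain, so $K_1, K_2$ are continuous on the compact parameter set of the theorem; continuity of $\mathcal{F}=\mathcal{G}/(2\lambda(s+t))$ follows from the continuity of $\mathcal{G}$ noted in Remark~\ref{rem:Ck-consistency}, which uses only $g(c)=f(c)$, $h(c)=f(0)$ and $n\in L^1(\Gamma_C)$ from weak compatibility. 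Applying Theorem~\ref{thm:existence-uniqueness} (on $\overline{\Omega}$, or, if one insists on the literal unbounded $\Omega$, on each truncation $\overline{\Omega}\cap[0,S]\times[0,T]$ and patching the solutions together by uniqueness) yields a unique $x\in C(\overline{\Omega})$ with $x=\mathcal{A}x$, where $\mathcal{A}$ is the operator \eqref{eq:T_simplified}.

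\textbf{Step 2 (the Dirichlet data are attained).} The delicate point is that Lemma~\ref{lem:improvement} requires an input that already realizes $\{g,h,f\}$, so one must first show that the fixed point $x$ does. I would argue by an invariant-subspace observation. Let $X_D\subset C(\overline{\Omega})$ be the closed, nonempty affine subset of continuous functions whose restrictions to $\Gamma_1,\Gamma_2,\Gamma_C$ equal $g,h,f$ respectively. Inspecting \eqref{eq:T_simplified} together with the explicit form \eqref{eq:boundary_forcing} of $\mathcal{G}$ and the compatibility relations $g(c)=f(c)$, $h(c)=f(0)$, one checks that $\mathcal{A}$ maps $X_D$ into itself: for example on $\Gamma_1$, where $t=0$ forces $(c-t)^+=c$, $(c-s)^+=0$, the $\tau$-integral vanishes, and substituting $x_0|_{\Gamma_1}=g$ into the $\sigma$-integral while using the collapsed expression for $\mathcal{G}(s,0)$ gives $(\mathcal{A}x_0)(s,0)=g(s)$; the computations on $\Gamma_2$ and on the Cauchy segment $\Gamma_C$ are analogous. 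Since $X_D$ is a closed subset of the complete space $C(\overline{\Omega})$ and $\mathcal{A}$ restricts to $X_D\to X_D$, the successive-approximation argument of Theorem~\ref{thm:existence-uniqueness} runs verbatim inside $X_D$ and produces a fixed point there; by the uniqueness already obtained in $C(\overline{\Omega})$ this must be the same $x$, so $x\in X_D$, i.e. $x$ realizes the Dirichlet data $\{g,h,f\}$.

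\textbf{Step 3 (the normal derivative is attained).} I now apply Lemma~\ref{lem:improvement} with $x_0:=x$. Since $x$ satisfies the Dirichlet conditions, the lemma yields that $x_1:=\mathcal{A}x_0$ satisfies $D_n x_1(\xi)=n(\xi)$ for almost every $\xi\in\Gamma_C$. But $x_1=\mathcal{A}x=x$, hence $D_n x(\xi)=n(\xi)$ a.e. on $\Gamma_C$. Combining Steps~2 and~3, $x$ attains the full weakly compatible Cauchy--Goursat data $\{g,h,f,n\}$ in the sense of Definition~\ref{defn:attainment}, which completes the proof.

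\textbf{Main obstacle.} The one genuinely non-mechanical step is Step~2: one cannot read off attainment of the Dirichlet data directly from the fixed-point equation, because $\mathcal{A}$ does not send arbitrary continuous functions to functions realizing $\{g,h,f\}$ — only those that already do. Hence the invariant-subspace argument (equivalently, a one-dimensional Volterra uniqueness argument along each boundary segment, using the compatibility relations) is essential. Everything else is bookkeeping: the only care required elsewhere is confirming that $\lambda$ bounded away from zero on the relevant compact domain keeps $\mathcal{F},K_1,K_2$ continuous, so that Theorem~\ref{thm:existence-uniqueness} applies.
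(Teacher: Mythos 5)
Your proposal is correct and follows essentially the same route as the paper: existence and uniqueness via Theorem~\ref{thm:existence-uniqueness} on truncations, attainment of the Dirichlet data by showing $\mathcal{A}$ preserves the set of functions realizing $\{g,h,f\}$ (the paper phrases your invariant-subspace argument as an induction on the iterates $x_k=\mathcal{A}x_{k-1}$ starting from a Tietze extension of the compatible boundary data, then passes to the uniform limit), and finally the normal-derivative condition by applying Lemma~\ref{lem:improvement} to the fixed point itself via $\mathcal{A}x=x$. The only point to make explicit is the nonemptiness of your set $X_D$, which the paper secures with the Tietze extension theorem.
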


\begin{proof}
The proof proceeds in two main steps: establishing the existence of a unique solution and demonstrating that this solution attains the boundary conditions.

\medskip
\noindent \textbf{Existence and uniqueness.} 
Given weakly compatible data, the forcing term which is defined by
\[
\mathcal{F}(s,t) = \frac{\mathcal{G}(s,t)}{2\lambda(s+t)},
\]
is continuous on any compact subdomain of the form
\[
\bar{\Omega} \cap [0,S]\times[0,T] 
= \{(s,t)\mid 0\leq s \leq S,\; 0\leq t \leq T,\; s+t \geq c >0\}.
\]
The kernels \(K_1\) and \(K_2\) are likewise continuous on their respective domains.  

By Theorem.~\ref{thm:existence-uniqueness}, there exists a unique continuous solution \(x_{S,T}\) on each rectangle \([0,S]\times[0,T]\) intersected with \(\bar{\Omega}\). 
Exhausting \(\bar{\Omega}\) by such rectangles establishes the existence of a unique continuous solution \(x \in C(\bar{\Omega})\). 
This solution is the fixed point of the operator \(\mathcal{A}\), i.e., \(x = \mathcal{A}x\).

\medskip

\noindent \textbf{Attainment of boundary conditions.} 
To show that the unique solution $x$ satisfies the prescribed boundary conditions, we examine the sequence of successive approximations used to construct it.

We begin by constructing a suitable initial guess $x_0$. By weak compatibility, there exists a continuous boundary function on $\partial \Omega$ that agrees with the Dirichlet data $\{g,h,f\}$. By the Tietze extension theorem, this extends to a continuous function $x_0 \in C(\bar{\Omega})$ agreeing with $\{g,h,f\}$ on $\partial\Omega$.

Next, define the sequence $x_k = \mathcal{A}x_{k-1}$ for $k \geq 1$. By Theorem~\ref{thm:existence-uniqueness}, $x_k \to x$ uniformly on compact subdomains. Moreover, since $x_0$ agrees with $\{g,h,f\}$ on $\partial\Omega$, Lemma~\ref{lem:improvement} and induction guarantee that every iterate $x_k$ also satisfies the Dirichlet conditions.

Passing to the limit, we conclude that $x$ itself satisfies the Dirichlet conditions. For instance, at any boundary point $(s,0)$ with $s \ge c$, 
\[
x(s,0) = \lim_{k\to\infty} x_k(s,0) = \lim_{k\to\infty} g(s) = g(s),
\] 
and the same holds for $\Gamma_2$ and $\Gamma_C$.

Finally, since the unique solution $x$ satisfies the Dirichlet conditions, Lemma~\ref{lem:improvement} applies to $x$ itself. The lemma asserts that $\mathcal{A}x$ satisfies the normal derivative condition. But $x$ is the unique fixed point of $\mathcal{A}$, so $\mathcal{A}x = x$. Hence, 
\[
D_n x(\xi) = n(\xi) \quad \text{for a.e. } \xi \in [0,c].
\]

This completes the proof that the unique solution to the Volterra equation attains all prescribed Cauchy--Goursat boundary conditions.
\end{proof}

\section{Singularities and Their Resolution} \label{sec:singularities}

A central theme in the theory of PDEs is that solution regularity depends not only on the smoothness of the data but also on the geometry of the problem. Our analysis in the hodograph plane faces two distinct challenges, which this section addresses in turn. The first is the existence of a well-posed solution $x(s,t)$, which is complicated by the \emph{corner singularities} inherent to the Cauchy--Goursat geometry. They arise at the transition from non-characteristic (Cauchy) to characteristic (Goursat) boundaries and are present even for solutions that are smooth in the interior. The second challenge is the physical interpretation of a solution once it is found. The inverse map from the hodograph plane back to physical space can fail, creating \emph{fold singularities} \cite{brander2017pseudospherical} where the solution $x(s,t)$ no longer represents a valid, non-degenerate surface $w(x,y)$.

Our approach is to first build a framework to resolve the corner singularity and construct a mathematical solution, and then to investigate the conditions under which that solution is free of folds. We begin by using an exact ``arcsine" solution as a key example to demonstrate how singular boundary data is naturally generated. We then perform a local asymptotic analysis to reveal the universal square-root cusp behavior near the characteristic boundaries. This motivates the construction of a \emph{parametrix} that captures this singular structure, reducing the problem to finding a more regular \emph{corrector} function. Finally, having established a robust method for finding solutions in the hodograph plane, we analyze the inverse problem and the formation of fold singularities.

\begin{example} [The ``arcsine" solution]\label{ex:arcsin}
From Section \ref{sec:symmetry}, we know that $\lambda(u) = u$ admits a product solution with $$u = A e^{-y}, ~~w(x,y) = A e^{-y} \cos(x), \text{ and } p = w_x = -Ae^{-y} \sin(x).$$ 
We ``solve" for $x$ in terms of $u$ and $p$ to get
\begin{equation}\label{eq:x_a}
x_a(s,t) = \arcsin\left(-\frac{p}{u}\right) = \arcsin\left(\frac{t-s}{t+s}\right),
\end{equation}
and consider the governing equation 
\begin{equation}
    2(s+t) x_{st} + x_s + x_t = 0.
    \label{eq:model_pde}
\end{equation}
A direct calculation verifies that the function $x_a(s,t)$ is a solution to the above equation for $s>0,~t>0$. This solution is analytic in the open first quadrant. 

Let us examine the boundary data it induces on the following domain $\Omega = \{ (s,t) \in \mathbb{R}^2 \mid s>0, t>0, s+t>c \}$, for some $c>0$.
\begin{itemize}
    \item \textbf{Goursat Data on $\Gamma_1$ and $\Gamma_2$:}
    \begin{align*}
        \text{For } s \ge c, &\text{ the boundary value on } \Gamma_1 \text{ is } g(s) = x_a(s,0) = \arcsin(-1) = -\tfrac{\pi}{2}.\\[0.5em]
        \text{For } t \ge c, &\text{ the boundary value on } \Gamma_2 \text{ is } h(t) = x_a(0,t) = \arcsin(1) = \tfrac{\pi}{2}.
    \end{align*}

    \item \textbf{Cauchy Data on $\Gamma_C$:}
    For $0 \le s \le c$, we have $t=c-s$. The Dirichlet data is
    $$
    f(s) = x_a(s, c-s) = \arcsin\left(\frac{c-s-s}{c-s+s}\right) = \arcsin\left(\frac{c-2s}{c}\right).
    $$
    
    The associated normal derivative on $\Gamma_C$ is then computed as: 
    $$
    n(s) = (\partial_s + \partial_t)x_a(s,t)\big|_{t=c-s} = \frac{2s-c}{c\sqrt{s(c-s)}}.
    $$
\end{itemize}
\end{example}

This example illustrates that the boundary data set $\{g,h,f,n\}$ generated by the smooth interior solution $x_a$ is appropriate for our weak compatibility of boundary data, as seen in Definition~\ref{defn:weak-compatible}. Namely,
 \begin{enumerate}
    \item \textbf{Continuity of Dirichlet data.} The boundary data is continuous at the corners:
    \begin{align*}
(c,0): ~ & g(c) = -\tfrac{\pi}{2}, 
& f(c) &= \arcsin\!\left(\tfrac{c-2c}{c}\right) = -\tfrac{\pi}{2}, 
& \text{so } g(c) = f(c). \\[0.7em]
(0,c): ~ & h(c) = \tfrac{\pi}{2}, 
& f(0) &= \arcsin\!\left(\tfrac{c}{c}\right) = \tfrac{\pi}{2}, 
& \text{so } h(c) = f(0).
\end{align*}
    \item \textbf{Absolute continuity of $f$.} The tangential derivative is given by $f'(s) = -1/\sqrt{s(c-s)}$. While this derivative is singular at $s=0$ and $s=c$, it is integrable over $[0,c]$. Thus, $f$ is absolutely continuous and $f' \in L^1(\Gamma_C)$.
    \item \textbf{Integrability of $n$.} The normal derivative $n(s)$ has inverse-square-root singularities at the endpoints. These singularities are integrable, and thus $n \in L^1(\Gamma_C)$.
\end{enumerate}

The crucial point is that, although the data satisfy all the weak compatibility requirements, the normal derivative $n(s)$ is manifestly unbounded as $s \to 0^+$ and $s \to c^-$. Hence this data set cannot be the trace of any $C^1(\bar{\Omega})$ function. This example illustrates the necessity of weak compatibility, as smooth interior solutions may still generate singular boundary traces at the corners.

\subsection{Asymptotic Analysis near a Goursat Boundary}\label{sec:asymptotics}
To better understand the solution’s structure near a characteristic (Goursat) boundary, 
we carry out a local analysis of the behavior as $s \to 0^+$ with $t>c$. The same reasoning applies in both the finite and infinite cylinder cases. Motivated by the arcsine solution \eqref{eq:x_a}, which exhibits a square-root singularity in its gradient, 
we posit the ansatz
\begin{align*}
x(s,t) = h(t) + A(t)\sqrt{s} + o(\sqrt{s}), \quad s \to 0^+,
\end{align*}
where $x(0,t) = h(t)$ is the prescribed Goursat data on that boundary.

The relevant partial derivatives become:
\begin{align*}
x_s(s,t)  &= \tfrac{1}{2}A(t)\,s^{-1/2} + o\left(s^{-1/2}\right), \\[0.5em]
x_t(s,t)  &= h'(t) + A'(t)\sqrt{s} + o\left(\sqrt{s}\right), \\[0.5em]
x_{st}(s,t) &= \tfrac{1}{2}A'(t)\,s^{-1/2} + o\left(s^{-1/2}\right).
\end{align*}

Since $s$ is small, we expand the coefficients $\lambda(s+t)$ and $\lambda'(s+t)$ in a Taylor series around $t$:
$$
\lambda(s+t) = \lambda(t) + \lambda'(t)s + O(s^2), \quad
\lambda'(s+t) = \lambda'(t) + \lambda''(t)s + O(s^2).
$$
Substituting these relations into the governing partial differential equation $2\lambda(s+t) x_{st} + \lambda'(s+t) (x_s + x_t) = 0$,
 we obtain that the following expression must vanish:
$$
2(\lambda(t) + O(s))\left(\tfrac{1}{2}A'(t)s^{-1/2} + \dots\right) 
+ (\lambda'(t) + O(s))\left(\tfrac{1}{2}A(t)s^{-1/2} + h'(t) + \dots\right).
$$

To find the condition for this equation to hold, we collect the coefficients of the most singular term, $s^{-1/2}$:
\[
2\lambda(t)\left(\tfrac{1}{2}A'(t)\right) + \lambda'(t)\left(\tfrac{1}{2}A(t)\right) = 0.
\]
Balancing the leading-order terms gives a first-order ordinary differential equation for the amplitude function $A(t)$:
\[
\lambda(t)A'(t) + \tfrac{1}{2}\lambda'(t)A(t) = 0
\]
and the solution is given by
\begin{equation}
A(t) = M[\lambda(t)]^{-1/2},
\label{eq:local_singularity}
\end{equation}
where $M$ is a constant of integration.

\subsection{The Parametrix and Corrector Solutions}\label{sec:parametrix}
The previous calculation shows that, for a solution to exhibit the square-root cusp behavior near the boundary $s=0$, the amplitude $A(t)$ cannot be arbitrary but scale like $\lambda(t)^{-1/2}$. This motivates a natural deformation of the arcsine solution from Example~\ref{ex:arcsin}, namely
\[
x_p(s,t) = \arcsin\!\left(\frac{\lambda(t)-\lambda(s)}{\lambda(t)+\lambda(s)}\right).
\]
Expanding this expression for small $s$ yields
\[
\arcsin\!\left(\frac{\lambda(t)-\lambda(s)}{\lambda(t)+\lambda(s)}\right)
\approx
\arcsin\!\left(1 - \frac{2\lambda(s)}{\lambda(t)}\right)
\approx \frac{\pi}{2} - 2\sqrt{\tfrac{\lambda(s)}{\lambda(t)}}
\approx \frac{\pi}{2} - \tfrac{M}{\sqrt{\lambda(t)}}\sqrt{s},
\]
for some constant $M$, consistent with the square-root asymptotics obtained in \eqref{eq:local_singularity}.

We think of $x_p$ as a “parametrix,” i.e., an approximate solution of the PDE that satisfies the Goursat boundary conditions $x(s,0) = x_+$ and $x(0,t) = x_-$, which are discontinuous at the origin, while also capturing the leading-order singular behavior of $x$. While the term parametrix typically denotes an approximate inverse operator \cite{HormanderbookI}, our usage follows its strategic purpose of providing an approximate solution that captures the principal singular behavior, thereby regularizing the problem for the remainder of the solution, the {\em corrector term}. 

The governing PDE for $x$ is given by $L[x] =0,$ where the linear operator is defined in \eqref{eq:LinearMA}. Let us now define the residual operator by
\[
\mathcal{H}(s,t) := L[x_p],
\]
that captures the result of applying the PDE operator to the parametrix. In general, this residual is not zero. We therefore introduce the \emph{corrector} solution $x_{\mathrm{corr}}(s,t)$, chosen such that
\[
x_{\mathrm{base}} = x_p + x_{\mathrm{corr}}
\]
satisfies both the PDE
$L[x_{\mathrm{base}}] = 0$
and the appropriate boundary conditions.  

Since $x_p$ was constructed to satisfy the original Goursat data, the corrector $x_{\mathrm{corr}}$ necessarily satisfies homogeneous Goursat boundary conditions. Thus, the problem for $x_{corr}$ reduces to solving a Goursat problem with homogeneous boundary data and forcing term
$-L[x_p] = -\mathcal{H}.$ This strategy is effective only if the residual $\mathcal{H}$ is smoother than the singular parametrix $x_p$.

For the analysis that follows, we fix the domains for $S>0,~T>0$ as
\begin{equation} \label{domainforresidual}
    R := [0,S]\times[0,T], \quad R_0 := R \setminus \{\mathbf{0}\}.
\end{equation}

We first establish the regularity of the residual $\mathcal{H}$, which will play a central role in the construction of the corrector.

\begin{lemma}[Regularity of the Residual]
Suppose $\lambda(u)$ is smooth for $u>0$ with $\lambda'(0)>0$. Then the residual 
$\mathcal{H}(s,t) = L[x_p]$
is smooth on the domain $(0,S)\times(0,T)$. Moreover, there exists a constant $M_H>0$ such that, for all $(s,t)\in R_0$, we have the bound 
\[
\left|\mathcal{H}(s,t)\right| \leq M_H \,\frac{\sqrt{st}}{s+t}.
\]
\end{lemma}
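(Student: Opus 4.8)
The plan is to prove the two assertions of the lemma separately, starting with the smoothness claim and then deriving the quantitative bound, which is where the real work lies.

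\textbf{Smoothness.} On $(0,S)\times(0,T)$ both $\lambda(s)$ and $\lambda(t)$ are smooth and strictly positive (since $\lambda'(0)>0$ forces $\lambda>0$ for small positive argument, and $\lambda$ is assumed nonvanishing in this regime), so the ratio $\rho(s,t):=\frac{\lambda(t)-\lambda(s)}{\lambda(t)+\lambda(s)}$ is a smooth function valued in $(-1,1)$ on the open rectangle. Since $\arcsin$ is real-analytic on $(-1,1)$, the parametrix $x_p=\arcsin(\rho)$ is smooth there. Applying the second-order linear operator $L[x]=2\lambda(s+t)x_{st}+\lambda'(s+t)(x_s+x_t)$ with smooth coefficients ($\lambda$ smooth for $u>0$, and $s+t>0$ on the open rectangle) keeps everything smooth. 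Hence $\mathcal{H}=L[x_p]$ is smooth on $(0,S)\times(0,T)$.

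\textbf{The bound.} First I would simplify $L[x_p]$ by exploiting that the \emph{undeformed} arcsine map is an exact solution. Write $x_p = \Phi(\lambda(s),\lambda(t))$ where $\Phi(a,b)=\arcsin\!\big(\frac{b-a}{a+b}\big)$; a short computation gives $\Phi_a = -\frac{1}{\sqrt{ab}}\cdot\frac{\sqrt{ab}}{a+b}\cdot(\text{stuff})$ — more usefully, one checks $\frac{\partial x_p}{\partial s} = \lambda'(s)\,\Phi_a$ and $\frac{\partial x_p}{\partial t}=\lambda'(t)\,\Phi_b$, and $\frac{\partial^2 x_p}{\partial s\,\partial t} = \lambda'(s)\lambda'(t)\,\Phi_{ab}$. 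From Example~\ref{ex:arcsin} we know $x_a(s,t)=\Phi(s,t)$ solves $2(s+t)\Phi_{st}+\Phi_s+\Phi_t=0$, i.e.\ $2(a+b)\Phi_{ab}+\Phi_a+\Phi_b=0$ as an identity in $(a,b)$. The strategy is then to substitute $a=\lambda(s),\,b=\lambda(t)$ into this identity and compare with $L[x_p]$: the two expressions share the same $\Phi_{ab}$ and $\Phi_a,\Phi_b$ structure, so $\mathcal{H}$ collapses to a combination of $\Phi_a$, $\Phi_b$, $\Phi_{ab}$ with coefficients built from $\lambda(s),\lambda(t),\lambda(s+t),\lambda'(s),\lambda'(t),\lambda'(s+t),\lambda''(\cdot)$ that \emph{vanish when $\lambda$ is the identity}. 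This is the key algebraic reduction and I expect it to be the main obstacle: one must organize the cancellation cleanly enough that the surviving coefficients are manifestly $O(1)$ (and in fact $O(s+t)$ or better near the origin, using Taylor expansion of $\lambda$ about $0$ with $\lambda(0)=0$, $\lambda'(0)>0$).

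Once $\mathcal{H}$ is expressed through $\Phi_a,\Phi_b,\Phi_{ab}$, I would bound each factor. From $\Phi(a,b)=\arcsin\!\big(\frac{b-a}{a+b}\big)$ one computes $\Phi_a = -\frac{1}{a+b}\sqrt{\tfrac{b}{a}}\cdot\frac{1}{?}$; the clean facts are $\Phi_a = -\dfrac{1}{2}\dfrac{1}{\sqrt{ab}}\cdot\dfrac{\sqrt{ab}}{a+b}\cdot 2 = -\dfrac{1}{\sqrt{ab}}\cdot\dfrac{\sqrt{ab}}{a+b}$ — concretely $|\Phi_a|\lesssim \frac{1}{\sqrt{a}\,\sqrt{a+b}}$, $|\Phi_b|\lesssim\frac{1}{\sqrt{b}\,\sqrt{a+b}}$, and $|\Phi_{ab}|\lesssim \frac{1}{\sqrt{ab}\,(a+b)}$, each verified by direct differentiation of $\arcsin$ and the algebraic identity $1-\big(\tfrac{b-a}{a+b}\big)^2 = \tfrac{4ab}{(a+b)^2}$. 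Substituting $a\asymp s$, $b\asymp t$ for small $s,t$ (valid since $\lambda(s)\asymp \lambda'(0)s$), these give $|\Phi_a|\lesssim (s(s+t))^{-1/2}$, etc. Combining with the coefficient gains from the cancellation — each surviving coefficient carrying an extra factor comparable to $s+t$ or to $st/(s+t)$ — and noting that on the remaining region $\{s+t\ge\delta\}\cap R$ everything is smooth and bounded so the estimate is trivial there, one collects the powers and arrives at $|\mathcal{H}(s,t)|\le M_H\,\frac{\sqrt{st}}{s+t}$ uniformly on $R_0$. I would present the near-origin estimate in detail and dispatch the bounded-away-from-origin region by a compactness remark.
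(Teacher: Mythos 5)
Your strategy is genuinely different from the paper's: the paper computes $\mathcal{H}=L[x_p]$ by brute force and then reads off the bound from limits of $(s+t)\mathcal{H}/\sqrt{st}$ as $s\to0$, as $t\to 0$, and along rays through the origin, whereas you exploit the fact that $x_p=\Phi(\lambda(s),\lambda(t))$ with $\Phi(a,b)=\arcsin\!\bigl(\tfrac{b-a}{a+b}\bigr)$ an exact solution of $2(a+b)\Phi_{ab}+\Phi_a+\Phi_b=0$. Eliminating $\Phi_{ab}$ via this identity gives the clean reduction
\[
\mathcal{H}=c_a\,\Phi_a+c_b\,\Phi_b,\qquad
c_a=\lambda'(s)\Bigl[\lambda'(s+t)-\tfrac{\lambda(s+t)\,\lambda'(t)}{\lambda(s)+\lambda(t)}\Bigr],
\]
with $c_b$ obtained by swapping $s\leftrightarrow t$, and with the explicit factors $\Phi_a=-\tfrac{1}{a+b}\sqrt{b/a}$, $\Phi_b=\tfrac{1}{a+b}\sqrt{a/b}$ carrying all the singular behavior. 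This is a structurally cleaner explanation of the cancellation than the paper's limit computations, and your smoothness argument is fine.

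The gap is in the one step where the bound must actually be produced. You assert that the surviving coefficients gain ``a factor comparable to $s+t$ or to $st/(s+t)$,'' but a gain of $O(s+t)$ does not suffice: paired with your estimate $|\Phi_a|\lesssim (s(s+t))^{-1/2}$ it yields $\sqrt{(s+t)/s}$, which is unbounded as $s\to0^+$ for fixed $t$. What is needed, and what is true, is that $c_a=O(s)$ \emph{uniformly} on $R_0$ (near the origin one in fact finds $c_a\sim\lambda'(0)\lambda''(0)\,s^2/(s+t)$): the bracket in $c_a$ vanishes identically at $s=0$ since $\lambda'(t)-\lambda(t)\lambda'(t)/\lambda(t)=0$, and writing its numerator as $N(s,t)=\lambda'(s+t)(\lambda(s)+\lambda(t))-\lambda(s+t)\lambda'(t)$ one has $|\partial_s N|\le C(s+t)$ (using $\lambda(s)+\lambda(t)\asymp s+t$ and $|\lambda'(s)-\lambda'(t)|\le C|s-t|$), hence $|N|\le Cs(s+t)$ and $|c_a|\le C's$. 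Your proposal never identifies this first-order vanishing in $s$ alone, which is precisely what cancels the $s^{-1/2}$ blow-up of $\Phi_a$. A second, related loss: the simplified bound $|\Phi_a|\lesssim(a(a+b))^{-1/2}$ discards the numerator $\sqrt{b}$; even with $|c_a|\le Cs$ it only yields $|c_a\Phi_a|\lesssim\sqrt{s/(s+t)}$, which is bounded but not $\lesssim\sqrt{st}/(s+t)$ when $t\ll s$. You must keep the exact form $|\Phi_a|=\sqrt{\lambda(t)}\big/\bigl((\lambda(s)+\lambda(t))\sqrt{\lambda(s)}\bigr)\asymp\sqrt{t}/\bigl((s+t)\sqrt{s}\bigr)$, which combined with $|c_a|\le Cs$ gives exactly $C\sqrt{st}/(s+t)$. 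With these two corrections your route closes and is, in my view, preferable to the paper's.
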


\begin{proof}
A direct computation of $\mathcal{H}(s,t) = L[x_p]$ gives
\begin{align*}
\mathcal{H}(s,t) = & \frac{1}{\sqrt{\lambda(s)\lambda(t)}\,(\lambda(s)+\lambda(t))^2} \bigg[-\lambda(s)^2 \lambda'(t)\lambda'(s+t) \\[0.5em]
&~+ \lambda(t)\lambda'(s)\big(-\lambda(s+t)\lambda'(t) + \lambda(t)\lambda'(s+t)\big) \\[0.5em]
&\quad+ \lambda(s)\big(\lambda(s+t)\lambda'(s)\lambda'(t) + \lambda(t)(\lambda'(s)-\lambda'(t))\lambda'(s+t)\big) \bigg].
\end{align*}

The denominator is smooth and strictly positive for $s>0,~t>0$, hence $\mathcal{H}$ is smooth on $(0,S)\times(0,T)$. To establish the bound on $R_0$, it remains to estimate the behavior of $\mathcal{H}(s,t)$ as $(s,t)$ approaches the boundary $\partial R$, in particular as $s\to 0$, ~$t\to 0$, or $s+t\to 0$. 

A direct computation, under the assumption $\lambda'(0) > 0$, gives
\begin{align*}
\lim_{t > 0,\, s \to 0^+} \frac{\mathcal{H}(s,t)}{\sqrt{st}} 
&= \frac{\sqrt{\lambda '(0)} \,\big(\lambda (t) \lambda ''(t)-2 \lambda '(t)^2+2
   \lambda '(0) \lambda '(t)\big)}{\lambda (t)^{3/2}} + O(s),
\end{align*}
with an analogous expression obtained by interchanging $s$ and $t$ for the limit $s>0,\, t \to 0$. This quantity is continuous in $t$, except possibly at $t=0$. To analyze the behavior at this endpoint, we examine the limit:
\begin{align*}
\lim_{t \to 0^+} &\lim_{s \to 0^+} \frac{(s+t)\,\mathcal{H}(s,t)}{\sqrt{st}} \\[0.5em]
= &\lim_{t \to 0^+} t \,\frac{\sqrt{\lambda '(0)} \,\big(\lambda (t) \lambda ''(t)-2 \lambda '(t)^2+2
   \lambda '(0) \lambda '(t)\big)}{\lambda (t)^{3/2}} + O(s) \\[0.5em]
=& - \lambda''(0).
\end{align*}

Because $x_p$ is antisymmetric while the operator $L$ is symmetric under the interchange $s \leftrightarrow t$, we obtain
\[
\lim_{t \to 0^+} \lim_{s \to 0^+} \frac{(s+t)\mathcal{H}(s,t)}{\sqrt{st}} = \lambda''(0).
\]
Thus the limit depends on the direction of approach to the origin. To analyze the behavior as $s,~t \to 0$ along different paths, we introduce the rescaled variables $s = \alpha u$ and $t = \beta u$ where $u = s+t$ and we have $0 \leq \alpha, \beta$ with  $\alpha +\beta = 1$. With this change of variables, and using the local expansion 
\[
\lambda(u) = u + \tfrac{1}{2}\lambda''(0)u^2 + O(u^3) \quad \text{as } u \to 0,
\]
we compute
\[
\lim_{u \to 0,\, s = \alpha u,\, t = \beta u} \frac{(s+t)\mathcal{H}(s,t)}{\sqrt{st}} = (\alpha - \beta)\,\lambda''(0).
\]

Since $|\alpha-\beta|\leq 1$ and $2\sqrt{st}\leq s+t$, it follows that $\mathcal{H}(s,t)$ is a bounded continuous function on $R_0$. In general, $\mathcal{H}$ does not extend continuously to $R$, but it can be extended as a bounded measurable function by, for instance, defining $\mathcal{H}(\mathbf{0}) = 0$.
\end{proof}

We now introduce a function space that will facilitate the analysis of the corrector problem.

\begin{definition}[Lipschitz functions vanishing on the axes]
We define the Banach space
\[
\mathrm{Lip}_0(R) := \Big\{\, x \in C(R) \;\Big|\; x(s,0)=x(0,t)=0 \ \text{for } 0 \leq s \leq S, \ 0 \leq t \leq T \,\Big\},
\]
equipped with the Lipschitz norm
\[
\|x\|_{\mathrm{Lip}} := \sup_{p_1 \ne p_2 \in R} \frac{|x(p_1) - x(p_2)|}{|p_1 - p_2|} < \infty.
\]
Any function $x \in \mathrm{Lip}_0(R)$ satisfies the pointwise bound
\[
|x(s,t)| \leq\|x\|_{\mathrm{Lip}} \min(s,t) \leq \|x\|_{\mathrm{Lip}} \,(s+t).
\]
\end{definition}

Within this setup, the residual $\mathcal{H}$ gives rise to an integrated quantity, which we now show belongs to $\mathrm{Lip}_0(R)$.

\begin{lemma}[Regularity of the Integrated Forcing Term]
Let $f(s,t)$ denote the forcing term for the integral equation for the corrector, defined by
\[
f(s,t) = - \frac{1}{2\lambda(s+t)} \int_0^s \int_0^t \mathcal{H}(\sigma,\tau)\,d\sigma\,d\tau.
\]
Then $f \in \mathrm{Lip}_0(R)$. That is, $f$ is Lipschitz continuous on $R$ and vanishes on the coordinate axes $s=0$ and $t=0$.
\end{lemma}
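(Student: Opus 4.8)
The plan is to verify the two defining properties of $\mathrm{Lip}_0(R)$ separately: the vanishing of $f$ on the axes, and the global Lipschitz bound on $R$. The first is immediate, so the real content is the Lipschitz estimate. I would introduce the double antiderivative
\[
\Phi(s,t) := \int_0^s\!\!\int_0^t \mathcal{H}(\sigma,\tau)\,d\sigma\,d\tau,
\]
so that $f = -\Phi/(2\lambda(s+t))$. Since $|\mathcal{H}(\sigma,\tau)| \le M_H \sqrt{\sigma\tau}/(\sigma+\tau) \le \tfrac{1}{2}M_H\min(\sqrt{\sigma/\tau},\sqrt{\tau/\sigma})\le\tfrac12 M_H$ by the Regularity of the Residual lemma, $\mathcal{H}$ is bounded measurable on $R_0$, hence $\Phi$ is well-defined, vanishes when $s=0$ or $t=0$, and (being the integral of an $L^\infty$ function over an axis-aligned rectangle) is Lipschitz with $\|\Phi\|_{\mathrm{Lip}} \le C(S,T)\|\mathcal{H}\|_{L^\infty(R_0)}$; indeed $\partial_s\Phi = \int_0^t \mathcal{H}(s,\tau)\,d\tau$ is bounded by $\tfrac12 M_H \min(t,?)$ and similarly for $\partial_t\Phi$, and these partials are themselves bounded, giving the Lipschitz bound on $\Phi$. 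Then $f$ vanishes on the axes because $\Phi$ does and $\lambda(s+t)$ is bounded below (using $\lambda'(0)>0$, so $\lambda(u)\ge c_0 u$ near $0$ and $\lambda>0$ away from $0$).

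For the Lipschitz property of $f = -\Phi/(2\lambda(s+t))$, I would write $f$ as a product/quotient of the two factors $\Phi$ and $1/(2\lambda(s+t))$ and control each. The denominator $\lambda(s+t)$ is smooth and bounded below on $R_0$ only away from the origin; near the origin $\lambda(s+t)\asymp s+t$ degenerates. This is the crux. The key observation that saves the day is that $\Phi$ vanishes to high enough order at the origin to compensate: from $|\mathcal{H}|\le \tfrac12 M_H$ one gets the crude bound $|\Phi(s,t)| \le \tfrac12 M_H\, s t \le \tfrac12 M_H (s+t)\min(s,t)$, and more importantly $|\partial_s \Phi(s,t)| = |\int_0^t \mathcal{H}(s,\tau)d\tau| \le \tfrac12 M_H\, t$ and $|\partial_t\Phi(s,t)| \le \tfrac12 M_H\, s$. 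Thus, on the smooth interior $(0,S)\times(0,T)$,
\[
\partial_s f = -\frac{\partial_s\Phi}{2\lambda(s+t)} + \frac{\Phi\,\lambda'(s+t)}{2\lambda(s+t)^2},
\]
and using $|\partial_s\Phi|\le \tfrac12 M_H t$, $|\Phi| \le \tfrac12 M_H st$, $\lambda(s+t)\ge c_0(s+t)$, and $\lambda'$ bounded on $[0,S+T]$, every term is bounded by a constant times $\min(1, t/(s+t)) \le 1$, uniformly. The same holds for $\partial_t f$ by the symmetric estimate. Hence $\nabla f$ is bounded on the (connected, dense) open set $(0,S)\times(0,T)$, and since $f$ extends continuously to $R$ (as $\Phi$ is continuous and $\lambda(s+t)$ is continuous and nonzero except at $\mathbf 0$, where $\Phi/(s+t)\to 0$ forces $f\to 0$), the mean value inequality upgrades the gradient bound to a global Lipschitz bound on all of $R$.

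The main obstacle, as indicated, is the degeneracy of $\lambda(s+t)$ at the origin: one cannot simply cite ``quotient of Lipschitz functions'' because $1/\lambda(s+t)$ is not Lipschitz (not even bounded) near $\mathbf 0$. The resolution is the matching-vanishing-order argument above — tracking that the numerator $\Phi$ and its first derivatives vanish at least one power of $(s+t)$ faster than the denominator degrades. A careful write-up should (i) record the elementary bounds on $\Phi$, $\partial_s\Phi$, $\partial_t\Phi$ from $\mathcal{H}\in L^\infty(R_0)$; (ii) fix $c_0>0$ with $\lambda(u)\ge c_0 u$ for $u\in[0,S+T]$, which follows from $\lambda(0)=0$, $\lambda'(0)>0$, and $\lambda>0$ on $(0,S+T]$; (iii) bound $|\partial_s f|, |\partial_t f|$ on $(0,S)\times(0,T)$ by a constant $C = C(M_H, c_0, S, T, \|\lambda'\|_\infty)$; and (iv) conclude via continuity of $f$ on $R$ and the mean value theorem that $\|f\|_{\mathrm{Lip}}\le \sqrt2\, C$, together with $f|_{s=0} = f|_{t=0} = 0$, so that $f\in\mathrm{Lip}_0(R)$.
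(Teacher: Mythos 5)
Your proposal is correct and follows essentially the same route as the paper's proof: you introduce the double antiderivative $I=\Phi$, record the bounds $|\Phi|\le C\,st$, $|\partial_s\Phi|\le C\,t$, $|\partial_t\Phi|\le C\,s$, use $\lambda(u)\ge c_0 u$ coming from $\lambda(0)=0$ and $\lambda'(0)>0$, and apply the quotient rule to see that the vanishing orders of numerator and denominator match, so the gradient of $f$ stays bounded and $f\in\mathrm{Lip}_0(R)$. The only cosmetic issue is the stray ``$\min(t,?)$'' placeholder, which should simply read $\tfrac12 M_H t$.
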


\begin{proof}
Consider the integrated residual,
\[
I(s,t) := \int_0^s \int_0^t \mathcal{H}(\sigma,\tau)\,d\sigma\,d\tau.
\]
From the preceding lemma, the residual $\mathcal{H}$ is a bounded measurable function on $R$, with $|\mathcal{H}| \le M_H$ for some constant $M_H>0$. It follows immediately that $I(s,0) = I(0,t) = 0$, and therefore $f(s,0) = f(0,t) = 0$ for all $s,t > 0$.

We begin by obtaining a pointwise bound for $f$. Using the bound on $\mathcal{H}$, we estimate the integral as $$|I(s,t)| \le M_H st.$$ For $s+t > 0$, we have $\lambda(s+t) > 0$. Near the origin, $\lambda(u) \approx \lambda'(0)u$, so for some constant $C_1 > 0$, we have $\lambda(s+t) \ge C_1(s+t)$ on $R$. This gives the bound:
\[
|f(s,t)| = \frac{|I(s,t)|}{2\lambda(s+t)} \le \frac{M_H st}{2C_1(s+t)}.
\]
By the AM-GM inequality, since $2\sqrt{st}\leq s+t$, it follows that $\tfrac{st}{s+t} \leq \tfrac{\sqrt{st}}{2}$. This shows that $|f(s,t)| \le C_2 \sqrt{st}$ for some constant $C_2$, verifying that $f$ is continuous up to the origin with $f(\mathbf{0})=0$.

To prove Lipschitz continuity, it is enough to show that the partial derivatives of $f$ are bounded. These are given by $$I_s(s,t) = \int_0^t \mathcal{H}(s,\tau)\,d\tau, \quad I_t(s,t) = \int_0^s \mathcal{H}(\sigma,t)\,d\sigma,$$ and are bounded by $|I_s(s,t)| \le M_H t$ and $|I_t(s,t)| \le M_H s$. By the quotient rule, the partial derivative of $f$ with respect to $s$ is
\[
f_s(s,t) = -\frac{I_s(s,t)\,\lambda(s+t) - I(s,t)\,\lambda'(s+t)}{2\lambda(s+t)^2}.
\]
Using our bounds on $I$, $I_s$, and the linear behavior of $\lambda$ and $\lambda'$ near the origin, the numerator is bounded by $O(t(s+t) + st) = O(st+t^2)$, while the denominator is $O((s+t)^2)$. The quotient is therefore bounded for small $s$ and $t$. Away from the origin, all terms are continuous and bounded on the compact domain $R$. 

A similar argument applies to $f_t$. Thus $f$ is Lipschitz continuous on $R$.
\end{proof}

We now prove the existence of the corrector solution, showing that it is more regular than the parametrix $x_p$, which is discontinuous at $\mathbf{0}$.

\subsection{Existence and Uniqueness for the Corrector}\label{sec:corrector}
The existence and uniqueness of the corrector can be expressed through an associated Volterra integral equation. Specifically, we consider
\begin{equation}\label{eq:Volterra}
x(s,t) = f(s,t) + \int_{0}^{s} K_1(s,t,\sigma)\, x(\sigma,t) \, d\sigma 
         + \int_{0}^{t} K_2(s,t,\tau)\, x(s,\tau) \, d\tau,
\end{equation}
for an unknown function $x \in C_0(R)$, 
where $R = [0, S] \times [0, T]$ and $C_0(R)$ denotes the Banach space of continuous functions on $R$ that vanish on the axes $s=0$ and $t=0$. 

The kernels are defined for $s+t>0$ by
\[
K_1(s,t,\sigma) = \frac{\lambda'(\sigma+t)}{2\lambda(s+t)}, 
\qquad 
K_2(s,t,\tau) = \frac{\lambda'(s+\tau)}{2\lambda(s+t)}.
\]

\begin{theorem}
Let the kernels $K_1$ and $K_2$ be defined as above. Suppose $\lambda$ satisfies:
\begin{enumerate}
    \item $\lambda(0) = 0$;
    \item $\lambda$ is continuously differentiable on $(0, S+T)$, with one-sided derivatives at the endpoints, and $\lambda'(u) > 0$ for all $u \in [0, S+T]$.
\end{enumerate}
Then, for any $f \in C_0(R)$, the Volterra equation \eqref{eq:Volterra} admits a unique solution $x \in C_0(R)$.
\end{theorem}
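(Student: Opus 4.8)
The plan is to recast \eqref{eq:Volterra} as the affine fixed-point equation $x = \mathcal{A}x := f + \mathcal{K}x$ on the Banach space $C_0(R)$, where $\mathcal{K}$ denotes the linear Volterra operator on the right-hand side of \eqref{eq:Volterra}, and to prove that $\mathcal{K}$ is a genuine contraction in the supremum norm. Since $\lambda(0)=0$, the kernels $K_1,K_2$ blow up like $(s+t)^{-1}$ at $\mathbf{0}$, so Theorem~\ref{thm:existence-uniqueness} does not apply and the argument must use the precise form of the kernels rather than a uniform bound on them. The structural identity that does the work is a telescoping computation of the Volterra mass: since $\lambda'\ge 0$ and $\int_0^s \lambda'(\sigma+t)\,d\sigma = \lambda(s+t)-\lambda(t)$,
\[
\int_0^s |K_1(s,t,\sigma)|\,d\sigma = \frac{\lambda(s+t)-\lambda(t)}{2\lambda(s+t)}, \qquad \int_0^t |K_2(s,t,\tau)|\,d\tau = \frac{\lambda(s+t)-\lambda(s)}{2\lambda(s+t)},
\]
so their sum equals $1 - \frac{\lambda(s)+\lambda(t)}{2\lambda(s+t)}$ and hence $|(\mathcal{K}x)(s,t)| \le \bigl(1 - \tfrac{\lambda(s)+\lambda(t)}{2\lambda(s+t)}\bigr)\|x\|_\infty$ for every $x\in C_0(R)$. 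Writing $\ell := \min_{[0,S+T]}\lambda' > 0$ and $\Lambda := \max_{[0,S+T]}\lambda' < \infty$ (these exist because $\lambda'\in C([0,S+T])$ is strictly positive), the elementary bound $\ell u \le \lambda(u) \le \Lambda u$ — which uses $\lambda(0)=0$ — gives $\tfrac{\lambda(s)+\lambda(t)}{2\lambda(s+t)} \ge \tfrac{\ell}{2\Lambda}$ uniformly on $R$, so $\|\mathcal{K}x\|_\infty \le \kappa\|x\|_\infty$ with $\kappa := 1 - \tfrac{\ell}{2\Lambda}\in[\tfrac12,1)$. This sub-unit bound on the Volterra mass, not compactness, is the conceptual crux.

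The step I expect to need the most care — and the only place the singularity at $\mathbf{0}$ genuinely intrudes — is verifying that $\mathcal{K}$ maps $C_0(R)$ into itself: that $\mathcal{K}x$ is continuous on all of $R$, including at $\mathbf{0}$, and vanishes on the axes. Well-definedness of the integrals is harmless off the axes, where $\lambda(s+t)\ge \ell(s+t)>0$. For the $C_0$ property, the key refinement of the estimate above is that in both integrals the argument of $x$ has a coordinate $\le \min(s,t)$; therefore, with $\rho_x(\delta) := \sup\{|x(\sigma,\tau)| : (\sigma,\tau)\in R,\ \min(\sigma,\tau)\le\delta\}$ — a quantity tending to $0$ as $\delta\to 0^+$ because $x$ is continuous on the compact set $R$ and vanishes on the axes — one obtains $|(\mathcal{K}x)(s,t)| \le \kappa\,\rho_x(\min(s,t))$. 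This single inequality shows at once that $\mathcal{K}x$ vanishes on the axes and is continuous at $\mathbf{0}$ with value $0$. Continuity at the remaining points of $R$ is routine, since there the kernels are continuous and bounded and dominated convergence applies.

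With these two facts in hand, $\mathcal{K}:C_0(R)\to C_0(R)$ is linear with operator norm at most $\kappa<1$; since $f\in C_0(R)$ by hypothesis, $\mathcal{A}=f+\mathcal{K}$ satisfies $\|\mathcal{A}x-\mathcal{A}y\|_\infty = \|\mathcal{K}(x-y)\|_\infty \le \kappa\|x-y\|_\infty$, and the Banach fixed-point theorem produces the unique $x\in C_0(R)$ with $x=\mathcal{A}x$, i.e.\ the unique solution of \eqref{eq:Volterra}. Alternatively one may run the successive approximations $x_{n+1}=\mathcal{A}x_n$ and sum the geometric series $\sum_n\kappa^n$, mirroring the proof of Theorem~\ref{thm:existence-uniqueness}; the sole new ingredient is the telescoping mass identity, which replaces the now-unavailable constant $\sup|K_i|$ by a uniform sub-unit bound and thereby gives well-posedness without invoking compactness of $\mathcal{K}$. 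Finally, note that the forcing term $f$ supplied by the preceding lemma lies in $\mathrm{Lip}_0(R)\subset C_0(R)$, so the corrector problem is recovered as a special case.
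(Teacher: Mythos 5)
Your proposal is correct and follows essentially the same route as the paper: both recast the equation as a fixed-point problem for $\mathcal{A}=f+\mathcal{K}$ on $C_0(R)$, compute the integrated kernel mass exactly as $1-\tfrac{\lambda(s)+\lambda(t)}{2\lambda(s+t)}$, and bound this below $1$ uniformly (you via $\ell u\le\lambda(u)\le\Lambda u$ with $\ell=\min\lambda'$, $\Lambda=\max\lambda'$; the paper via the extrema of $g(u)=\lambda(u)/u$, which by the mean value theorem gives the same kind of constant) before invoking the Banach fixed-point theorem. Your modulus argument $|(\mathcal{K}x)(s,t)|\le\kappa\,\rho_x(\min(s,t))$ is a slightly more explicit justification of continuity at the origin than the paper offers, but it is a refinement of the same proof, not a different one.
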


\begin{proof}
The proof uses the Banach Fixed-Point Theorem. We rewrite \eqref{eq:Volterra} in the form
\[
x = \mathcal{A}(x), \quad \mathcal{A}(x) := f + \mathcal{K}x,
\]
where $\mathcal{K}$ is the linear integral operator defined in \eqref{eq:K_op} by the kernels $K_1$ and $K_2$.

\medskip
\noindent \textbf{Preservation of boundary conditions.}
The operator $\mathcal{A}$ preserves the property of vanishing on the axes. Indeed, if $x \in C_0(R)$ satisfies $x(s,0)=0$ and $x(0,t)=0$, then
\begin{align*}
(\mathcal{A}x)(s,0) &= f(s,0)+ \int_0^s K_1(s,0,\sigma)\,x(\sigma,0)\,d\sigma = 0,\\[0.5em]
(\mathcal{A}x)(0,t) &= f(0,t)+\int_0^t K_2(0,t,\tau)\,x(0,\tau)\,d\tau = 0.
\end{align*}
Thus $\mathcal{A}$ maps $C_0(R)$ into itself. Moreover, away from the origin $\mathbf{0}$, the kernels $K_1$ and $K_2$ are continuous in $(s,t)$, so $\mathcal{A}$ preserves continuity on $R$.

\medskip
\noindent \textbf{Behavior at the origin.} 
The kernels are well defined for $s+t>0$, but at the origin the denominator $\lambda(s+t)$ vanishes. We set $(\mathcal{A}u)(\mathbf{0})=0$, which yields continuity, as the following limit evaluates to zero:
\begin{align*}
\lim_{(s,t)\to \mathbf{0}} x(s,t) 
= f(\mathbf{0} )
&+ \lim_{s \to 0} \frac{1}{2 \lambda(s)} \int_{0}^{s} \lambda'(s+\sigma)\, x(\sigma,0)\, d\sigma \\
&\quad + \lim_{t \to 0} \frac{1}{2 \lambda(t)} \int_{0}^{t} \lambda'(s+\tau)\, x(0,\tau)\, d\tau.
\end{align*}

\medskip
\noindent \textbf{Contraction property.}
We now estimate the operator norm on $C_0(R)$ with the supremum norm $\|\cdot\|_\infty$:
\begin{equation}\label{eq:Knorm}
\|\mathcal{K}\|_\infty = \sup_{(s,t)\in R} \left\{ \int_0^s |K_1(s,t,\sigma)| \, d\sigma 
+ \int_0^t |K_2(s,t,\tau)| \, d\tau \right\}.
\end{equation}

From the hypothesis, the function
\begin{equation}\label{eq:defg}
g(u) := \frac{\lambda(u)}{u}, \quad u>0,
\end{equation}
can be extended continuously to $[0, S+T]$ by $g(0):=\lambda'(0)$. Since $g$ is continuous on this compact interval, the Extreme Value Theorem guarantees that the following are attained:
\[
m_0 := \inf_{u \in [0, S+T]} g(u), 
\qquad 
M := \sup_{u \in [0, S+T]} g(u).
\]
By the Mean Value Theorem, for every $u > 0$ there exists $\xi \in (0,u)$ such that $g(u) = \lambda'(\xi)$, and thus $g(u) > 0$. At $u=0$ we have $g(0) = \lambda'(0) > 0$. Consequently, $m_0 > 0$. In addition, since $g$ is continuous on a compact set, we also have $M < \infty$.

Because $\lambda'(u)>0$ and $\lambda(u)=ug(u)>0$ for $u>0$, the kernels are nonnegative. Hence the operator norm \eqref{eq:Knorm} becomes
\[
\|\mathcal{K}\|_\infty = \sup_{(s,t)\in R_0} 
\left\{ \int_0^s K_1(s,t,\sigma)\, d\sigma 
+ \int_0^t K_2(s,t,\tau) \, d\tau \right\}.
\]

Evaluating the integrals yields
\begin{align*} 
\int_0^s K_1(s,t,\sigma)\, d\sigma 
&= \frac{\lambda(s+t) - \lambda(t)}{2\lambda(s+t)}, \\[0.5em]
\int_0^t K_2(s,t,\tau)\, d\tau 
&= \frac{\lambda(s+t) - \lambda(s)}{2\lambda(s+t)},
\end{align*}
and their sum becomes
\[
1 - \frac{\lambda(s) + \lambda(t)}{2\lambda(s+t)}.
\]

In terms of $g$ given by \eqref{eq:defg}, we have
\[
\frac{\lambda(s) + \lambda(t)}{2\lambda(s+t)} 
= \frac{s g(s) + t g(t)}{2(s+t) g(s+t)} 
\;\ge\; \frac{m_0}{2M}.
\]
Thus,
\[
\int_0^s K_1\,d\sigma + \int_0^t K_2\,d\tau \;\le\; 1 - \frac{m_0}{2M},
\]
for all $(s,t)\in R$. Hence
\[
\|K\|_\infty \le 1 - \frac{m_0}{2M} < 1.
\]

Therefore $\mathcal{K}$ is a contraction on $C_0(R)$. By the Banach Fixed-Point Theorem, $\mathcal{A}$ has a unique fixed point, which is the unique solution to \eqref{eq:Volterra}. Since $x\in C_0(R)$, this solution vanishes on the coordinate axes as well.
\end{proof}

\subsection{The Inverse Problem and Fold Singularities}\label{sec:inverse}

The transformation from the physical coordinates $(x,y)$ to the characteristic coordinates $(s,t)$ is a powerful tool because it converts the nonlinear Monge--Amp\`ere equation into an equivalent linear formulation. In particular, every ``partially convex'' classical solution $w(x,y)$ (i.e., one for which $w_{xx}$ is everywhere positive or everywhere negative) corresponds to a solution $x(s,t)$ of the linear integral equation analyzed above.

This raises a natural question: can we always reverse the process? That is, can any solution $x(s,t)$ of the integral equation be transformed back to recover a classical solution $w(x,y)$? In general the answer is no: the inversion of the hodograph transformation is not always possible. 
The following example shows concretely how this failure can occur.

\begin{example}
[Birth of a Singularity from Weakly Compatible Data]
Consider the case $\lambda(u) = u$, for which the associated linear PDE is the Euler--Poisson--Darboux equation, $u(x_{uu} - x_{pp}) + x_u = 0$.
An exact polynomial solution for this is
\[
x_{\text{poly}}(u,p) = -\tfrac{5 \pi}{26} \left[(3u^4 - 12u^2 + 10)p + 8(u^2-1)p^3 + \tfrac{8}{5}p^5\right],
\]
with derivative with respect to $p$ given by
\[
(x_{\text{poly}})_p(u,p) = -\tfrac{5 \pi}{26} \left[(3u^4 - 12u^2 + 10) + 24(u^2-1)p^2 + 8p^4\right].
\]
Consider now the domain $\Omega_1 = \{(s,t) \mid s,t \ge 0, s+t \ge 1\}$. We pose a Cauchy–Goursat problem with the boundary data defined as follows.
\begin{itemize}
    \item \textbf{Cauchy Data on $u=1$:} For $p \in [-1,1]$, we prescribe boundary data that agree with those of the exact polynomial solution
    \begin{align*}
    \text{Dirichlet data:} \quad  x(1,p) &= -\tfrac{\pi}{26}(5p + 8p^5), \\[0.5em]
    \text{Normal derivative:} \quad  x_u(1,p) &= -\tfrac{10\pi}{13} \, p(4p^2 - 3).
    \end{align*}
  The concavity condition is then satisfied along the Cauchy boundary $u=1$, since 
$(x_{\text{poly}})_p(1,p) = -\tfrac{5\pi}{26}(1 + 8p^4) < 0$ for all $p \in [-1,1]$.
    
    \item \textbf{Goursat Data on $p=\pm u$:} For $u > 1$, we prescribe constant Goursat data, taking the values from the corners of the Cauchy segment:
    \begin{align*}
        x(u,u) &= x_{\text{poly}}(1,1) = -\tfrac{5\pi}{26}\left(1 + \tfrac{8}{5}\right) = -\tfrac{\pi}{2}, \\[0.5em]
        x(u,-u) &= x_{\text{poly}}(1,-1) = \tfrac{5\pi}{26}\left(1 + \tfrac{8}{5}\right) = \tfrac{\pi}{2}.
    \end{align*}
\end{itemize}
This defines a well-posed problem. At the corners where the boundary type changes, for instance, at $(u,p)=(1,1)$, the Dirichlet data is continuous by construction. The derivatives, however, do not match: along the Goursat boundary $p=u$ the tangential derivative vanishes, whereas along the Cauchy segment $u=1$ we have 
$(x_{\text{poly}})_u(1,1) + (x_{\text{poly}})_p(1,1) = -\tfrac{5\pi}{2}$. 

Thus, the data is not $C^1$-compatible. It is, however, weakly compatible, fitting precisely into the framework we have developed for such solutions. The unique solution within the domain of dependence of the Cauchy data, the ``central diamond" $|u-1|+|p-1| \leq 2$,  is given by the polynomial $x(u,p) = x_{\text{poly}}(u,p)$.

We now demonstrate the ``birth'' of a singularity within this region.  
At the interior point $(u^\ast,p^\ast)=(\tfrac{3}{2},0)$, we find $x_p(u^\ast,p^\ast)=\tfrac{145\pi}{416}$. Initially, $x_p<0$ everywhere on the boundary at $u=1$, but inside the domain it evolves to $x_p>0$ at this point. Thus, a fold singularity \cite{brander2017pseudospherical} must form since $x_p$ crosses zero, appearing spontaneously (``out of nowhere") from otherwise regular data. Outside the central diamond the polynomial no longer represents the solution, but the solution itself remains continuous, as guaranteed by Theorem~\ref{thm:existence-uniqueness}.
\end{example}

\section{Energy Estimates and Stability for the Cauchy–Goursat Problem}\label{sec:estimates}

A central tool for analyzing hyperbolic problems is the use of energy identities \cite{EvansbookPDE}, which provide quantitative control of solutions even when singularities are present. In this section, we derive an energy estimate for the Cauchy–Goursat problem in characteristic variables. The resulting identity balances the growth of the solution against dissipation and forcing, and yields stability bounds that are uniform with respect to the geometry of the domain. Beyond clarifying the behavior of the linearized model, these estimates also feed directly into the analysis of the hyperbolic Monge–Ampère equation on the cylinder, where they play a key role in establishing stability under perturbations of the coefficient function.

\subsection{Energy Identity and Estimates}

Let's start by considering the nonhomogeneous problem
\begin{equation}\label{eq:PDEforEstimates}
2\lambda(s+t)x_{st} + \lambda'(s+t)(x_s+x_t) = g(s,t)
\end{equation}
on the domain $\Omega= \{(s,t) \mid s,t \geq 0,\; s+t \geq c\}$,
with $\lambda \in C^1$, $\lambda > 0$, and $\lambda' > 0$ for $s+t>0$.
The solution $x(s,t)$ is assumed to satisfy the homogeneous Goursat boundary conditions
\[
x(s,0)=0 ~~\text{for } s \geq c, \quad x(0,t)=0 ~~\text{for } t \geq c.
\]

\begin{proposition} \label{pro:Energy}
Let $x(s,t)$ be a $C^2$ solution of the problem above. Define the energy along the line $\Gamma_u : s+t=u$ by
\begin{equation}\label{eq:Eu}
E(u) = \int_0^u \lambda(u)\Big((x_s(s,u-s))^2 + (x_t(s,u-s))^2\Big)\,ds.
\end{equation}
Then for any $u>c$, the solution satisfies the energy identity
\begin{align}\label{energyid}
E(u) = E(c) + \int_c^u \int_0^v \Big( g(x_s+x_t) - 2\lambda'(v)x_s x_t \Big) \,ds\,dv,
\end{align}
where the integrands on the right are evaluated at $(s, v-s)$.
\end{proposition}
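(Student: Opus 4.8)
The plan is to convert \eqref{eq:PDEforEstimates} into a divergence identity by an energy multiplier, integrate over the trapezoidal region between $\Gamma_c$ and $\Gamma_u$, and read off $E(u)-E(c)$ from the boundary flux.

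\medskip
\noindent\textbf{Step 1 (energy multiplier).} I would multiply \eqref{eq:PDEforEstimates} by the transverse derivative $x_s+x_t$ — the same combination that enters $E(u)$ — to obtain
\[
2\lambda\, x_{st}(x_s+x_t) + \lambda'(x_s+x_t)^2 = g\,(x_s+x_t).
\]
Using $2x_s x_{st}=\partial_t(x_s^2)$, $2x_t x_{st}=\partial_s(x_t^2)$, together with $\partial_s\lambda=\partial_t\lambda=\lambda'$, the first term rearranges as $\partial_t(\lambda x_s^2)+\partial_s(\lambda x_t^2)-\lambda'(x_s^2+x_t^2)$; since $-\lambda'(x_s^2+x_t^2)+\lambda'(x_s+x_t)^2=2\lambda' x_s x_t$, the whole relation collapses to the conservation form
\[
\partial_t(\lambda x_s^2)+\partial_s(\lambda x_t^2)=g\,(x_s+x_t)-2\lambda' x_s x_t,
\]
i.e.\ $\operatorname{div}F = g(x_s+x_t)-2\lambda' x_s x_t$ with $F=(\lambda x_t^2,\ \lambda x_s^2)$ (its $s$- and $t$-components, respectively).

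\medskip
\noindent\textbf{Step 2 (divergence theorem).} Set $\Omega_u := \Omega\cap\{s+t\le u\}$, the trapezoid whose boundary is $\Gamma_c\cup\Gamma_u$ together with the axis segments $\{t=0,\ c\le s\le u\}$ and $\{s=0,\ c\le t\le u\}$. Since $x\in C^2$, $F\in C^1(\overline{\Omega_u})$, and the divergence theorem yields
\[
\int_{\Omega_u}\!\big(g(x_s+x_t)-2\lambda' x_s x_t\big)\,dA=\oint_{\partial\Omega_u}F\cdot\hat n\,d\ell.
\]
On $\Gamma_u$ the outward unit normal is $\tfrac{1}{\sqrt2}(1,1)$, so $F\cdot\hat n=\tfrac{\lambda}{\sqrt2}(x_s^2+x_t^2)$; parametrizing by $s$ with $d\ell=\sqrt2\,ds$ reproduces exactly $E(u)$, and $\Gamma_c$ (whose outward normal points the opposite way) contributes $-E(c)$. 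On the segment $\{t=0\}$ the outward normal is $(0,-1)$, so $F\cdot\hat n=-\lambda x_s^2$; the homogeneous Goursat datum $x(s,0)=0$ for $s\ge c$ forces $x_s(s,0)=0$ there, so this term vanishes, and symmetrically $x(0,t)=0$ gives $x_t(0,t)=0$, killing the $\{s=0\}$ contribution. Hence $E(u)-E(c)=\int_{\Omega_u}\big(g(x_s+x_t)-2\lambda' x_s x_t\big)\,dA$.

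\medskip
\noindent\textbf{Step 3 (iterated integral).} Finally I would change variables $(s,t)\mapsto(s,v)$ with $v=s+t$ (unit Jacobian; for fixed $v\in[c,u]$, $s$ ranges over $[0,v]$), rewriting the area integral as $\int_c^u\!\int_0^v(\cdot)\big|_{(s,v-s)}\,ds\,dv$ with $\lambda'$ evaluated at $v$, which is precisely \eqref{energyid}.

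\medskip
The computations here are short, so the step demanding genuine care is the boundary bookkeeping in Step 2: one must observe that the flux of $F$ through each coordinate-axis segment involves only the \emph{tangential} derivative there ($x_s$ on $\{t=0\}$, $x_t$ on $\{s=0\}$), which is exactly what the homogeneous Dirichlet Goursat data annihilate — so no corner compatibility at $(c,0)$ or $(0,c)$ is needed — and one must keep track of the opposite orientations of the normals on $\Gamma_c$ and $\Gamma_u$ to get the sign of $E(u)-E(c)$ right. (Differentiating $E(u)$ in $u$ directly also works, but it produces an apparent corner term at $(u,0)$ that cancels only after an integration by parts invoking the PDE, so the divergence-form argument above is the cleaner route.)
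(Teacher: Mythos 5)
Your proof is correct and follows essentially the same route as the paper's: the multiplier $x_s+x_t$, the identical conservation law $\partial_t(\lambda x_s^2)+\partial_s(\lambda x_t^2)+2\lambda'x_sx_t=g(x_s+x_t)$, integration over the trapezoid via the divergence theorem, and the change to the iterated $(s,v)$ integral. Your explicit bookkeeping of the axis fluxes (only the tangential derivatives $x_s$ on $\{t=0\}$ and $x_t$ on $\{s=0\}$ appear, so the homogeneous Goursat data annihilate them) just spells out what the paper asserts in one line.
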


\begin{proof}
The proof proceeds by first deriving a local conservation law and then integrating it over the domain.

\medskip
\noindent \textbf{Step 1: Derivation of the Conservation Law.}
We multiply \eqref{eq:PDEforEstimates} by $(x_s+x_t)$ to get:
\begin{align*}
2\lambda x_{st}(x_s+x_t) + \lambda'(x_s+x_t)^2 & = g(x_s+x_t),
\end{align*}
which can be rewritten as
\begin{align*}
\frac{\partial}{\partial t}(\lambda x_s^2) + \frac{\partial}{\partial s}(\lambda x_t^2) - \lambda'(x_s^2+x_t^2) + \lambda'(x_s+x_t)^2 & = g(x_s+x_t).
\end{align*}
Expanding the squared term and simplifying, we arrive at the conservation law:
\begin{equation} \label{eq:conservation_inhomogeneous}
\frac{\partial}{\partial t}\left(\lambda x_s^2\right) + \frac{\partial}{\partial s}\left(\lambda x_t^2\right) + 2\lambda'x_s x_t = g(x_s+x_t).
\end{equation}

\medskip
\noindent \textbf{Step 2: Integration over the Domain.}
We now integrate \eqref{eq:conservation_inhomogeneous} over the domain $D_u = \{(s',t') \mid c \le s'+t' \le u, s' \ge 0, t' \ge 0\}$. By Green’s theorem, the divergence term contributes boundary integrals. The characteristic sides vanish due to homogeneous Goursat conditions, while the lines $s+t=u$ and $s+t=c$ contribute $E(u)$ and $-E(c)$, respectively.  Thus, the integral of the divergence part is $E(u)-E(c)$.

Equating the integrated parts gives
\[
E(u) - E(c) + \iint_{D_u} 2\lambda'x_s x_t \,ds'dt' = \iint_{D_u} g(x_s+x_t)\,ds'dt'.
\]
Rearranging and writing the integrals in iterated form yields the final identity \eqref{energyid}.
\end{proof}

\begin{corollary}\label{cor:energyestimate}[Energy Estimate for the Solution]
Let $x$ be a solution to the inhomogeneous problem on the domain
\[
\Omega_u = \{(s,\zeta-s) \mid c \leq \zeta \leq u, \; 0 \leq s \leq \zeta \}.
\]
If $\lambda'(\zeta) > 0$ for all $\zeta \geq c$, then the energy satisfies the bound
\begin{equation}\label{eq:Energy_Corrected}
E(u) \leq \frac{\lambda(u)\exp(u-c)}{\lambda(c)}\left(E(c) + \frac{1}{2}\|g\|_{L^2(\Omega_u)}^2\right), \quad u > c.
\end{equation}
where $$\|g\|_{L^2(\Omega_u)}^2 = \int_c^u \int_0^{\zeta} g(s,\zeta-s)^2 \, ds \, d\zeta.$$
\end{corollary}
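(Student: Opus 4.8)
The plan is to convert the energy identity \eqref{energyid} of Proposition~\ref{pro:Energy} into a Grönwall-type integral inequality for $E(u)$ and then integrate it. I would first note that $E(u)\ge 0$ since $\lambda(u)>0$, and that the $u$-dependence of $E$ in \eqref{energyid} sits entirely in the double integral, whose two types of terms I estimate separately. For the sign-indefinite cross term, I would use $2|x_s x_t|\le x_s^2+x_t^2$ together with $\lambda'(v)>0$ to obtain, for each $v\in[c,u]$,
\[
\int_0^v\!\big(-2\lambda'(v)\,x_s x_t\big)\,ds \;\le\; \lambda'(v)\!\int_0^v\!\big(x_s^2+x_t^2\big)\,ds \;=\;\frac{\lambda'(v)}{\lambda(v)}\,E(v),
\]
recognizing the definition \eqref{eq:Eu} of $E(v)$ along $s+t=v$. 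For the forcing term I would apply Young's inequality with a $\lambda$-adapted weight, $g\,(x_s+x_t)\le \tfrac{1}{2\lambda(v)}\,g^2+\tfrac{\lambda(v)}{2}(x_s+x_t)^2\le \tfrac{1}{2\lambda(v)}\,g^2+\lambda(v)\,(x_s^2+x_t^2)$, which yields
\[
\int_0^v g\,(x_s+x_t)\,ds \;\le\; \frac{1}{2\lambda(v)}\int_0^v g(s,v-s)^2\,ds \;+\; E(v).
\]

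Since $\lambda'>0$ forces $\lambda$ to be increasing, $\lambda(v)\ge\lambda(c)>0$ for $v\ge c$, so integrating the forcing estimate over $v\in[c,u]$ bounds its first contribution by $\tfrac{1}{2\lambda(c)}\|g\|_{L^2(\Omega_u)}^2$. Inserting both bounds into \eqref{energyid} gives
\[
E(u)\;\le\; \Phi(u)\;+\;\int_c^u \Big(1+\tfrac{\lambda'(v)}{\lambda(v)}\Big)\,E(v)\,dv,
\qquad \Phi(u):=E(c)+\tfrac{1}{2\lambda(c)}\|g\|_{L^2(\Omega_u)}^2 .
\]
Because $\Phi$ is nondecreasing in $u$ and $\beta(v):=1+\lambda'(v)/\lambda(v)\ge 0$ is continuous on $[c,u]$, I would fix $U>c$ and apply the constant-coefficient Grönwall lemma on $[c,U]$ with the constant $\Phi(U)$ to get $E(U)\le \Phi(U)\exp\!\big(\int_c^U\beta(v)\,dv\big)$. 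The one genuine computation is the antiderivative
\[
\int_c^U\Big(1+\tfrac{\lambda'(v)}{\lambda(v)}\Big)\,dv=(U-c)+\ln\tfrac{\lambda(U)}{\lambda(c)},
\qquad\text{hence}\qquad \exp\!\Big(\int_c^U\beta\Big)=\tfrac{\lambda(U)}{\lambda(c)}\,e^{\,U-c},
\]
which is exactly the prefactor in \eqref{eq:Energy_Corrected}; relabeling $U$ as $u$ completes the argument.

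I expect the main obstacle to be the cross term $-2\lambda'(v)x_s x_t$: it has no definite sign, so it cannot simply be discarded, and absorbing it by AM--GM necessarily brings $E(v)$ back onto the right-hand side, which is precisely what turns the identity into a Grönwall inequality rather than a one-line estimate. Everything else is bookkeeping: the $\lambda$-weighted Young inequality is chosen so that the Grönwall kernel equals exactly $1+\lambda'/\lambda$, whose antiderivative $v+\ln\lambda(v)$ produces the $\lambda(u)e^{\,u-c}/\lambda(c)$ growth; the monotonicity $\lambda(v)\ge\lambda(c)$ is used to pull the constant out in front of the forcing norm; and the monotonicity of $\|g\|_{L^2(\Omega_u)}$ in $u$ permits freezing the Grönwall ``constant'' at the endpoint $U$.
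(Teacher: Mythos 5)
Your argument follows essentially the same route as the paper: start from the identity \eqref{energyid}, bound the cross term by $|2\lambda' x_s x_t|\le \lambda'(x_s^2+x_t^2)$, bound the forcing term by Young's inequality, recognize $E(v)$ via \eqref{eq:Eu}, and close with Gr\"onwall. The one genuine difference is your choice of a $\lambda$-weighted Young inequality, which makes the Gr\"onwall kernel exactly $1+\lambda'/\lambda$ and hence reproduces the prefactor $\lambda(u)e^{u-c}/\lambda(c)$ of \eqref{eq:Energy_Corrected} verbatim; the paper instead uses the unweighted inequality, which keeps the coefficient $\tfrac12$ on $\|g\|^2_{L^2(\Omega_u)}$ but produces the kernel $(1+\lambda'(v))/\lambda(v)$, whose exponential is $\tfrac{\lambda(u)}{\lambda(c)}\exp\bigl(\int_c^u\lambda(v)^{-1}dv\bigr)$ rather than $\tfrac{\lambda(u)}{\lambda(c)}e^{u-c}$. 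The price of your weighting is the factor $\tfrac{1}{2\lambda(c)}$ in front of $\|g\|^2_{L^2(\Omega_u)}$ in place of the stated $\tfrac12$, so your final bound coincides with \eqref{eq:Energy_Corrected} only when $\lambda(c)\ge 1$ and is otherwise a correct estimate of the same form with a slightly different constant. This is not a substantive gap --- each version is an honest Gr\"onwall bound, the discrepancy is a bookkeeping issue shared in mirror image by the paper's own derivation, and in the only downstream use (the linear stability estimate) the entire prefactor is absorbed into a constant $M_1$ --- but you should state explicitly which constant your argument actually delivers. All the individual steps you give (nonnegativity of $E$, the identity $\int_0^v\lambda(v)(x_s^2+x_t^2)\,ds=E(v)$, monotonicity of $\lambda$, freezing the nondecreasing majorant at the endpoint before applying Gr\"onwall, and the antiderivative $v+\ln\lambda(v)$) are correct.
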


\begin{proof}
We start from the energy identity \eqref{energyid}. We bound the terms on the right-hand side using the Cauchy--Schwarz and Young inequalities:
\begin{align*}
\left| g(x_s+x_t) \right| &\leq \frac{1}{2}g^2 + \frac{1}{2}(x_s+x_t)^2 \leq \frac{1}{2}g^2 + x_s^2 + x_t^2, \\[0.5em]
\left| -2\lambda' x_s x_t \right| &\leq \lambda'(x_s^2+x_t^2).
\end{align*}
Substituting these inequalities into \eqref{energyid} yields
\begin{align*}
E(u) &\leq E(c) + \int_c^u \int_0^v \left( \frac{1}{2}g^2 + x_s^2 + x_t^2 \right) ds\,dv + \int_c^u \int_0^v \lambda'(v)(x_s^2+x_t^2)\,ds\,dv \\[0.5em]
&\leq E(c) + \frac{1}{2}\|g\|_{L^2(\Omega_u)}^2 + \int_c^u \int_0^v (1+\lambda'(v))(x_s^2+x_t^2)\,ds\,dv.
\end{align*}
Using $E(v)$ given by \eqref{eq:Eu} we can write the last term as:
\[
E(u) \leq E(c) + \frac{1}{2}\|g\|_{L^2(\Omega_u)}^2 + \int_c^u \frac{1+\lambda'(v)}{\lambda(v)} E(v) \,dv.
\]
Finally, applying Gronwall’s lemma yields the bound \eqref{eq:Energy_Corrected}.
\end{proof}

\subsection{Stability under Perturbations of the Curvature Function}
We next analyze the stability of the solution $x$ with respect to perturbations in the curvature function $\lambda(y)$. In particular, we show that the solution depends continuously on $\lambda$, with the variation controlled by a weighted norm involving the derivative of the relative perturbation. The strategy is to derive the linearized equation for the variation and then apply energy estimates to obtain the desired bounds.

\begin{lemma}[Linearized Equation] \label{le:linearizing} Recall that the governing equation can be written in terms of the operator
\[
L_\lambda[x] := 2\lambda(s+t)x_{st} + \lambda'(s+t)(x_s+x_t),
\]
so that solutions satisfy $L_\lambda[x]=0$.
Let $x_0$ be a solution to the homogeneous Goursat problem corresponding to the coefficient function $\lambda_0$.
Consider a perturbed coefficient of the form
\[
\lambda(y;\epsilon) = \lambda_0(y) + \epsilon \lambda_1(y),
\]
with corresponding solution $x(u,p;\epsilon)$.
Then the first-order variation
\[
x_1 := \left.\frac{\partial}{\partial \epsilon} x(u,p;\epsilon)\right|_{\epsilon=0}
\]
satisfies the linear inhomogeneous equation
\[
L_{\lambda_0}[x_1] = g(u,p).
\]
The forcing term $g$ is given by
\[
g(u,p) = -2 \lambda_0(u) \, \frac{d}{du}\!\big(\delta\log \lambda\big) \, (x_0)_u,\quad \text{with} \quad
\delta\log\lambda := \frac{\epsilon \lambda_1(y(u))}{\lambda_0(y(u))}.
\]
\end{lemma}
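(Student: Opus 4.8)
The plan is to linearize the identity $L_{\lambda(\cdot;\epsilon)}[x(\cdot;\epsilon)] = 0$ in $\epsilon$ about $\epsilon = 0$ and then to simplify the resulting source term using the equation satisfied by the background solution $x_0$. It is cleanest to pass to the $(u,p)$ characteristic coordinates $s = \tfrac12(u+p)$, $t = \tfrac12(u-p)$, for which $s+t = u$, $\partial_s + \partial_t = 2\partial_u$, and $\partial_s\partial_t = \partial_u^2 - \partial_p^2$, so that $L_\lambda[x] = 2\lambda(u)(x_{uu} - x_{pp}) + 2\lambda'(u)\,x_u$. I will use two structural facts: that the family $\epsilon \mapsto x(\cdot;\epsilon)$ is differentiable at $\epsilon = 0$, so that $x_1$ is well defined and the product-rule steps below are legitimate; and that the (homogeneous) Goursat data is $\epsilon$-independent, so that differentiating the boundary relations in $\epsilon$ shows $x_1$ also vanishes on the characteristic boundaries (the hypothesis under which $x_1$ is eligible for the energy identity of Proposition~\ref{pro:Energy}).

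First I would differentiate $2\lambda(u;\epsilon)\,x_{st}(\cdot;\epsilon) + \lambda'(u;\epsilon)\bigl(x_s + x_t\bigr)(\cdot;\epsilon) = 0$ with respect to $\epsilon$ at fixed $(s,t)$. By the product rule, the terms in which $\partial_\epsilon$ falls on $x$ reassemble into $L_{\lambda_0}[x_1]$ (using that $\partial_\epsilon$ commutes with $\partial_s,\partial_t$), while the terms in which $\partial_\epsilon$ falls on the coefficient give $2\dot\lambda(u)\,(x_0)_{st} + \dot\lambda'(u)\bigl((x_0)_s + (x_0)_t\bigr)$, where $\dot\lambda(u) := \partial_\epsilon \lambda(u;\epsilon)\big|_{\epsilon=0}$ and $\dot\lambda' = \partial_u\dot\lambda$. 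Hence $L_{\lambda_0}[x_1] = g$ with $g = -2\dot\lambda\,(x_0)_{st} - \dot\lambda'\bigl((x_0)_s + (x_0)_t\bigr)$.

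To recast $g$ in the announced form I would use the background equation: since $L_{\lambda_0}[x_0] = 0$ and $\lambda_0 > 0$, the mixed derivative can be eliminated via $(x_0)_{st} = -\tfrac{\lambda_0'}{2\lambda_0}\bigl((x_0)_s + (x_0)_t\bigr)$. Substituting and collecting terms gives $g = \bigl(\tfrac{\dot\lambda\,\lambda_0'}{\lambda_0} - \dot\lambda'\bigr)\bigl((x_0)_s + (x_0)_t\bigr)$, and since $\tfrac{\dot\lambda\,\lambda_0'}{\lambda_0} - \dot\lambda' = -\lambda_0\,\tfrac{d}{du}\!\bigl(\tfrac{\dot\lambda}{\lambda_0}\bigr)$ and $(x_0)_s + (x_0)_t = 2(x_0)_u$, this collapses to $g = -2\lambda_0(u)\,\tfrac{d}{du}\!\bigl(\tfrac{\dot\lambda}{\lambda_0}\bigr)(x_0)_u$. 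Finally, $\tfrac{\dot\lambda}{\lambda_0}$ is by definition the first-order relative variation of $\lambda$; identifying it with $\delta\log\lambda$ and expressing this through the prescribed perturbation of the curvature gives $\delta\log\lambda = \epsilon\,\lambda_1(y(u))/\lambda_0(y(u))$, which completes the identification of the forcing.

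The algebraic manipulations are routine; I expect the real care to be needed in two places. The first is the justification that $\epsilon \mapsto x(\cdot;\epsilon)$ is differentiable at $\epsilon = 0$, so that $x_1$ exists and the formal linearization is valid — this should follow from the fixed-point construction of Section~\ref{sec:corrector}, since the solution is the fixed point of an operator that depends smoothly (in the sup norm) on $\lambda$ and contracts uniformly, so standard perturbation theory for fixed points applies. The second, more conceptual, point is that the time-like variable $u = u(y) = \int_y^\infty \lambda$ itself depends on $\lambda$, hence on $\epsilon$; consequently $\dot\lambda$ — the variation of the coefficient \emph{viewed as a function of $u$} — is not literally the prescribed perturbation $\lambda_1$ of the curvature \emph{viewed as a function of $y$}, and reconciling the two (as well as the overall scaling in $\epsilon$) is exactly what is packaged into the relation $\delta\log\lambda = \epsilon\,\lambda_1(y(u))/\lambda_0(y(u))$. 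This is the single step where the dual role of $\lambda$ must be handled with attention to what is held fixed when differentiating.
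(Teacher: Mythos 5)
Your proposal is correct and follows the same route as the paper, whose proof consists only of the remark that one linearizes $L_\lambda[x]=0$ in $\epsilon$ and notes that the fixed boundary data forces homogeneous conditions on $x_1$; you supply the details the paper omits (the product rule in $\epsilon$, the elimination of $(x_0)_{st}$ via $L_{\lambda_0}[x_0]=0$, and the collapse to $-2\lambda_0\,\tfrac{d}{du}(\dot\lambda/\lambda_0)(x_0)_u$), and the algebra checks out. Your closing observation that $u=\int_y^\infty\lambda$ itself depends on $\epsilon$, so that the variation of the coefficient as a function of $u$ is not literally $\epsilon\lambda_1(y(u))$, is a genuine subtlety that the paper's one-line proof does not address at all; flagging it is a point in your favor rather than a defect.
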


\begin{proof}
The proof follows by linearizing the equation $L_\lambda[x]=0$ with respect to $\epsilon$ at $\epsilon=0$.
Since the boundary data for $x$ is fixed, the perturbation $x_1$ satisfies homogeneous boundary and initial conditions.
\end{proof}

We next introduce the function space
\[
X_1 := L^\infty([c,C]; H^1(I_u)),
\quad \text{with } I_u := [-u,u].
\]

To describe the forcing terms, it is convenient to work on the domain
\[
\Omega_{c,C} := \{ (u,p) \mid c \leq u \leq C, \; p \in I_u \}.
\]

In addition, for any nonnegative measurable weight function $\rho(y)$, we define the weighted $L^2$ norm
\[
\|h\|_{L^2(\rho(y)\,dy)}^2 := \int_{y_c}^{y_C} |h(y)|^2 \, \rho(y) \, dy,
\]
where the endpoints $y_c$ and $y_C$ are determined by $u(y_c) = c$ and $u(y_C) = C$.

We now apply the energy functional from Proposition~\ref{pro:Energy} to both the base solution and the first-order variation. 
With what follows, we denote by $E_0(u)$ the energy of the base solution $x_0$, and by $E_1(u)$ the energy of the first-order variation $x_1$.

\begin{proposition}[Linear Stability Estimate]
Let $x_0$ be a $C^2$ solution for a smooth coefficient $\lambda_0(u)$ and let $E_0(u)$ be its corresponding energy. For a perturbation $\delta\lambda$, the energy $E_1(u)$ of the first-order variation $x_1$ satisfies the estimate:
\[
\sup_{u \in [c,C]} E_1(u) \leq M  \left(\sup_{u \in [c,C]} E_0(u)\right)
\left\| \frac{d}{du}\left(\frac{\delta\lambda}{\lambda_0}\right) \right\|_{L^2(\lambda_0(u)du)}^2,
\]
for a constant $M$ that depends on $\lambda_0$ and the interval $[c,C]$ but not on the perturbation $\delta\lambda$.
\end{proposition}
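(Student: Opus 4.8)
The plan is to combine the linearized equation of Lemma~\ref{le:linearizing} with the energy estimate of Corollary~\ref{cor:energyestimate}, applied to the variation $x_1$ in place of the generic solution. By Lemma~\ref{le:linearizing}, $x_1$ solves $L_{\lambda_0}[x_1] = g$ with $g(u,p) = -2\lambda_0(u)\,\tfrac{d}{du}(\delta\log\lambda)\,(x_0)_u$, where $\delta\log\lambda = \delta\lambda/\lambda_0$ depends on $u=s+t$ alone. Since the Cauchy and Goursat data for $x$ are held fixed under the perturbation, $x_1$ satisfies homogeneous Goursat conditions on the characteristic segments and vanishes together with its normal derivative along $\Gamma_c=\{s+t=c\}$; hence $(x_1)_s=(x_1)_t=0$ on $\Gamma_c$, so $E_1(c)=0$. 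Corollary~\ref{cor:energyestimate} then gives, for every $u\in(c,C]$,
\[
E_1(u) \;\le\; \frac{\lambda_0(u)\exp(u-c)}{\lambda_0(c)}\cdot\tfrac12\,\|g\|_{L^2(\Omega_u)}^2 ,
\]
and it remains to bound $\|g\|_{L^2(\Omega_u)}^2$ in terms of $\sup_{[c,C]}E_0$ and the weighted norm of $\tfrac{d}{du}(\delta\lambda/\lambda_0)$.

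The key reduction exploits that the perturbation enters $g$ only through the factor $\tfrac{d}{du}(\delta\log\lambda)$, which is constant along each line $\Gamma_v=\{s+t=v\}$. Using the chain-rule identity $(x_0)_u=\tfrac12\big((x_0)_s+(x_0)_t\big)$ together with $(a+b)^2\le 2(a^2+b^2)$, on $\Gamma_v$ one has $g^2\le 2\lambda_0(v)^2\big(\tfrac{d}{dv}(\delta\log\lambda)\big)^2\big((x_0)_s^2+(x_0)_t^2\big)$. Integrating in $s$ over $[0,v]$ and recognizing $\int_0^v\big((x_0)_s^2+(x_0)_t^2\big)\,ds = E_0(v)/\lambda_0(v)$ from~\eqref{eq:Eu} yields $\int_0^v g(s,v-s)^2\,ds \le 2\lambda_0(v)\big(\tfrac{d}{dv}(\delta\log\lambda)\big)^2 E_0(v)$. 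Integrating this once more over $v\in[c,u]$, bounding $E_0(v)\le\sup_{[c,C]}E_0$ and extending the outer integral to $[c,C]$, and using $\delta\log\lambda=\delta\lambda/\lambda_0$ gives
\[
\|g\|_{L^2(\Omega_u)}^2 \;\le\; 2\Big(\sup_{[c,C]}E_0\Big)\,\Big\|\tfrac{d}{du}\big(\tfrac{\delta\lambda}{\lambda_0}\big)\Big\|_{L^2(\lambda_0(u)\,du)}^2 .
\]

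Substituting this into the energy bound, the constants $\tfrac12$ and $2$ cancel, and taking the supremum over $u\in[c,C]$ (the value $u=c$ being trivial since $E_1(c)=0$) produces the stated inequality with $M = \exp(C-c)\,\big(\sup_{[c,C]}\lambda_0\big)/\lambda_0(c)$, a constant depending only on $\lambda_0$ and the interval $[c,C]$. The steps requiring care, rather than any substantive difficulty, are: confirming that $x_1$ is a $C^2$ solution so that Proposition~\ref{pro:Energy} and Corollary~\ref{cor:energyestimate} genuinely apply (this follows from the $C^2$ regularity of $x_0$, the smoothness and positivity of $\lambda_0$, and the Goursat well-posedness established earlier); checking that the homogeneity of the data forces the stronger statement $E_1(c)=0$ and not merely $x_1|_{\Gamma_c}=0$; and maintaining consistency of the weight $\lambda_0(u)$ between the integrated forcing estimate and the definition of the weighted norm. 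I expect this bookkeeping to be the main thing to get right.
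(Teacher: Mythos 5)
Your proposal is correct and follows essentially the same two-step route as the paper: apply Corollary~\ref{cor:energyestimate} to $L_{\lambda_0}[x_1]=g$ with $E_1(c)=0$, then bound $\|g\|_{L^2(\Omega_C)}^2$ via the identity $(x_0)_u=\tfrac12\big((x_0)_s+(x_0)_t\big)$, Cauchy--Schwarz, and the recognition of $E_0(v)/\lambda_0(v)$ as the inner integral. Your explicit constant $M=\exp(C-c)\,\big(\sup_{[c,C]}\lambda_0\big)/\lambda_0(c)$ and the bookkeeping remarks are consistent with the paper's argument.
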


\begin{proof}
The proof proceeds in two steps. First, we apply the energy estimate to the solution of the linearized equation, and then we bound the resulting forcing term.

\medskip
\noindent {\bf Step 1: The Energy Estimate for $x_1$.}
From Lemma \ref{le:linearizing}, $x_1$ solves the inhomogeneous problem $L_{\lambda_0}[x_1] = g$ with zero initial data, so $E_1(c)=0$. The energy estimate from Corollary \ref{cor:energyestimate}, applied over the finite interval $[c,C]$, guarantees that the solution's energy is controlled by the $L^2$ norm of the forcing term:
\[
\sup_{u \in [c,C]} E_1(u) \leq M_1 \|g\|_{L^2(\Omega_C)}^2,
\]
where $\Omega_C = \{(s,v-s) \mid c \le v \le C, 0 \le s \le v\}$ and the constant $M_1$ absorbs the bounded exponential factor from \eqref{eq:Energy_Corrected}.

\medskip
\noindent {\bf Step 2: Bounding the Forcing Term.}
We now bound the $L^2$ norm of $g$ using its definition from Lemma~\ref{le:linearizing}:
\begin{align*}
\|g\|_{L^2(\Omega_C)}^2
= \int_c^C \int_0^v \left[ -\lambda_0(v) \frac{d}{dv}(\delta\log\lambda) \Big((x_0)_s + (x_0)_t\Big) \right]^2 ds\,dv \\[0.5em]
= \int_c^C \left(\lambda_0(v) \frac{d}{dv}(\delta\log\lambda)\right)^2
   \left( \int_0^v \Big((x_0)_s(s,v-s) + (x_0)_t(s,v-s)\Big)^2 \, ds \right) dv.
\end{align*}
We bound the inner integral with the Cauchy--Schwarz inequality:
\[
\int_0^v \Big((x_0)_s + (x_0)_t\Big)^2 ds \le 2 \int_0^v \Big((x_0)_s^2 + (x_0)_t^2\Big) ds = \frac{2}{\lambda_0(v)} E_0(v).
\]
Substituting this back and pulling out the supremum of the base solution's energy gives
\begin{align*}
\|g\|_{L^2(\Omega_C)}^2 &\le \int_c^C \left(\lambda_0(v) \frac{d}{dv}(\delta\log\lambda)\right)^2 \frac{2 E_0(v)}{\lambda_0(v)} dv \\[0.5em]
&\le 2 \left(\sup_{w \in [c,C]} E_0(w)\right)  \int_c^C \lambda_0(v) \left(\frac{d}{dv}(\delta\log\lambda)\right)^2 dv.
\end{align*}
The last term is precisely the squared norm $\left\| \frac{d}{du}(\delta\log\lambda) \right\|_{L^2(\lambda_0(u)du)}^2$.

Combining the inequalities gives the final estimate with $M=2M_1$.
\end{proof}

\begin{remark}[Invariance and Orthogonal Perturbations]
The governing equation is invariant under a uniform scaling of the coefficient, $\lambda \mapsto k\lambda$.
Our stability estimate reflects this correctly: if $\delta\log\lambda$ is constant, then its derivative vanishes, the forcing term $g$ is zero, and the predicted variation $x_1$ disappears. It is therefore natural to decompose any perturbation into its mean and a mean-free part,
\[
\delta\log\lambda = \overline{\delta\log\lambda} + (\delta\log\lambda)_{\text{ortho}}.
\]
Since the mean contributes nothing, the estimate can be expressed in terms of the orthogonal component:
\[
\|x_1\|_{X_1} \;\leq\; M \|x_0\|_{X_1} 
\left\| \frac{d}{dy}\bigl((\delta\log\lambda)_{\text{ortho}}\bigr) \right\|_{L^2(\lambda_0(y)dy)}.
\]
This makes explicit that stability of the solution depends only on changes to the ``shape" of $\log\lambda(y)$, not on uniform rescaling.
\end{remark}

\section{Conclusions and Future Directions}\label{sec:future}
The study of surfaces with negative curvature has revealed fundamental connections between geometry, analysis, and physical models of thin elastic sheets. In this work, we focused on the hyperbolic Monge--Ampère equation in strip-like geometries, motivated both by its geometric role as the prescribed Gauss curvature equation and by its analytic formulation of localized negative curvature that decays at infinity.  

A particularly interesting feature of our problem is that the hodograph transformation reveals a fundamental and irreversible \textit{``arrow of time''}, in contrast to the standard undamped wave equation, whose dynamics are time-reversible. This asymmetry is first dictated at the geometric level: if one attempts to prescribe Cauchy data on a ``near'' boundary (i.e., for large $u=s+t=C$) together with Goursat data on the axes, characteristics intersect the boundary at two distinct points, producing over-determined and inconsistent data. Hence, the geometry forces the problem to be posed with Cauchy data in the ``far field'' (i.e., for small $u=s+t=c$), where the boundary uniquely determines the causal past of each point.  

This geometric necessity is then confirmed by the energy analysis. The physical decay of curvature requires positive damping ($\lambda'(u) > 0$) in the hodograph PDE, so that evolution away from the far-field boundary is a stable, forward-in-time process. In contrast, solving in the reverse direction is ill-posed, with the energy identity showing exponential amplification of errors. Together, these arguments provide a two-fold justification in both the geometric and analytic  directions of the problem. In addition, they motivate the use of hodograph weak solutions, for which we established well-posedness via a parametrix-corrector method and a detailed analysis of the associated Volterra integral equation.

The results obtained in this article suggest several possible directions for future work:
\begin{itemize}
    \item \textbf{Criteria for Fold-Free Solutions and Stability.} A crucial next step is to find sufficient conditions on the Cauchy-Goursat data $\{g,h,f,n\}$ and the coefficient $\lambda(u)$ that guarantee a fold-free solution, defining the precise parameter space for physically realizable smooth surfaces. A related goal is to develop a nonlinear stability theory, showing that the property of being fold-free is preserved under small perturbations.

    \item \textbf{Sharper Regularity Theory.} A complementary direction is to sharpen the regularity theory for these solutions. This would involve investigating how stronger $H^k$-compatibility conditions on the boundary data can be propagated into the domain to obtain higher-order energy estimates and, under additional assumptions on the data, establish the existence of smooth $C^k$ solutions.

    \item \textbf{Self-Similar Solutions and Asymptotic States.} The semi-infinite cylinder case invites a search for self-similar solutions, which often act as attractors for a large class of initial data in hyperbolic systems. Finding such solutions and proving their stability would be a major step in understanding the universal, long-term (``large $y$") asymptotic geometry of these surfaces.

    \item \textbf{Numerical Implementation.} The parametrix-corrector decomposition is not just an analytical tool; it provides a powerful blueprint for numerical computation. Developing a numerical scheme based on this decomposition would allow for the efficient and accurate computation of solutions, as the parametrix analytically resolves the corner singularity, leaving a more regular problem for the corrector that is amenable to standard numerical methods.

    \item \textbf{Broader Generalizations.} Further extensions include allowing the curvature function $\lambda(x,y)$ to depend on both spatial variables, which would broaden the class of admissible geometries while posing new analytic challenges. Another avenue is the study of homogenization problems, where rapidly oscillating coefficients in $\lambda$ could lead to effective large-scale behavior with both theoretical and applied significance.

    \item \textbf{The Rigidity-Flexibility Transition.} Perhaps the most significant direction is to investigate the transition from the rigid to the flexible regime. This would involve studying whether solutions, when ``pushed", develop the branch point defects that characterize the intricately wrinkled surfaces seen in nature. Such an analysis would provide a direct, unifying link between the two sides of the rigidity-flexibility dichotomy.
\end{itemize}

\section*{Acknowledgments}

SCV was supported in part by NSF awards DMS-2108124 and DMS- 2511503. SCV gratefully acknowledges John Gemmer for valuable discussions.

\end{document}